\newtheorem{theorem}{Theorem}[section]
\newtheorem{lemma}[theorem]{Lemma}
\newtheorem{corollary}[theorem]{Corollary}
\newtheorem{question}[theorem]{Question}
\theoremstyle{definition}
\newtheorem{definition}[theorem]{Definition}
\newtheorem{proposition}[theorem]{Proposition}
\theoremstyle{remark}
\begin{document}

\title[Topologically subordered rectifiable spaces and compactifications]
{Topologically subordered rectifiable spaces and compactifications}

\author{Fucai Lin}
\address{(Fucai Lin): Department of Mathematics and Information Science,
Zhangzhou Normal University, Zhangzhou 363000, P. R. China}
\email{linfucai2008@yahoo.com.cn}

\thanks{Supported by the NSFC (No. 10971185) and the Educational Department of Fujian Province (No. JA09166) of China.}

\keywords{rectifiable spaces; orderable spaces; suborderable spaces; remainders; compactifications; P-spaces; totally disconnected; quasi-$G_{\delta}$-diagonal; $k$-networks; Ohio-complete.}
\subjclass[2000]{54A25; 54B05; 54E20; 54E35}

\begin{abstract}
A topological space $G$ is said to be a {\it rectifiable space}
provided that there are a surjective homeomorphism $\varphi :G\times
G\rightarrow G\times G$ and an element $e\in G$ such that
$\pi_{1}\circ \varphi =\pi_{1}$ and for every $x\in G$ we have
$\varphi (x, x)=(x, e)$, where $\pi_{1}: G\times G\rightarrow G$ is
the projection to the first coordinate. In this paper, we mainly discuss the rectifiable spaces which are suborderable, and show that if a rectifiable space is suborderable then it is metrizable or a totally disconnected P-space, which improves a theorem of A.V. Arhangel'ski\v\i\ in \cite{A20092}. As an application, we discuss the remainders of the Hausdorff compactifications of GO-spaces which are rectifiable, and we mainly concerned with the following statement, and
under what condition $\Phi$ it is true.

\medskip
{\bf Statement} Suppose that $G$ is a non-locally compact
GO-space which is rectifiable, and that $Y=bG\setminus G$ has (locally) a
property-$\Phi$. Then $G$ and $bG$ are separable and metrizable.

Moreover, we also consieder some related matters about the remainders of the Hausdorff compactifications of rectifiable spaces.
\end{abstract}

\maketitle

\section{Introduction}
Recall that a {\it topological group} $G$ is a group $G$ with a
(Hausdorff) topology such that the product maps of $G \times G$ into
$G$ is jointly continuous and the inverse map of $G$ onto itself
associating $x^{-1}$ with arbitrary $x\in G$ is continuous. A {\it
paratopological group} $G$ is a group $G$ with a topology such that
the product maps of $G \times G$ into $G$ is jointly continuous. A
topological space $G$ is said to be a {\it rectifiable space}
provided that there are a surjective homeomorphism $\varphi :G\times
G\rightarrow G\times G$ and an element $e\in G$ such that
$\pi_{1}\circ \varphi =\pi_{1}$ and for every $x\in G$ we have
$\varphi (x, x)=(x, e)$, where $\pi_{1}: G\times G\rightarrow G$ is
the projection to the first coordinate. If $G$ is a rectifiable
space, then $\varphi$ is called a {\it rectification} on $G$. It is
well known that rectifiable spaces and paratopological groups are
all good generalizations of topological groups. In fact, for a
topological group with the neutral element $e$, then it is easy to
see that the map $\varphi (x, y)=(x, x^{-1}y)$ is a rectification on
$G$. However, there exists a paratopological group which is not a
rectifiable space; Sorgenfrey line (\cite[Example
1.2.2]{E}) is such an example. Also, the 7-dimensional sphere $S_{7}$ is
rectifiable but not a topological group \cite[$\S$ 3]{V1990}.
Further, it is easy to see that paratopological groups and
rectifiable spaces are all homogeneous. Recently, the study of rectifiable spaces has become an interesting topic in topological algebra, see \cite{A20092, A2009, G1996, LLL, LF}.

By a remainder of a Tychonoff space $X$ we understand the subspace
$bX\setminus X$ of a Hausdorff compactification $bX$ of $X$.
\bigskip

\section{Preliminaries}
Let ($X, \leq$) be a linearly ordered set. Then, a {\it linearly ordered topological
space} (abbreviated LOTS) is a triple ($X, \tau(\leq), \leq$ ), where $\tau(\leq)$ is the usual
order topology (i.e., open-interval topology) of the order $\leq$. Moreover, a space $X$ is a {\it generalized ordered space} (abbreviated GO-space) if $X$ is a subspace of a LOTS $\mathrm{Y}$ , where the order of $X$ is the one induced by the
order of $Y$, see \cite{LD}.

Recall that a space $X$ is {\it orderable} (resp. {\it suborderable}) if $X$ is homeomorphic to a
LOTS (resp. GO-space) \cite{NJ}. Thus, a space $X$ is orderable if and only if the topology of
$X$ coincides with the order topology by some linear order of $X$ \cite{VM}. The Sorgenfrey line \cite{E} is a suborderable space which is not orderable. It is well known that every compact or connected subspace of a suborderable space is orderable, see \cite{E}.

If it will cause no confusion, for a GO-space (or LOTS) ($X, \tau, \leq$ ), we shall omit
``$\tau$'' or $``\leq$''. Moreover, we shall sometimes use ``LOTS'' (resp. ``GO-spaces'')
instead of ``orderable spaces'' (resp. ``suborderable spaces'').

\begin{definition}\cite{A2008}
Let $\zeta$ and $\eta$ be any families of non-empty subsets of $X$.
\begin{enumerate}
\item The family $\zeta$ is called a {\it filterbase} on a space $X$ if, whenever $P_{1}$
and $P_{2}$ are in $\zeta$, there is a $P\in\zeta$ such that
$P\subset P_{1}\cap P_{2}$;

\item A filterbase $\zeta$ on a space $X$ is
said to {\it converge to a point} $x\in X$ if every open
neighborhood of $x$ contains an element of $\zeta$;

\item If $x\in X$
belongs to the closure of every element of a filterbase $\zeta$ on
$X$, we say that $\zeta$ {\it accumulates to} $x$ or {\it a cluster
point for} $\zeta$;

\item Two filterbases $\zeta$ and $\eta$ are called to
be {\it synchronous} if, for any $P\in\zeta$ and any $Q\in\eta$,
$P\cap Q\neq\emptyset$;

\item A space $X$ is called {\it bisequential}
 if, for every filterbase $\zeta$ on $X$ accumulating to
a point $x\in X$, there exists a countable filterbase $\xi$ on $X$
converging to the same point $x$ such that $\zeta$ and $\xi$ are
synchronous.
\end{enumerate}
\end{definition}

A filterbase $\mathscr{U}$ in a topological space $X$ is called an {\it open nest} \cite{A1990} if $\mathscr{U}$ consists of open subsets of $X$ and has the following property: For any $U, V\in\mathscr{U}$ either $U\subset V$ or $V\subset U.$ A space $X$ is {\it open $\pi$-nested} at a point $x\in X$ \cite{A1990} if there exists an open nest in $X$ converging to $x$, and $X$ is {\it open nested} at $x$ \cite{A1990} if there exists an open nest in $X$ which forms a local base for $X$ at $x$. Finally, a space is {\it open nested} if it is open nested at each of its points.

Recall that a family $\mathcal{U}$ of non-empty open sets of a space
$X$ is called a {\it $\pi$-base} if for each non-empty open set $V$
of $X$, there exists a $U\in\mathcal{U}$ such that $U\subset V$. The
{\it $\pi$-character} of $x$ in $X$ is defined by $\pi\chi(x,
X)=\mbox{min}\{|\mathcal{U}|:\mathcal{U}\ \mbox{is a local}\
\pi\mbox{-base at}\ x\ \mbox{in}\ X\}$. The {\it $\pi$-character of
$X$} is defined by $\pi\chi(X)=\mbox{sup}\{\pi\chi(x, X):x\in
X\}$. The
{\it character} of $x$ in $X$ is defined by $\chi(x,
X)=\mbox{min}\{|\mathcal{U}|:\mathcal{U}\ \mbox{is a local}\
\mbox{base at}\ x\ \mbox{in}\ X\}$. The {\it character of
$X$} is defined by $\chi(X)=\mbox{sup}\{\chi(x, X):x\in
X\}$.

A space $X$ is {\it countably compact} if for each countably open cover of $X$ has a finite subcover. A space $X$ is {\it locally compact} if every point of $G$ has a compact neighborhood. A Tychonoff space $X$ is {\it pseudocompact} if every real-valued continuous function on $X$ is bounded. A space $X$ is {\it $P$-space} if each $G_{\delta}$-set of $X$ is open in $X$.

\begin{theorem}\cite{C1992, G1996, V1989}\label{t9}
A topological space $G$ is rectifiable if and only if there exists $e\in G$ and two
continuous maps $p: G^{2}\rightarrow G$, $q: G^{2}\rightarrow G$
such that for any $x\in G, y\in G$ the next
identities hold:
$$p(x, q(x, y))=q(x, p(x, y))=y\ \mbox{and}\ q(x, x)=e.$$
\end{theorem}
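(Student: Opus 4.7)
The statement is essentially a reformulation of the definition of a rectifiable space, so the strategy is to directly unwind the coordinate condition $\pi_{1}\circ\varphi=\pi_{1}$ in both directions.

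For the forward direction, assume $G$ is rectifiable with rectification $\varphi$ and distinguished point $e$. Because $\pi_{1}\circ\varphi=\pi_{1}$, the homeomorphism $\varphi$ must have the form $\varphi(x,y)=(x,q(x,y))$ for a uniquely determined continuous map $q:G^{2}\to G$, namely $q=\pi_{2}\circ\varphi$. The identity $\varphi(x,x)=(x,e)$ then immediately yields $q(x,x)=e$. To obtain $p$, I would turn to $\varphi^{-1}$. The first step is to verify that $\pi_{1}\circ\varphi^{-1}=\pi_{1}$ as well: if $\varphi(x,y)=(x,q(x,y))=(a,b)$, then necessarily $x=a$, so $\varphi^{-1}(a,b)$ must have first coordinate $a$. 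Hence $\varphi^{-1}$ has the analogous form $\varphi^{-1}(x,y)=(x,p(x,y))$ with $p=\pi_{2}\circ\varphi^{-1}$ continuous. Applying $\varphi\circ\varphi^{-1}=\mathrm{id}$ and $\varphi^{-1}\circ\varphi=\mathrm{id}$ to an arbitrary point $(x,y)$ then produces exactly the two identities $p(x,q(x,y))=y$ and $q(x,p(x,y))=y$.

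For the converse, suppose $e\in G$ together with continuous $p,q:G^{2}\to G$ satisfying the stated identities are given. Define $\varphi(x,y):=(x,q(x,y))$ and $\psi(x,y):=(x,p(x,y))$. Both are continuous maps $G^{2}\to G^{2}$, both satisfy the projection condition $\pi_{1}\circ\varphi=\pi_{1}=\pi_{1}\circ\psi$, and the two algebraic identities translate precisely to $\psi\circ\varphi=\mathrm{id}_{G^{2}}=\varphi\circ\psi$. So $\varphi$ is a homeomorphism with inverse $\psi$, and $q(x,x)=e$ gives $\varphi(x,x)=(x,e)$. Thus $\varphi$ is a rectification on $G$ witnessing that $G$ is rectifiable.

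There is no serious technical obstacle here; the only point requiring slight care is the observation that the coordinate-preservation property of $\varphi$ automatically transfers to $\varphi^{-1}$, which is exactly what allows the second map $p$ to be manufactured from the inverse homeomorphism. Everything else is formal verification.
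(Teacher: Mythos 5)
Your proof is correct, and it follows exactly the construction the paper itself indicates immediately after the statement (the paper cites the result rather than proving it, but notes that $q=\pi_{2}\circ\varphi$ and $p=\pi_{2}\circ\varphi^{-1}$ yield the required maps). Your one substantive addition --- verifying that bijectivity of $\varphi$ together with $\pi_{1}\circ\varphi=\pi_{1}$ forces $\pi_{1}\circ\varphi^{-1}=\pi_{1}$, so that $p$ is well defined fiberwise --- is precisely the point the paper leaves implicit, and you handle it correctly.
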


Given a rectification $\varphi$ of the space $G$, we may obtain the mappings $p$ and $q$ in Theorem~\ref{t9} as follows. Let $p=\pi_{2}\circ\varphi^{-1}$ and
$q=\pi_{2}\circ\varphi$. Then the mappings $p$ and $q$ satisfy the identities in
Theorem~\ref{t9}, and both are open mappings.

Let $G$ be a rectifiable space, and let $p$ be the multiplication on
$G$. Further, we sometimes write $x\cdot y$ instead of $p(x, y)$ and
$A\cdot B$ instead of $p(A, B)$ for any $A, B\subset G$. Therefore,
$q(x, y)$ is an element such that $x\cdot q(x, y)=y$; since $x\cdot
e=x\cdot q(x, x)=x$ and $x\cdot q(x, e)=e$, it follows that $e$ is a right neutral
element for $G$ and $q(x, e)$ is a right inverse for $x$. Hence a
rectifiable space $G$ is a topological algebraic system with
operations $p$ and $q$, a 0-ary operation $e$, and identities as above. It is
easy to see that this algebraic system need not to satisfy the
associative law about the multiplication operation $p$. Clearly,
every topological loop is rectifiable.

In section 3, we mainly discuss the rectifiable spaces which are suborderable, and show that if a rectifiable space is suborderable then it is metrizable or a totally disconnected P-space, which generalizes a theorem of A.V. Arhangel'ski\v\i\ in \cite{A20092}. Moreover, we discuss the orderability of rectifiable spaces which are suborderable. In section 4, we mainly discuss the remainders of the rectifiable spaces, and in particular, we discuss the remainders of the rectifiable spaces which are suborderable.

All spaces are $T_1$ and regular unless stated otherwise.
The notation $\mathbb{N}$ denotes the set of all positive natural numbers. The letter $e$
denotes the neutral element of a group and the right neutral element
of a rectifiable space, respectively. Readers may refer to
\cite{A2008, E, Gr1984} for notations and terminology not
explicitly given here.

\bigskip
\section{suborderable rectifiable spaces}
In \cite{A20092}, A.V. Arhangel'ski\v\i\ proved the following theorem.

\begin{theorem}\cite{A20092}\label{t4}
If $G$ is an orderable rectifiable space, then $G$ is metrizable or P-space.
\end{theorem}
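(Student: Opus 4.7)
\medskip

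\noindent\textbf{Proof plan.} The strategy is a dichotomy: assuming $G$ is orderable and rectifiable, I will show that if $G$ is not a $P$-space then $G$ is in fact first countable, after which a known metrization theorem for first-countable rectifiable spaces closes the argument.

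Suppose $G$ fails to be a $P$-space. Since rectifiable spaces are homogeneous, every point of $G$ fails to be a $P$-point; fix $x\in G$ together with a decreasing sequence $\{U_{n}\}_{n\in\mathbb{N}}$ of open neighborhoods of $x$ whose intersection is not a neighborhood of $x$. Exploiting the LOTS structure, I may shrink each $U_{n}$ to an open interval $(a_{n},b_{n})\ni x$. If both $\sup_{n}a_{n}<x$ and $\inf_{n}b_{n}>x$ held, then $(\sup_{n}a_{n},\inf_{n}b_{n})\subset\bigcap_{n}U_{n}$ would be a neighborhood of $x$, a contradiction. Hence at least one of the endpoint sequences, say $a_{n}$, converges monotonically to $x$. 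The family $\{(a_{n},b_{1})\}_{n\in\mathbb{N}}$ is then a countable local $\pi$-base at $x$, so $\pi\chi(x,G)\leq\aleph_{0}$.

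The decisive step is to upgrade countable $\pi$-character at $x$ to countable character at $x$. For this I would appeal to the rectifiable-space analogue of Shapirovski\u\i's classical theorem for topological groups: in any rectifiable space $G$ one has $\chi(y,G)=\pi\chi(y,G)$ at every $y\in G$. The proof uses the continuous open maps $p,q:G\times G\to G$ supplied by Theorem~\ref{t9} to translate a local $\pi$-base at $x$ into a genuine local base at $x$. Combined with the homogeneity of $G$ this yields $\chi(G)\leq\aleph_{0}$, i.e.\ $G$ is first countable; a theorem of Gul'ko \cite{G1996} then asserts that every first-countable rectifiable space is metrizable, completing the proof.

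The main obstacle is precisely the identity $\chi=\pi\chi$ in the rectifiable setting. For topological groups one simply translates a $\pi$-base at the neutral element via the (associative) multiplication, but in the rectifiable case the identities $p(x,q(x,y))=q(x,p(x,y))=y$ are only ``near-associative'', so the translation argument must be set up with genuine care in the way $p$ and $q$ interact. A fallback is a direct Birkhoff--Kakutani-style construction of a compatible left-invariant pseudometric from the countable family extracted above, which still rests on the openness of $p$ and $q$.
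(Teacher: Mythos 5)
Your proposal is correct in substance and takes a genuinely different route from the paper. The paper never argues Theorem~\ref{t4} directly; it derives the stronger GO-space version (Theorem~\ref{t0}) from nest machinery: every GO-space is biradial, hence open $\pi$-nested at every point (Lemma~\ref{l1}); Lemma~\ref{l0} uses the rectification map $q$ to upgrade an open $\pi$-nest converging to $e$ to an open nest forming a local base; Lemma~\ref{l2} then yields the dichotomy ``first countable or P-space'', and metrizability follows via bisequentiality (Lemma~\ref{l4}). (A second route, via equality of the cofinalities of $e$ from above and below --- Lemma~\ref{l5}, Theorem~\ref{t3} and Lemma~\ref{l7} --- is recorded in the Remark after Theorem~\ref{t5}.) You instead negate the P-space property, extract a countable local $\pi$-base at a point by a hands-on order argument, and close with Gul'ko. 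Note that the step you flag as the ``main obstacle'' is not actually an obstacle: the needed statement --- a rectifiable space of countable $\pi$-character is metrizable --- is exactly Gul'ko's theorem in \cite{G1996}, which this paper itself invokes verbatim just before the proposition following Theorem~\ref{t11}; so you may cite it and skip any $\chi=\pi\chi$ upgrade or Birkhoff--Kakutani fallback. Your approach is more elementary and self-contained modulo \cite{G1996} (it avoids the biradial apparatus of \cite{A1990}); the paper's approach buys the stronger suborderable/GO-space version, which your interval argument would also give with half-open intervals, and the clopen $p,q$ stable nest structure reused later in the paper.

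Two repairs are needed in your extraction step, both local. First, in an arbitrary linear order $\sup_{n}a_{n}$ and $\inf_{n}b_{n}$ need not exist; phrase the dichotomy as: either there are $c<x<d$ with $a_{n}\leq c$ and $d\leq b_{n}$ for all $n$ --- in which case $(c,d)$ is a neighborhood of $x$ inside $\bigcap_{n}(a_{n},b_{n})\subset\bigcap_{n}U_{n}$, a contradiction --- or (say) for every $c<x$ there is $n$ with $c<a_{n}<x$. Second, in the latter case your family $\{(a_{n},b_{1})\}_{n}$ is \emph{not} a local $\pi$-base at $x$: a neighborhood $(c,d)$ of $x$ with $d<b_{1}$ contains no member of it, since every member reaches up to $b_{1}$. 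Replace it by $\{(a_{n},x)\}_{n}$: these sets are non-empty because in this case $x$ is not isolated from below (an immediate predecessor $x^{-}$ would give $c=x^{-}$ with no $a_{n}\in(c,x)$), and any neighborhood of $x$ contains some $(c,d)$ with $c<x$, hence contains $(a_{n},x)$ as soon as $a_{n}>c$. With these two fixes, and with Gul'ko's theorem cited in place of your sketched $\chi=\pi\chi$ argument, your proof is complete for the orderable case stated.
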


In this section, we shall generalize this theorem, see Theorem~\ref{t2}. Firstly, we give some lemmas.

\begin{lemma}\label{l0}
If a rectifiable space $G$ is open $\pi$-nested at some point $a\in G$ then $G$ is open nested.
\end{lemma}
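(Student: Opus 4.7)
The plan is to use the homogeneity of the rectifiable space $G$, together with the identity $q(x,x)=e$ and the openness of the map $q$, to turn the given open $\pi$-nest at $a$ into an open nest that is a full local base at every point of $G$.

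First I would reduce to the case $a=e$. By Theorem~\ref{t9}, the maps $p(x,\cdot)$ and $q(x,\cdot)$ are mutually inverse homeomorphisms of $G$ for every $x\in G$, so $q(a,\cdot)$ is a self-homeomorphism with $q(a,a)=e$; applying it to the given nest produces an open nest converging to $e$, and since being open $\pi$-nested is preserved by homeomorphisms, it suffices to prove the lemma under the assumption $a=e$.

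Next, given an open nest $\mathscr{U}=\{U_\alpha\}$ converging to $e$, I would set $V_\alpha:=q(U_\alpha\times U_\alpha)$. Each $V_\alpha$ is open (because $q$ is an open mapping) and contains $e$ (because $q(x,x)=e$ for any $x\in U_\alpha$, and $U_\alpha\neq\emptyset$); and since $U_\alpha\subseteq U_\beta$ forces $V_\alpha\subseteq V_\beta$, the family $\{V_\alpha\}$ inherits the nest property from $\mathscr{U}$. To see that it is a local base at $e$, fix an open $W\ni e$; continuity of $q$ at $(e,e)$ yields an open $O\ni e$ with $q(O\times O)\subset W$, and the convergence of $\mathscr{U}$ to $e$ provides $\alpha$ with $U_\alpha\subset O$, whence $V_\alpha\subset q(O\times O)\subset W$. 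Thus $G$ is open nested at $e$. For an arbitrary $b\in G$ the homeomorphism $p(b,\cdot)$ sends $e$ to $b$ and transports $\{V_\alpha\}$ to an open nest which is a local base at $b$: any open $W\ni b$ pulls back through this homeomorphism to the open neighborhood $q(b,W)\ni e$, which must contain some $V_\alpha$, so $p(b,V_\alpha)\subset W$.

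The main obstacle is spotting the middle-step construction: the elements of the original $\pi$-nest need not contain $e$, and one has to thicken them into honest neighborhoods of $e$ while preserving the nest structure. The choice $V_\alpha=q(U_\alpha\times U_\alpha)$ is precisely what the identity $q(x,x)=e$ together with the openness of $q$ is tailor-made to support; once it is found, the remaining homogeneity and continuity checks are routine.
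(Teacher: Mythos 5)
Your proof is correct and takes essentially the same route as the paper: the identical key construction $\{q(U,U): U\in\mathscr{U}\}$ thickening the $\pi$-nest into a nest of genuine open neighborhoods of $e$ (via $q(x,x)=e$ and openness of $q$), the same continuity-of-$q$-at-$(e,e)$ argument for showing this nest is a local base at $e$, and the same reduction to $a=e$ and transport to arbitrary points by homogeneity. Your explicit checks---that $U_\alpha\subseteq U_\beta$ forces $V_\alpha\subseteq V_\beta$, and the use of the mutually inverse homeomorphisms $p(b,\cdot)$ and $q(b,\cdot)$---merely spell out details the paper delegates to the phrase ``by the homogeneity of the space $G$.''
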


\begin{proof}
Since $G$ is homogeneous, we may assume that $a$ is the right neutral element $e$ of the rectifiable space $G$. Take an open nest $\mathscr{A}$ converges to $e$. Put $\phi=\{q(U, U): U\in\mathscr{A}\}$. Since the mappings $\pi_{2}$ and $\varphi$ are open, the mapping $q$ is open. Moreover, it is easy to see that, for each $x\in G$, $q(x, x)=\pi_{2}\circ\phi(x, x)=\pi_{2}(x, e)=e$. Therefore, for each $U\in\mathscr{A}$, $q(U, U)$ is an open neighborhood of $e$.  Then $\phi$ is an open nest. Next, we only prove that $\phi$ converges to $e$. Indeed, let $V$ be an open neighborhood of $e$ in $G$. Since $q$ is continuous at point $(e, e)$, there exists open neighborhood $W$ of $e$ such that $q(W, W)\subset V$. Then there is an $U\in\mathscr{A}$ such that $U\subset W$ since $\mathscr{A}$ converges to $e$. Thus we have $q(U, U)\subset q(W, W)\subset V$, which  implies that $\phi$ is a base for $G$ at the point $e$. By the homogeneity of the space $G$ we have $G$ is open nested.
\end{proof}

A space is {\it biradial} \cite{A1990} if for each filterbase $\mathscr{F}$ in $X$ accumulating to a point $x\in X$, there exists a chain  $\mathscr{C}$ (i.e., a collection linearly ordered by $\supseteq$) in $X$ converging to $x$ and synchronous with $\mathscr{F}$.

\begin{lemma}\label{l1}\cite{A1990}
Every biradial space is open $\pi$-nested at every point.
\end{lemma}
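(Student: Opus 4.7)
My plan is to fix an arbitrary $x\in X$ and construct an open nest converging to $x$; since $x$ is arbitrary, this will show that $X$ is open $\pi$-nested at every point. The natural filterbase to feed into the biradiality hypothesis is $\mathscr{N}(x)$, the filter of open neighbourhoods of $x$: it trivially converges to $x$, hence accumulates to $x$. Biradiality then supplies a chain $\mathscr{C}$ (a family linearly ordered by $\supseteq$) of subsets of $X$ that converges to $x$ and is synchronous with $\mathscr{N}(x)$, so in particular $x\in\overline{C}$ for every $C\in\mathscr{C}$.

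From $\mathscr{C}$ I would extract the candidate open nest $\mathscr{W}=\{\operatorname{int}(C):C\in\mathscr{C},\ \operatorname{int}(C)\neq\emptyset\}$. This $\mathscr{W}$ is automatically linearly ordered by inclusion, because $C_{1}\subseteq C_{2}$ forces $\operatorname{int}(C_{1})\subseteq\operatorname{int}(C_{2})$, so it is already a chain of open sets. Convergence of $\mathscr{W}$ to $x$ would then fall out of convergence of $\mathscr{C}$: given any open $U\ni x$ there is some $C_{0}\in\mathscr{C}$ with $C_{0}\subseteq U$, and any $C\subseteq C_{0}$ in $\mathscr{C}$ with $\operatorname{int}(C)\neq\emptyset$ gives $\operatorname{int}(C)\subseteq U$, which is exactly what is required.

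The step I expect to be the main obstacle is ensuring that cofinally many $C\in\mathscr{C}$ have nonempty interior; synchrony alone only guarantees $x\in\overline{C}$, which is too weak to rule out $\operatorname{int}(C)=\emptyset$ (think of thin sets clustering at $x$ in a higher-dimensional space). To close this gap I would not apply biradiality to $\mathscr{N}(x)$ directly, but to an enriched filterbase whose elements already encode open interior information, or else perform a transfinite recursion in which, at stage $\alpha$, one uses the regularity of $X$ and the chain structure of $\mathscr{C}$ to select a nonempty open set lying inside a tail of $\mathscr{C}$ below level $\alpha$. Once such an open refinement of the chain is in hand it is \emph{a fortiori} a nest of open sets converging to $x$, proving the lemma at $x$; since $x$ was arbitrary, $X$ is open $\pi$-nested at every point.
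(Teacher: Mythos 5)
Your reduction to the neighbourhood filter is fine as far as it goes, and you correctly locate the obstruction: synchrony with $\mathscr{N}(x)$ only yields $x\in\overline{C}$ for each member of the chain, and the biradiality witness $\mathscr{C}$ is handed to you --- you cannot choose it --- so it may consist entirely of nowhere dense sets, in which case $\mathscr{W}=\{\operatorname{int}(C): C\in\mathscr{C}\}$ is empty and nothing converges. But neither of your proposed repairs closes this gap. The transfinite recursion cannot work as described: if every element of $\mathscr{C}$ is nowhere dense, then no tail of $\mathscr{C}$ contains \emph{any} nonempty open set, and regularity of $X$ is of no help in manufacturing one inside such sets. And ``an enriched filterbase whose elements already encode open interior information'' is precisely the missing idea, left unspecified; until you name the filterbase there is no proof. (For comparison purposes, note the paper itself gives no argument --- it quotes the lemma from Arhangel'ski\v\i's 1990 paper --- so the proof must be supplied in full.)

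The key device is to apply biradiality not to $\mathscr{N}(x)$ but to the filterbase $\mathscr{D}$ of all dense open subsets of $X$: it is closed under finite intersections and accumulates to every point, since $\overline{D}=X$ for each $D\in\mathscr{D}$. If $\mathscr{C}$ is a chain converging to $x$ and synchronous with $\mathscr{D}$, then each $C\in\mathscr{C}$ meets every dense open set; were some $C$ nowhere dense, the set $X\setminus\overline{C}\in\mathscr{D}$ would miss it. Hence every $C$ is somewhere dense and $W_{C}=\operatorname{int}\overline{C}\neq\emptyset$. The family $\{W_{C}: C\in\mathscr{C}\}$ is linearly ordered by inclusion (since $C_{1}\subseteq C_{2}$ gives $\overline{C_{1}}\subseteq\overline{C_{2}}$), and it converges to $x$ by the paper's standing regularity hypothesis: given open $U\ni x$, pick open $V$ with $x\in V\subseteq\overline{V}\subseteq U$ and then $C\in\mathscr{C}$ with $C\subseteq V$, so that $W_{C}\subseteq\overline{V}\subseteq U$. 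This is an open nest converging to $x$, and $x$ was arbitrary. The moral is that the passage from arbitrary chains to open sets must be arranged \emph{before} invoking biradiality, by choosing a filterbase whose synchrony condition forces somewhere-density; your write-up recognizes the difficulty but stops short of resolving it.
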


\begin{lemma}\label{l2}\cite{A1990}
Let $X$ be an open nested space and $\chi(x, X)=\kappa$ for all $x\in X$. Then for any family $\gamma$ of open sets in $X$ such that $|\gamma|<\kappa$ the set $\bigcap\gamma$ is open.
\end{lemma}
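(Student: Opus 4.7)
The plan is to pick an arbitrary $x\in\bigcap\gamma$ and exhibit an open neighborhood of $x$ contained in $\bigcap\gamma$. Because $X$ is open nested, I may fix an open nest $\mathscr{B}$ in $X$ that is a local base at $x$; note that $|\mathscr{B}|\geq\kappa$ by the hypothesis $\chi(x,X)=\kappa$. Since $x\in U$ for every $U\in\gamma$ and each $U$ is open, for each such $U$ I choose some $B_{U}\in\mathscr{B}$ with $B_{U}\subseteq U$. The proof then splits according to whether the subfamily $\mathscr{B}_{\gamma}=\{B_{U}:U\in\gamma\}$ admits a lower bound inside $\mathscr{B}$.

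If there exists $B\in\mathscr{B}$ with $B\subseteq B_{U}$ for every $U\in\gamma$, then $B\subseteq\bigcap_{U\in\gamma}B_{U}\subseteq\bigcap\gamma$, and we are done. Suppose instead that no such $B$ exists. Then for every $B\in\mathscr{B}$ there is some $U\in\gamma$ with $B\not\subseteq B_{U}$, and since $\mathscr{B}$ is totally ordered by inclusion this forces $B_{U}\subsetneq B$. I then claim that $\mathscr{B}_{\gamma}$ is itself a local base at $x$: given any open neighborhood $V$ of $x$, pick $B\in\mathscr{B}$ with $B\subseteq V$, and invoke the preceding observation to find $U\in\gamma$ with $B_{U}\subsetneq B\subseteq V$. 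This contradicts $\chi(x,X)=\kappa$, since $|\mathscr{B}_{\gamma}|\leq|\gamma|<\kappa$.

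The only real work is the dichotomy above; the hard step is recognizing that the nest structure converts the absence of a common lower bound for $\mathscr{B}_{\gamma}$ into cofinality of $\mathscr{B}_{\gamma}$ in $\mathscr{B}$ under reverse inclusion, which in turn makes $\mathscr{B}_{\gamma}$ a local base. The underlying principle is that a linearly ordered local base at a point of character $\kappa$ cannot be thinned by fewer than $\kappa$ cofinal steps. No appeal to the rectifiable structure is needed here; only the nest property and the equality $\chi(x,X)=\kappa$ play a role.
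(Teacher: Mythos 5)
Your proof is correct. Note that the paper itself offers no proof of this lemma: it is imported verbatim from Arhangel'ski\v\i\ \cite{A1990}, so there is no in-paper argument to compare against; your write-up supplies a valid self-contained proof of the cited result. The dichotomy is exactly the right mechanism: either the chosen subfamily $\mathscr{B}_{\gamma}=\{B_{U}:U\in\gamma\}$ has a lower bound $B$ in the nest $\mathscr{B}$, which gives $B\subseteq\bigcap_{U\in\gamma}B_{U}\subseteq\bigcap\gamma$ at once, or else the linearity of $\subseteq$ on $\mathscr{B}$ converts ``no lower bound'' into ``for every $B\in\mathscr{B}$ some $B_{U}\subsetneq B$'', i.e.\ coinitiality of $\mathscr{B}_{\gamma}$ in $\mathscr{B}$, so that $\mathscr{B}_{\gamma}$ is a local base at $x$ of cardinality at most $|\gamma|<\kappa$, contradicting $\chi(x,X)=\kappa$. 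Two small housekeeping remarks: the observation $|\mathscr{B}|\geq\kappa$ is never used and can be dropped; and in the second case you should say explicitly that each $B_{U}$ contains $x$ (automatic, since $B_{U}\in\mathscr{B}$ and every member of a local base at $x$ contains $x$), which is what makes $\mathscr{B}_{\gamma}$ a local base rather than merely a $\pi$-base at $x$. Finally, your argument uses the hypotheses only at the single point $x\in\bigcap\gamma$, so it actually proves the slightly stronger pointwise statement: if $X$ is open nested at $x$ and $\chi(x,X)\geq\kappa$, then $x$ is interior to the intersection of any family of fewer than $\kappa$ open neighborhoods of $x$.
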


\begin{lemma}\label{l4}\cite{LF}
Every bisequential rectifiable space is metrizable.
\end{lemma}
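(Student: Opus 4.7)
The plan is to reduce the problem to a first-countable analogue and then mimic a Birkhoff–Kakutani style construction in the rectifiable setting. First I would observe that any bisequential space is first countable: apply the definition to the neighborhood filter at a point $x$, which trivially accumulates to $x$; the resulting countable filterbase $\xi$ converges to $x$, and the synchronicity with the neighborhood filter together with convergence forces the elements of $\xi$ to form a local base at $x$. Since any rectifiable space is homogeneous (this is recorded in the introduction), first countability at the right neutral element $e$ propagates to every point, so $G$ has countable character globally.

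Next I would build a compatible metric using the operations $p$ and $q$ supplied by Theorem~\ref{t9}. Continuity of $p$ and $q$ at $(e,e)$, combined with a countable decreasing local base at $e$, lets one extract a sequence $\{V_n\}_{n\in\mathbb{N}}$ of open neighborhoods of $e$ shrinking down to $\{e\}$ and satisfying coherence conditions such as $p(V_{n+1},V_{n+1})\subseteq V_n$ and $q(V_{n+1},V_{n+1})\subseteq V_n$. From such a chain one defines a gauge at $e$ by $\rho(x)=\inf\{2^{-n}:x\in V_n\}$ and pushes it to a putative pseudometric on $G$ of the form $d(x,y)=\rho(q(x,y))$, possibly symmetrized. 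Continuity of $q$ and homogeneity then give that the topology induced by $d$ refines the original topology, while the inclusion $V_n\subseteq\{x:d(x,e)<2^{-n+1}\}$ gives the reverse refinement. Separation ($d(x,y)=0\Rightarrow x=y$) follows from the $T_1$ hypothesis and the identity $q(x,x)=e$.

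The main obstacle is the lack of associativity: the classical Birkhoff–Kakutani argument verifies the triangle inequality for the gauge by iterating the group operation, and this step has no direct analogue when $p$ is only a binary operation without associativity. To get around it, one exploits the two defining identities $p(x,q(x,y))=q(x,p(x,y))=y$ to convert nested $p$-expressions into manageable $q$-expressions, and one uses openness of $p$ and $q$ (recorded just after Theorem~\ref{t9}) to choose the chain $\{V_n\}$ with enough additional closure properties that triangle-like estimates survive. Here the full strength of bisequentiality, rather than mere first countability, is helpful: it lets one control sequential convergence of iterated products, which is exactly what is needed to verify that the gauge obtained from $\{V_n\}$ satisfies the metric axioms up to a uniform constant. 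Assembling these pieces yields a compatible metric on $G$, establishing the lemma.
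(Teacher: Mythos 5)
Your proposal breaks at its very first step. It is not true that every bisequential space is first countable, and the argument you give for it is invalid: if $\xi$ is a countable filterbase converging to $x$ and synchronous with the neighborhood filter at $x$, synchronicity only yields $x\in\overline{Q}$ for each $Q\in\xi$, and convergence only yields that every neighborhood of $x$ \emph{contains} some $Q\in\xi$; the elements of $\xi$ are arbitrary sets, not neighborhoods of $x$, so they do not form a local base. A concrete counterexample to the claimed implication is the one-point compactification $A(\omega_1)$ of an uncountable discrete space: given a filterbase accumulating to the point at infinity whose elements are all infinite, extend its trace (together with the Fr\'echet filter) to a nonprincipal ultrafilter on $\omega_1$; since $\omega_1$ carries no countably complete nonprincipal ultrafilter, one extracts a decreasing sequence $A_1\supseteq A_2\supseteq\cdots$ in the ultrafilter with empty intersection, which is a countable synchronous filterbase converging to $\infty$. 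Thus $A(\omega_1)$ is bisequential but has character $\omega_1$ at $\infty$. So the reduction to first countability must itself use the rectifiable structure (the maps $p$, $q$ of Theorem~\ref{t9}); it is not a piece of general topology, and your proof never supplies that step.

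The second half has the same defect in a different place: you correctly identify that non-associativity destroys the Birkhoff--Kakutani verification of the triangle inequality for $d(x,y)=\rho(q(x,y))$, but the proposed repair (``convert nested $p$-expressions into $q$-expressions'' and choose $\{V_n\}$ with ``enough additional closure properties that triangle-like estimates survive'') is exactly the missing mathematics, not a proof of it; note also that $d$ as defined need not even be symmetric, and the remark that the ``full strength of bisequentiality'' controls iterated products is unsubstantiated and sits oddly with your own (attempted) reduction to first countability. For comparison, the paper does not prove Lemma~\ref{l4} at all --- it cites \cite{LF}, where the argument applies bisequentiality to the neighborhood filter at $e$, uses the operations $p$ and $q$ to convert the resulting countable synchronous filterbase into countable ($\pi$-)character at $e$, and then invokes Gul$'$ko's metrization theorem \cite{G1996} (quoted later in this paper in the form: a rectifiable space of countable $\pi$-character is metrizable). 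That is, the genuinely hard metrization step is outsourced to Gul$'$ko's nontrivial theorem rather than re-derived by a gauge construction; if you want a self-contained proof along your lines, you would have to reprove that theorem, and your sketch does not do so.
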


Since every GO-space is biradial, we have the following theorem by Lemmas~\ref{l0}, \ref{l1} and~\ref{l4}.

\begin{theorem}\label{t0}
If GO-space $G$ is rectifiable, then $G$ is metrizable or P-space.
\end{theorem}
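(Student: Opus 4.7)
The plan is to combine the preceding lemmas with a character dichotomy. Since every GO-space is biradial (as stated in the paragraph following Lemma \ref{l0}), Lemma \ref{l1} tells us that $G$ is open $\pi$-nested at every point, and in particular at one point $a\in G$. Feeding this into Lemma \ref{l0} — whose hypothesis is exactly open $\pi$-nestedness at a single point of a rectifiable space — we conclude that $G$ is open nested.

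Next I would use the homogeneity of rectifiable spaces (noted in the introduction) to observe that $\chi(x,G)$ is the same cardinal $\kappa$ for every $x\in G$. The proof then splits according to whether $\kappa=\omega$ or $\kappa\geq\omega_1$.

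In the first case, $G$ is first countable. Since first-countable spaces are bisequential (take the countable local base itself as the synchronous filterbase), Lemma \ref{l4} applies to the first-countable rectifiable space $G$ and yields that $G$ is metrizable. In the second case, $\kappa\geq\omega_1$, so applying Lemma \ref{l2} to the open nested space $G$ with this $\kappa$ shows that the intersection of any countable family of open sets is open. Equivalently, every $G_\delta$-subset of $G$ is open, which is exactly the definition of a $P$-space.

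The main obstacle, and really the only nontrivial point to check, is the bridge from open-$\pi$-nestedness (a single-point hypothesis) to the global open-nested property needed to invoke Lemma \ref{l2}; but this is handled cleanly by Lemma \ref{l0} together with homogeneity, so the argument essentially reduces to correctly organizing the two lemmas around the character dichotomy.
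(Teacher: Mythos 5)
Your proposal is correct and follows essentially the same route as the paper, which also chains Lemmas~\ref{l1}, \ref{l0} and~\ref{l2} to get the dichotomy ``first-countable or P-space'' and then invokes Lemma~\ref{l4} (via first-countable $\Rightarrow$ bisequential) for metrizability. You merely make explicit the steps the paper leaves implicit, namely the homogeneity argument giving a constant character $\kappa$ and the case split $\kappa=\omega$ versus $\kappa\geq\omega_{1}$.
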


\begin{proof}
Since every GO-space is biradial, $G$ is first-countable or P-space by Lemmas~\ref{l0}, \ref{l1} and~\ref{l2}. Therefore, $G$ is metrizable or P-space by Lemma~\ref{l4}.
\end{proof}

\begin{corollary}\label{c0}
If a GO-space $G$ which is a rectifiable space, and also, is locally countably compact or is of point-countable type\footnote{Recall that a space $X$ is of {\it point-countable type} \cite{E} if every
point $x\in X$ is contained in a compact subspace of
$X$ with a countable outer base of open neighborhoods in $X$.}, then $G$ is metrizable.
\end{corollary}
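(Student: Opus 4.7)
The plan is to combine Theorem~\ref{t0} with the standard fact that, in a $T_1$ P-space, every countably compact subset is finite. This fact follows because singletons are closed, so any countable subset is a countable union of closed sets, which in a P-space is again closed (its complement is a countable intersection of open sets, hence open); a countable closed discrete set in $G$ therefore has no limit point, ruling out any infinite countably compact subset. I would state this as a brief lemma or inline observation at the start of the proof.

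By Theorem~\ref{t0}, $G$ is either metrizable, in which case we are done, or a P-space; so I would assume the latter. In the locally countably compact case, some point $x$ has a countably compact neighborhood $U$, which by the reduction above must be finite. Removing the other finitely many closed points of $U$ from a neighborhood of $x$ exhibits $\{x\}$ as open. Since rectifiable spaces are homogeneous, every point of $G$ is then isolated, so $G$ is discrete and hence metrizable.

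In the point-countable type case, every $x$ lies in a compact set $K$ with a countable outer base $\{V_n\}$, and again $K$ is finite. I would use Hausdorffness (which holds since the paper assumes $T_1$ and regular) to choose, for each $y\in K\setminus\{x\}$, disjoint open sets $W_y\ni x$ and $O_y\ni y$; intersecting the $W_y$ yields an open neighborhood $W$ of $x$ disjoint from $K\setminus\{x\}$. The verification that $\{V_n\cap W:n\in\mathbb{N}\}$ is a countable local base at $x$ proceeds by taking an arbitrary open $U\ni x$, forming the open superset $(U\cap W)\cup\bigcup_{y\in K\setminus\{x\}}O_y$ of $K$, trapping some $V_n$ inside it by the outer base property, and intersecting with $W$. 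Hence $G$ is first countable, and combined with the P-space property the intersection of this countable base is the open set $\{x\}$, so $G$ is discrete and thus metrizable.

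The main obstacle I expect is the bookkeeping in the second case needed to produce a genuine countable local base at $x$ from the outer base of $K$; once the separation trick isolating $x$ inside $W$ is in place, the homogeneity of rectifiable spaces together with the P-space property closes the argument cleanly.
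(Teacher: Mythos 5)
Your proof is correct and follows essentially the route the paper intends: the corollary is stated as an immediate consequence of Theorem~\ref{t0}, with the implicit argument (spelled out later in the proof of Theorem~\ref{t10}) being that a non-metrizable $G$ is a P-space, in which every countably compact subset is finite, so either hypothesis forces isolated points and, by homogeneity, discreteness. Your filling-in of the details --- the closed-discreteness of countable sets in a $T_1$ P-space, and the separation bookkeeping turning the outer base of the finite compactum $K$ into a countable local base at $x$ --- is accurate and exactly what the paper leaves to the reader.
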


Let $G$ be a rectifiable space with operations $p$ and $q$ as defined
after Theorem~\ref{t9}. A subset $A\subset G$ is said to be $p, q$ {\it stable} if
$p(A, A)\subset A$ and $q(A, A)\subset A$.

\begin{lemma}\label{l3}
Let $G$ be a GO-space which is a rectifiable space. If $C$ is the connected component contains the right neutral element of $G$ and if $C$ has at least two points, then $C$ is open and $p, q$ stable in $G$.
\end{lemma}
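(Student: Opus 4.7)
The plan is to treat the two conclusions separately, using two different features of $G$: homogeneity (which comes from rectifiability) for openness of $C$, and continuity of the operations $p,q$ together with connectedness for stability. The standing input I will rely on is the elementary fact that every connected subspace of a GO-space is order-convex (otherwise a splitting point would disconnect it via the two order-open half-lines), together with the fact that a convex subspace of a GO-space, equipped with the induced order topology, is again a GO-space, and is a connected LOTS precisely when its order has no jumps and no gaps.

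For openness of $C$, I would first observe that, by order-convexity, $C$ is a connected LOTS in its own right; hence its order is dense, meaning that for any $y<z$ in $C$ there is some $w\in C$ with $y<w<z$. Since $|C|\ge 2$, I can pick such a pair, so the GO-open interval $(y,z)=\{g\in G:y<g<z\}$ is a nonempty open subset of $G$, and it sits inside $C$ by convexity. Thus $C$ has nonempty interior in $G$. Now I invoke the homogeneity of the rectifiable space $G$: given any $x\in C$ and any point $x_0\in\mathrm{Int}(C)$, a self-homeomorphism of $G$ carrying $x_0$ to $x$ maps $C$ onto the connected component through $x$, which is again $C$, and maps $\mathrm{Int}(C)$ into $\mathrm{Int}(C)$. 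Hence every point of $C$ is interior, so $C$ is open.

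For $p,q$-stability, I would use the identities from Theorem~\ref{t9}: $q(e,e)=e$ is immediate, and $p(e,e)=p(e,q(e,e))=e$ follows from $p(x,q(x,y))=y$ with $x=y=e$. Now $C\times C$ is connected (as a product of connected nonempty spaces), so its images $p(C\times C)$ and $q(C\times C)$ under the continuous maps $p,q\colon G^{2}\to G$ are connected subsets of $G$. Both contain $e$, so both lie entirely in the connected component of $e$, namely $C$; that is precisely $p(C,C)\subset C$ and $q(C,C)\subset C$.

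I do not expect a real obstacle here; the only step that requires a moment of care is the nonempty-interior claim, where one must remember to use both hypotheses on $C$ (having at least two points and being a \emph{connected} subspace of a GO-space) to extract the dense order that produces a genuine nondegenerate open interval in $G$. Once that is in place, homogeneity and the continuity of $p,q$ do the rest.
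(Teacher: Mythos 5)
Your proof is correct and takes essentially the same route as the paper: openness of $C$ via homogeneity of the rectifiable space $G$, and $p,q$-stability via connectedness of $C\times C$, continuity of $p$ and $q$, and the fixed point $p(e,e)=q(e,e)=e$ (the latter is exactly what lies behind the paper's ``it is easy to check''). The only divergence is that where the paper disposes of the openness step by citing \cite[Lemma 2.2]{LC}, you prove it inline --- convexity of connected subsets of a GO-space, density of the induced order on the connected LOTS $C$ giving a nondegenerate open interval inside $C$, then homogeneity of $G$ spreading the interior point over all of $C$ --- which makes the argument self-contained but rests on the same underlying mechanism rather than a different approach.
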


\begin{proof}
Since $G$ is rectifiable, the subspace $C$ is homogeneous, and hence $C$ is open in $G$ by \cite[Lemma 2.2]{LC}. Also, since $C$ is is the connected component contains the right neutral element of $G$, it easy to check that $p(C, C)\subset C$ and $q(C, C)\subset C$. Therefore, $C$ is open and $p, q$ stable.
\end{proof}

In \cite{A20092}, A.V. Arhangel'ski\v\i\  proved that a connected and orderable rectifiable space is homeomorphic to the space $\mathbb{R}$ of the real numbers. He made a small mistake. His result is `` a
connected orderable space $G$ is rectifiable and has more than one point, then $G$ is
homeomorphic to the space $\mathbb{R}$ of the real numbers''. Therefore, we have the following lemma.

\begin{lemma}\label{l6}
If a connected GO-space $G$ is rectifiable and has more than one point, then $G$ is homeomorphic to the space $\mathbb{R}$ of the real numbers.
\end{lemma}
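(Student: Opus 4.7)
The plan is to reduce this to the corrected version of Arhangel'ski\v\i's theorem quoted just above the lemma, by upgrading the GO-space structure on $G$ to an actual LOTS structure. The key input is the classical fact, already recorded in Section~2 (citing \cite{E}), that every connected subspace of a suborderable space is orderable. Since $G$ is by hypothesis a GO-space, it is homeomorphic to some subspace of a LOTS $Y$; as $G$ is connected, this subspace is a connected subspace of $Y$, and hence the induced order topology on $G$ coincides with its given topology. In other words, the connectedness hypothesis automatically promotes ``suborderable'' to ``orderable.''

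Once this promotion is made, $G$ is a connected orderable rectifiable space with $|G|>1$, and the corrected statement of Arhangel'ski\v\i's theorem (displayed by the author immediately before Lemma~\ref{l6}) delivers the desired conclusion $G\cong\mathbb{R}$. I do not anticipate any real obstacle here: the only point requiring care is that one must invoke the corrected form of the theorem rather than the original misstatement in \cite{A20092}, but this subtlety has already been made explicit in the paragraph preceding the lemma. In sum, the proof reduces to a single sentence combining the two cited results.
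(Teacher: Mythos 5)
Your proposal is correct and matches the paper's own (implicit) justification: the paper states Lemma~\ref{l6} as an immediate consequence of the corrected form of Arhangel'ski\v\i's theorem from \cite{A20092} together with the fact, recorded in Section~2 with reference to \cite{E}, that every connected subspace of a suborderable space is orderable. Your write-up simply makes explicit the promotion from ``GO-space'' to ``orderable'' that the paper leaves to the reader, so there is nothing to add.
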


\begin{theorem}\label{t2}
If a GO-space $G$ is rectifiable, then $G$ is metrizable or a totally disconnected\footnote{A space is {\it totally disconnected} if each component is one point.} P-space.
\end{theorem}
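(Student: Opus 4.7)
The plan is to reduce to Theorem~\ref{t0} and then rule out non-trivial connected components in the P-space case using Lemmas~\ref{l3} and~\ref{l6}.

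First, I would apply Theorem~\ref{t0} directly to conclude that $G$ is either metrizable (in which case we are done) or a P-space. So assume $G$ is a non-metrizable rectifiable GO-space; it suffices to prove that $G$ is totally disconnected under the assumption that $G$ is a P-space. Suppose for contradiction that some connected component has more than one point. Since a rectifiable space is homogeneous, all connected components are homeomorphic, so in particular the component $C$ of the right neutral element $e$ has more than one point.

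Next I would invoke Lemma~\ref{l3} to conclude that $C$ is open in $G$ and is $p,q$-stable. The $p,q$-stability, together with the identities of Theorem~\ref{t9}, implies that $C$ (with $e$ as right neutral element and with the restrictions $p|_{C\times C}$ and $q|_{C\times C}$) is itself a rectifiable space. Moreover, $C$ is a connected subspace of a GO-space, so (as noted in Section 2) $C$ is orderable, hence in particular a GO-space. Thus $C$ is a connected GO-space which is rectifiable and has more than one point, so by Lemma~\ref{l6} the space $C$ is homeomorphic to $\mathbb{R}$.

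Finally I would derive the contradiction from the P-space assumption: since $C$ is open in the P-space $G$, the subspace $C$ is itself a P-space. But $\mathbb{R}$ is plainly not a P-space (singletons are $G_\delta$ but not open), contradicting $C\cong\mathbb{R}$. Hence every connected component of $G$ is a singleton, i.e.\ $G$ is totally disconnected, completing the proof. The only slightly delicate point is verifying that the restrictions of $p$ and $q$ genuinely turn $C$ into a rectifiable space in its own right, but this is immediate from $p,q$-stability together with the algebraic characterization in Theorem~\ref{t9}; everything else is a direct assembly of the lemmas already established.
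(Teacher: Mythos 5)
Your proposal is correct, and it follows the paper's skeleton up to the key geometric fact: both arguments invoke Theorem~\ref{t0} to reduce to the P-space case, then use Lemma~\ref{l3} to get that the component $C$ of $e$ is open and $p,q$ stable, and Lemma~\ref{l6} to conclude $C\cong\mathbb{R}$. Where you genuinely diverge is the closing contradiction. The paper contradicts \emph{non-metrizability}: it writes $G=\bigcup\{x\cdot C: x\in G\}$, argues that $G$ is a topological sum of real lines, hence metrizable. You instead contradict the \emph{P-space} property directly: $C$ is an open subspace of the P-space $G$, hence itself a P-space, while $\mathbb{R}$ is not a P-space (singletons are non-open $G_\delta$-sets). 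Your endgame is shorter and more local, and it sidesteps the paper's somewhat delicate claim that the translates $\{x\cdot C: x\in G\}$ form a disjoint clopen decomposition; you also make explicit a point the paper glosses over entirely, namely that $p,q$-stability plus the characterization in Theorem~\ref{t9} is what licenses applying Lemma~\ref{l6} to $C$ as a rectifiable space in its own right (together with the Section~2 fact that connected subspaces of suborderable spaces are orderable). The one thing the paper's longer route buys that yours does not: the intermediate conclusion that a non-totally-disconnected rectifiable GO-space is a topological sum of real lines is reused verbatim in the proof of Theorem~\ref{t1}, which cites ``the proof of Theorem~\ref{t2}'' for exactly that structural fact; if your proof replaced the paper's, Theorem~\ref{t1} would need that sum-of-lines argument restored separately.
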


\begin{proof}
Suppose that $G$ is non-metrizable. Then it follows from Theorem~\ref{t0} that $G$ is a P-space. Let $G$ be not totally disconnected. Then the connected component $C$ containing the identity of $G$ is open and $p, q$ stable by Lemma~\ref{l3}. Then $C$ is homeomorphic to the space $\mathbb{R}$ of the real numbers by Lemma~\ref{l6}. Then $G=\cup\{x\cdot C: x\in G\}$ since $e\in C$. Since $C$ is connected, $G$ is the topological sum of real lines. Moreover, since $\{x\cdot C: x\in G\}$ is a disjoint family since $C$ is connected. Therefore, $G$ is the topological sum of real lines. Then $G$ is metrizable, which is a contradiction. Hence $G$ is totally disconnected.
\end{proof}

\begin{theorem}\label{t1}
Let a GO-space $G$ be metrizable. If $G$ is a rectifiable space, then $G$ is orderable.
\end{theorem}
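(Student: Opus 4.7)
The plan is to analyze $G$ via the connected component $C$ of the right neutral element $e$, paralleling the structure of the proof of Theorem~\ref{t2}.

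First, suppose $|C|\geq 2$. By Lemma~\ref{l3}, $C$ is open and $p,q$-stable in $G$, and by Lemma~\ref{l6}, $C$ is homeomorphic to $\mathbb{R}$. As in the proof of Theorem~\ref{t2}, the family $\{x\cdot C : x\in G\}$ partitions $G$ into pairwise disjoint open translates, each homeomorphic to $\mathbb{R}$, so $G$ is the topological sum of copies of $\mathbb{R}$ indexed by some set $I$. To witness orderability, I would well-order $I$ by an ordinal $\kappa$ and equip $\kappa\times\mathbb{R}$ with the lexicographic order. A short verification shows that the resulting order topology coincides with the disjoint-union topology on $G$: basic intervals within a single copy restrict to standard intervals of $\mathbb{R}$, while a basic interval $\bigl((i,r),(j,s)\bigr)$ with $i<j$ decomposes as the tail of copy $i$ above $r$, all intermediate copies in full, and the initial segment of copy $j$ below $s$.

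Next, suppose $|C|=1$, so that $G$ is totally disconnected. Because $G$ is a GO-space, this forces $G$ to be zero-dimensional: any convex open neighborhood of a point, being non-degenerate and totally disconnected, contains a clopen convex subneighborhood, so clopen convex sets form a local base. Combined with metrizability and the homogeneity coming from the rectifiable structure, $G$ is a zero-dimensional metrizable homogeneous GO-space. If $G$ has an isolated point, homogeneity forces $G$ to be discrete, and any set of cardinality $\kappa$ carries a discrete LOTS order (for instance, the lexicographic order on $\kappa\times\mathbb{Z}$, where every point has an immediate predecessor and successor). Otherwise $G$ is dense-in-itself, and I would construct a compatible linear order through a hierarchical Cantor-scheme partition of $G$ into clopen convex cells of vanishing diameter, linearly ordering the cells at each stage consistently with the refinement and passing to the resulting lexicographic/inverse-limit order.

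The main technical hurdle is the dense-in-itself totally disconnected subcase. While the desired linear order is immediate in the classical examples ($\mathbb{Q}$, the irrationals, the Cantor set), guaranteeing that the Cantor-scheme construction yields an order whose topology is \emph{exactly} $\tau$, rather than a strictly coarser one, requires careful coordination between the partition data, the metric, and the original GO-structure on $G$; the non-separable situation is particularly delicate because one cannot simply appeal to the classification of zero-dimensional Polish homogeneous spaces.
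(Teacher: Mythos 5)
Your non-totally-disconnected case is sound and matches the paper: the paper likewise observes (via Lemma~\ref{l3}, Lemma~\ref{l6}, and the argument of Theorem~\ref{t2}) that $G$ is a topological sum of real lines, and then simply cites Herrlich \cite{HH} for orderability, where you instead verify directly that the lexicographic order on $\kappa\times\mathbb{R}$ induces the sum topology; that verification is correct (it works precisely because each fibre $\mathbb{R}$ has no endpoints) and is a harmless, self-contained replacement for the citation.

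The genuine gap is exactly where you flag it: the totally disconnected, dense-in-itself case. Everything before that point (zero-dimensionality of a totally disconnected GO-space, the discrete subcase via homogeneity) is fine, but the Cantor-scheme step is not a routine verification that can be waved through --- it is the entire content of the theorem in this case. The standard difficulty is the one you name: given a refining sequence of clopen partitions of vanishing mesh (which exists, since a zero-dimensional metrizable space is ultraparacompact, even in the non-separable setting), an arbitrary lexicographic ordering of the cells can yield an order topology strictly coarser than $\tau$ at points that are order-endpoints of their cells at cofinally many levels, so the orders on the cells must be coordinated across levels to control which cells acquire maxima and minima and from which side each point is approached. Homogeneity of $G$ does not dissolve this; the statement needed is a theorem about arbitrary totally disconnected metrizable GO-spaces. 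The paper closes this case in one line by invoking \cite[Lemma 8]{TY} (Tanaka), which rests on the classical Herrlich-type result that zero-dimensional metrizable spaces are orderable. Unless you either carry out that construction in full (essentially reproving Herrlich/Tanaka) or cite such a result, your proposal is an outline with its hardest step missing rather than a proof.
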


\begin{proof}
If $G$ is totally disconnected, then $G$ is orderable by \cite[Lemma 8]{TY}. Let $G$ be not totally disconnected. Then $G$ is the topological sum of real lines by the proof of Theorem~\ref{t2}, and therefore, $G$ is orderable \cite{HH}.
\end{proof}

In \cite{LC}, C. Liu, M. Sakai and Y. Tanaka proved the following theorem:

\begin{theorem}\cite{LC}
Let $G$ be a topological group. Then the following (1) and (2) are equivalent. When $G$ is non-metrizable, the following (1), (2) and (3) are equivalent.
\begin{enumerate}
\item $G$ is orderable;

\item $G$ is suborderable;

\item $G$ is a biradial space.
\end{enumerate}
\end{theorem}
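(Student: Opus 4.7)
The implication $(1)\Rightarrow(2)$ is automatic because every LOTS is, by definition, a GO-space, so every orderable space is suborderable. The plan is therefore to handle $(2)\Rightarrow(1)$ first (giving the equivalence in the general topological-group setting) and then, under the non-metrizability assumption, to establish $(1)\Leftrightarrow(3)$.

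For $(2)\Rightarrow(1)$ I would feed the hypothesis directly into the machinery developed earlier in this section. Since every topological group is a rectifiable space (using $\varphi(x,y)=(x,x^{-1}y)$ as already noted in the introduction), a suborderable topological group $G$ is a rectifiable GO-space, so Theorem~\ref{t2} splits into two cases. If $G$ is metrizable, then Theorem~\ref{t1} yields that $G$ is orderable. If instead $G$ is a totally disconnected P-space, then $G$ is in particular a totally disconnected GO-space, and \cite[Lemma~8]{TY} (the same result invoked in the proof of Theorem~\ref{t1}) again gives orderability. This dispatches $(2)\Rightarrow(1)$ in both cases.

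For the equivalence of $(1)$ and $(3)$ in the non-metrizable setting, the direction $(1)\Rightarrow(3)$ is the routine one: orderable implies GO-space, and every GO-space is biradial, as already recalled before Theorem~\ref{t0}. The substantive direction is $(3)\Rightarrow(1)$. Starting from a non-metrizable biradial topological group $G$, Lemma~\ref{l1} says $G$ is open $\pi$-nested at every point, and then Lemma~\ref{l0} (applicable because $G$ is rectifiable) upgrades this to $G$ being open nested. If $\chi(e,G)=\aleph_{0}$ then $G$ would be first-countable, hence bisequential, hence metrizable by Lemma~\ref{l4}, contradicting our assumption; so $\chi(e,G)\geq\aleph_{1}$, and Lemma~\ref{l2} forces $G$ to be a P-space. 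At this point one knows that $G$ is a biradial topological-group P-space, and the goal is to produce a compatible linear order.

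The main obstacle is precisely this last step: converting the combinatorial data (open nestedness together with the P-space property and group structure) into an actual linear ordering whose order topology recovers the given topology. My plan would be to use the open nest $\mathscr{U}$ at $e$ produced by Lemma~\ref{l0} as a well-ordered neighborhood base $\{U_{\alpha}:\alpha<\kappa\}$ with $U_{\beta}\subsetneq U_{\alpha}$ for $\alpha<\beta$; the P-space property guarantees that each countable intersection of the $U_{\alpha}$'s is still open, so the $U_{\alpha}$'s together with their translates $xU_{\alpha}$ form a clopen partition at each level (using that $U_{\alpha}$ can be symmetrized to a clopen subgroup since totally disconnected P-groups admit bases of clopen subgroups). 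Stacking these clopen partitions into a tree and lexicographically ordering the branches then yields a linear order on $G$ whose order topology agrees with the original one, which is exactly orderability. The delicate point will be verifying compatibility of the lexicographic order with the group-translation structure and checking that no ``jumps'' or missing endpoints spoil the match between the order topology and the group topology; this is where the biradiality hypothesis (rather than mere open nestedness) does essential work, because it forces every accumulating filterbase to be tracked by a chain, ruling out the pathologies that would otherwise make the constructed order topology strictly coarser than the group topology.
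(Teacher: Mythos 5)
This statement is quoted in the paper from Liu--Sakai--Tanaka \cite{LC} and is given \emph{no proof} there -- it appears only as motivation for the open question that immediately follows it -- so your attempt can only be judged on its own terms, and on those terms it has a genuine gap. In your $(2)\Rightarrow(1)$ argument, you handle the non-metrizable case by noting that Theorem~\ref{t2} makes $G$ a totally disconnected P-space and then invoking \cite[Lemma~8]{TY} to conclude orderability. But that lemma is applied in the paper (in the proof of Theorem~\ref{t1}) only under a metrizability hypothesis: it concerns \emph{metrizable} totally disconnected GO-spaces. If it yielded orderability of totally disconnected GO-spaces without metrizability, then the question the author poses right after the quoted theorem -- whether every non-metrizable suborderable rectifiable space is orderable -- would be trivially answered in the affirmative, since such a space is totally disconnected by Theorem~\ref{t2}; the fact that this is posed as open shows the lemma does not apply there. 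The non-metrizable case of $(2)\Rightarrow(1)$ is precisely the substantive, group-specific content of the theorem of \cite{LC}, and your argument does not supply it.

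Your $(3)\Rightarrow(1)$ direction has the same hole in a more visible form. The first half is sound and parallels the paper's Theorem~\ref{t0}: Lemmas~\ref{l1}, \ref{l0}, \ref{l2} and \ref{l4} correctly force a non-metrizable biradial group to be an open nested P-space, and your observation that one then gets a neighborhood base at $e$ of clopen subgroups is fine for groups (it is the group-theoretic analogue of Lemma~\ref{l7}). But from there you only sketch the lexicographic/branch construction of a compatible order and explicitly defer the ``delicate point'' -- that the resulting order topology is neither coarser nor finer than the group topology, and that biradiality rules out the pathologies. That verification \emph{is} the theorem; without it nothing has been proved, and note that the same unproved construction is what would be needed to repair $(2)\Rightarrow(1)$, so the two gaps are really one: the passage from ``non-metrizable P-group with a totally ordered base of clopen subgroups'' to ``orderable,'' which is carried out in \cite{LC} and nowhere in this paper (for rectifiable spaces it remains open, as the paper's question records). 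A complete write-up would either reproduce that construction in detail or cite the specific orderability criterion (e.g., a van Dalen--Wattel-type characterization) and check its hypotheses.
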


Therefore, we have the following question:

\begin{question}
If a non-metrizable GO-space $G$ which is a rectifiable space, is $G$ orderable?
\end{question}

By Theorem~\ref{t2}, we have the following theorem.

\begin{theorem}\label{t10}
Let $G$ be a rectifiable space which is a GO-space. Then, $G$ is metrizable if one of the following properties (1)-(7) holds:
\begin{enumerate}
\item $G$ is locally separable;

\item all points of $G$ are $G_{\delta}$-sets;

\item $G$ is a quasi-$k$-space\footnote{A space is a {\it quasi-$k$-space}
\cite{NJ1} if it is determined by a cover of countably compact subsets.};

\item $t(G)\leq\omega$;

\item $G$ is (locally) a $\Sigma$-space\footnote{A space $X$ is a $\Sigma$-space, if
there exist a $\sigma$-locally finite closed cover $\mathscr{F}$ in $X$ , and a cover $\mathscr{C}$ of countably
compact closed subsets in $X$ such that, for $C\subset U$ with $C\in\mathscr{C}$ and $U$ open
in $X,$ $C\subset F\subset U$ for some $F\in\mathscr{F}$.} ;

\item some countably compact subset of $G$ is infinite;

\item some connected subset of $G$ contains at least two points;
\end{enumerate}
\end{theorem}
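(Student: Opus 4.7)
The plan is to invoke the dichotomy of Theorem~\ref{t2}: either $G$ is metrizable (and we are done) or $G$ is a totally disconnected $P$-space. I assume the latter throughout and show that each of (1)--(7) either forces $G$ to be discrete (hence metrizable) or produces an outright contradiction.

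First I would record the elementary consequences of the $P$-space property. Since $G_\delta$-sets are open, every countable union of closed sets is closed, so every countable subset of $G$ is closed and discrete. This yields three observations used repeatedly below: (a)~a separable $P$-space is countable, and being a $P$-space with $G_\delta$ points is discrete; (b)~every countably compact subspace of a $P$-space is finite; (c)~if every singleton of $G$ is $G_\delta$, then every singleton is open.

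These dispose of most cases. Condition (2) is exactly (c). For (1), local separability combined with (a) makes $G$ locally, hence globally, discrete (a separable open neighbourhood is discrete, and its points are then isolated in $G$). For (3), item (b) reduces the cover of countably compact subsets to a cover by finite (discrete) subsets; since the topology of $G$ is determined by this cover, every singleton is open. For (4), given $A\subset G$ and $x\in\overline{A}$, countable tightness supplies a countable $B\subset A$ with $x\in\overline{B}$, but $B$ is closed by the $P$-space property, so $x\in B\subset A$; hence every subset of $G$ is closed and $G$ is discrete. Condition (6) contradicts (b) directly, and (7) contradicts the total disconnectedness supplied by Theorem~\ref{t2}.

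The \emph{main obstacle} is (5). Here $G$ is (locally) a $\Sigma$-space, so there is an open $\Sigma$-subspace $U$ carrying a $\sigma$-locally finite closed network $\mathscr{F}$ modulo a countably compact closed cover $\mathscr{C}$. By the homogeneity of rectifiable spaces it suffices to work at the right neutral element $e\in U$. By (b), each $C\in\mathscr{C}$ is finite, and a regularity argument combined with the network condition produces, for every open neighbourhood $V$ of $e$, some $F\in\mathscr{F}$ with $e\in F\subset V$; thus the countable subfamily $\{F\in\mathscr{F}:e\in F\}$ is a network at $e$. The delicate step is then to exploit the $P$-space property to collapse this countable family of closed sets into an open base at $e$ (the intersection of any countable subcollection being simultaneously open and closed), ultimately isolating $e$ and forcing $G$ to be discrete. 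Extracting discreteness from a $\sigma$-locally finite network in the rectifiable $P$-space setting is where the bulk of the technical work will lie.
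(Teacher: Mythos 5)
Your overall architecture is the same as the paper's: apply the dichotomy of Theorem~\ref{t2} and, assuming $G$ is a totally disconnected $P$-space, show each of (1)--(7) forces discreteness or a contradiction. Your elementary observations (a)--(c) are correct (in a $T_1$ $P$-space every countable set is closed and discrete), and your arguments for (1)--(4), (6) and (7) all go through; indeed they are \emph{more} self-contained than the paper, which handles (6) and (7) as you do but disposes of (1)--(5) in a single stroke by citing \cite[Proposition 1.3]{TY1}, which says precisely that a $P$-space with any of the properties (1)--(5) is discrete.

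The genuine gap is case (5), which you yourself flag as unfinished, and the route you sketch has two concrete defects. First, the $\Sigma$-space condition does not give that $\{F\in\mathscr{F}: e\in F\}$ is a network at $e$: for an open $V\ni e$, the cover element $C\in\mathscr{C}$ with $e\in C$ need not lie inside $V$, and although $C$ is finite (so one can enlarge $V$ to an open $U\supset C$), the condition only produces $F$ with $C\subset F\subset U$; such an $F$ may well stick out of $V$, so you never obtain $e\in F\subset V$. Second, your proposed collapsing mechanism --- ``the intersection of any countable subcollection being simultaneously open and closed'' --- misstates the $P$-space property: countable intersections of \emph{open} sets are open, but the members of $\mathscr{F}$ are closed, and a countable intersection of closed sets in a $P$-space is merely closed, with no reason to be open. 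Any honest argument here must instead exploit the local finiteness of each level $\mathscr{F}_n$ (shrinking neighborhoods of a point so as to control which members of $\mathscr{F}_n$ they meet, and then intersecting these countably many \emph{open} sets via the $P$-property), and this is exactly the nontrivial content of Tanaka's Proposition 1.3 that the paper invokes by citation. As written, your proof establishes the theorem for (1)--(4), (6), (7) but leaves (5) unproved, and the sketch for (5) would fail at both steps named above.
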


\begin{proof}
Let $G$ be non-metrizable. Then $G$ is totally disconnected $P$-space by Theorem~\ref{t2}, which is obvious contradictory to (7). Also, it easily seen that every countably compact subset of $G$ is finite which is a contradiction with (6). Moreover, if $G$ satisfies one of properties (1)-(5), then it is easy to see that $G$ is discrete space by \cite[Proposition 1.3]{TY1}, which is a contradiction. Therefore, $G$ must be metrizable if $G$ satisfies one of the properties (1)-(7).
\end{proof}

\begin{theorem}
Let $G$ be a rectifiable space. Then $G$ is metrizable if and only if $G^{\omega}$ is biradial.
\end{theorem}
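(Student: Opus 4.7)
The forward direction is straightforward: if $G$ is metrizable then $G^\omega$ is metrizable and therefore first countable, so at each point a descending countable local base is a chain converging to that point and synchronous with any filterbase accumulating to it, witnessing biradiality of $G^\omega$.

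For the converse, suppose $G^\omega$ is biradial. Observe first that $G^\omega$ is itself rectifiable: the maps $p,q$ from Theorem~\ref{t9} act coordinatewise on $G^\omega$, with right neutral element $\mathbf{e}=(e,e,\ldots)$. By Lemma~\ref{l1}, $G^\omega$ is open $\pi$-nested at every point, and Lemma~\ref{l0} then upgrades this to $G^\omega$ being open nested. Since rectifiable spaces are homogeneous, $\chi(x,G^\omega)$ takes a common value $\kappa$ at all points. The plan is to show $\kappa=\omega$: then $G^\omega$ is first countable, hence bisequential, hence metrizable by Lemma~\ref{l4}, and $G$ inherits metrizability as the closed subspace $G\times\{(e,e,\ldots)\}\subset G^\omega$.

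The crux is ruling out $\kappa\geq\omega_1$. If $\kappa\geq\omega_1$, Lemma~\ref{l2} makes every countable intersection of open sets in $G^\omega$ open, so $G^\omega$ becomes a $P$-space. In a $T_1$ $P$-space every convergent sequence is eventually constant, because the complement of any countable set missing the limit is $G_\delta$ and hence open. However, if $|G|\geq 2$ (the singleton case is trivial), picking $a\neq e$ in $G$ and setting $x_n\in G^\omega$ to be the point with $a$ in the $n$-th coordinate and $e$ elsewhere produces a non-eventually-constant sequence converging to $\mathbf{e}$, a contradiction. Hence $\kappa=\omega$ and we are done. The main obstacle is precisely this dichotomy between first countability and $P$-space behavior: homogeneity from rectifiability is what lets us pass from Lemma~\ref{l2} (a pointwise statement about character) to a global $P$-space conclusion, and the product structure of $G^\omega$ is what supplies the nontrivial convergent sequence that clinches the contradiction.
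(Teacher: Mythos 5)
Your proof is correct, but it takes a genuinely different route from the paper's. The paper disposes of the theorem in two lines by citing \cite[Corollary 2.7]{LC} --- $G^{\omega}$ is biradial if and only if $G$ is bisequential --- and then applying Lemma~\ref{l4}. You instead rerun, at the level of $G^{\omega}$, the dichotomy argument behind Theorem~\ref{t0}: you note that $G^{\omega}$ is rectifiable (the maps $p$, $q$ of Theorem~\ref{t9} act coordinatewise), hence homogeneous, apply Lemmas~\ref{l1} and~\ref{l0} to make $G^{\omega}$ open nested, and use Lemma~\ref{l2} to force the dichotomy ``first countable or $P$-space''; the $P$-space horn is then killed by the sequence $x_{n}\to\mathbf{e}$ built from a single $a\neq e$, which is nontrivially convergent --- a clean use of the product structure that has no analogue in the paper's citation-based proof. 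All steps check: the $T_{1}$ hypothesis needed for ``convergent sequences in $P$-spaces are eventually constant'' is among the paper's standing assumptions; first countable does imply bisequential, so Lemma~\ref{l4} applies to $G^{\omega}$; and metrizability passes to the subspace $\{(x,e,e,\dots):x\in G\}$ (closedness is not even needed there). What each approach buys: the paper's proof is shorter but leans on an external equivalence whose hard direction (biradiality of the full power forcing bisequentiality of the factor) is hidden in \cite{LC}; yours is self-contained within the paper's own toolkit, at the modest cost of justifying rectifiability of the infinite power, which you do. One micro-simplification: once you know $\chi(G^{\omega})=\omega$, you could skip metrizing $G^{\omega}$ altogether --- $G$ embeds in $G^{\omega}$, so $G$ is first countable, hence bisequential, and Lemma~\ref{l4} applies directly to $G$.
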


\begin{proof}
In \cite[Corollary 2.7]{LC}, C. Liu, M. Sakai and Y. Tanaka proved that $G^{\omega}$ is biradial if and only if $G$ is bisequential. Since a bisequentially rectifiable space is metrizable by Lemma~\ref{l4},  it follows that $G^{\omega}$ is biradial if and only if $G$ is metrizable.
\end{proof}

\begin{theorem}
Let $(G, \tau)$ be any rectifiable space which  is not totally disconnected. Then $(G, \tau)$ is orderable as a topological space if and only if $(G, \tau)$ contains an open connected and $p, q$ stable subspace $H$ topologically isomorphic with $\mathbb{R}$.
\end{theorem}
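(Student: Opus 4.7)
The plan is to handle the two implications separately. For the ``only if'' direction, given that $(G,\tau)$ is orderable and not totally disconnected, the natural candidate for $H$ is the connected component $C$ of the right neutral element $e$. Since an orderable space is in particular a GO-space, and $G$ is not totally disconnected, $C$ has at least two points (by homogeneity, some component does, hence so does the one containing $e$). Lemma~\ref{l3} then gives that $C$ is open and $p,q$-stable. Moreover $C$, being a connected subspace of a LOTS, is itself a connected LOTS; together with $p,q$-stability this makes it a connected rectifiable GO-space with more than one point, so Lemma~\ref{l6} yields $C\cong\mathbb{R}$. Setting $H=C$ completes this direction.

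For the converse, suppose $H$ is an open, connected, $p,q$-stable subspace of $G$ with $H\cong\mathbb{R}$. Since $q(h,h)=e$ for any $h\in H$, the $p,q$-stability forces $e\in H$. Each left translation $L_x\colon y\mapsto p(x,y)$ is a homeomorphism of $G$, so $x\cdot H=L_x(H)$ is an open neighborhood of $x$ homeomorphic to $\mathbb{R}$; thus $G$ is a (not necessarily second-countable) Hausdorff $1$-manifold. My strategy is to show that the connected components of $G$ are open and each homeomorphic to $\mathbb{R}$; once this is established, $G$ is the topological sum of copies of $\mathbb{R}$ (so orderable by placing a linear order on the index set of components and concatenating lexicographically).

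To carry this out, let $C$ denote the component of $e$, so $H\subseteq C$. The set $C$ is $p,q$-stable in any rectifiable space, since $p(C,C)$ and $q(C,C)$ are continuous images of the connected set $C\times C$ and contain $p(e,e)=e$ and $q(e,e)=e$ respectively. Because $H\subseteq C$ is open and $L_x$ permutes components, every component is open. The remaining and decisive step is to prove $C=H$; once we have that, $x\cdot C=L_x(C)$ coincides with the component of $x$, the components partition $G$ into open copies of $\mathbb{R}$, and the argument concludes exactly as in the last paragraph of the proof of Theorem~\ref{t2}.

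The main obstacle is therefore the equality $C=H$. The obvious route is to show that $H$ is closed in $C$: if $x\in\overline H\cap C$, then $x\cdot H$ is an open neighborhood of $x$ that meets $H$, yielding $y\in H$ with $y=p(x,h)$ for some $h\in H$, and one would like to deduce $x\in H$. In a topological group this is immediate via right inverses, but a rectifiable space lacks a continuous right-division in general, so this step is nontrivial. I expect to close the gap by exploiting $H\cong\mathbb{R}$ together with the classification of homogeneous connected $1$-manifolds, ruling out the alternatives (chiefly $C\cong S^1$) by showing that no proper open connected $p,q$-stable subspace of such a $C$ can be homeomorphic to $\mathbb{R}$ under any compatible rectifiable structure.
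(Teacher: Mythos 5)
Your necessity argument coincides with the paper's: take the component $C$ of $e$, note it has more than one point by homogeneity, apply Lemma~\ref{l3} to get it open and $p,q$ stable, and apply Lemma~\ref{l6} to get $C\cong\mathbb{R}$. The problem is the sufficiency direction. You correctly reduce everything to the equality $C=H$, but you do not prove it --- you only announce a plan (``I expect to close the gap by exploiting $H\cong\mathbb{R}$ together with the classification of homogeneous connected $1$-manifolds''). That is a genuine gap, and the plan as sketched is shakier than you suggest: without second countability the classification of connected Hausdorff $1$-manifolds also contains the long line and long ray, so ``chiefly $C\cong S^{1}$'' understates what must be excluded (the long line is homogeneous; one would have to invoke, say, Gul$'$ko's theorem \cite{G1996} that a rectifiable space of countable $\pi$-character is metrizable to kill it); and the one case you do single out --- ruling out a proper open $p,q$ stable arc in a circle component under an \emph{arbitrary} rectification --- is exactly the step left unproven, with no indication of how it would go.

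In fact the gap closes by an argument far more elementary than any classification: \emph{a nonempty open $p,q$ stable subset of a rectifiable space is automatically closed.} First, $e=q(h,h)\in q(H,H)\subseteq H$ for any $h\in H$. Now let $x\in\overline{H}$. The map $y\mapsto q(y,x)$ is continuous and sends $x$ to $q(x,x)=e\in H$, so there is a neighborhood $U$ of $x$ with $q(y,x)\in H$ for every $y\in U$; choosing $h\in U\cap H$ and using the identity $p(h,q(h,x))=x$ from Theorem~\ref{t9}, we get $x\in p(H,H)\subseteq H$. Thus $H$ is clopen and connected, hence $H=C$; since $L_{x}\colon y\mapsto p(x,y)$ is a homeomorphism of $G$ onto itself (with inverse $y\mapsto q(x,y)$), each translate $x\cdot H$ is clopen, connected and contains $x$, hence is precisely the component of $x$. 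The components therefore partition $G$ into clopen copies of $\mathbb{R}$, so $G$ is a topological sum of real lines and is orderable by \cite{HH}, exactly as at the end of the proof of Theorem~\ref{t2}. For comparison, the paper's own sufficiency proof is a one-sentence appeal to $H$ being open, connected and homeomorphic to $\mathbb{R}$ plus the citation of \cite{HH}; the clopenness fact above is what legitimizes that sentence, so your instinct that the step ``$H$ closed in $C$'' is the nontrivial point was sound --- but your proposal, as written, does not establish it.
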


\begin{proof}
Necessity. Suppose that $(G, \tau)$ is orderable as a topological space. Then the connected component $C$ containing the identity of $G$ is open and $p, q$ stable by Lemma~\ref{l3}. Then $C$ is homeomorphic to the space $\mathbb{R}$ of the real numbers by Lemma~\ref{l6}.

Sufficiency. It is well known that a non-trivial connect space is topological sum of real lines if and only if it is locally separable, metrizable, suborderable. Since $H$ is an open connected subspace topologically isomorphic with $\mathbb{R}$, $G$ is the topological sum of real lines, and hence $(G, \tau)$ is orderable \cite[Theorem 9]{HH}.
\end{proof}

Let $(X, \leq)$ be a linearly ordered set. A point $x\in X$ is {\it isolated from below} if it is not the supremum of the points strictly below it. Let $\alpha$ be a cardinal number. If $x$ is not isolated from below, then $x$ has {\it cofinality $\alpha$ from below} if $x$ is the supremum of the set of $\alpha$ points strictly below it, and if $\alpha$ is the smallest cardinal with this property.

The concepts of ``{\it isolated from above}'' and ``{\it cofinality from above}''
 are defined dually.

\begin{lemma}\label{l5}
Let a rectifiable space $G$ be orderable. If the right neutral
element $e$ of $G$ is isolated neither from above nor from below, then its
cofinality from above equals its cofinality from below.
\end{lemma}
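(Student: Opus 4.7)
The plan is to argue by contradiction. Write $\alpha$ for the cofinality of $e$ from below and $\beta$ for the cofinality from above; assume $\alpha \neq \beta$ and, by symmetry, take $\alpha < \beta$. Since $e$ is non-isolated and a rectifiable space is homogeneous, no point of $G$ is isolated. In a LOTS the character at a non-isolated point equals the maximum of its two one-sided cofinalities, so $\chi(e, G) = \max(\alpha, \beta) = \beta$, and homogeneity propagates this to $\chi(x, G) = \beta$ for every $x \in G$.

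Next I would unpack the nested-base machinery. Every GO-space is biradial, so Lemma~\ref{l1} gives open $\pi$-nestedness of $G$ at each point, and Lemma~\ref{l0} then upgrades this (using that $G$ is rectifiable) to $G$ being open nested. With the uniform character $\beta$ in hand, Lemma~\ref{l2} applies with $\kappa = \beta$ and yields the decisive consequence: the intersection of any family of fewer than $\beta$ open subsets of $G$ is again open.

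To extract the contradiction, fix a strictly increasing cofinal net $(a_\xi)_{\xi < \alpha}$ in $(-\infty, e)$ with $\sup_\xi a_\xi = e$ and pick any $b > e$. The open intervals $U_\xi := (a_\xi, b)$ are only $\alpha < \beta$ in number, so $U := \bigcap_{\xi < \alpha} U_\xi$ is open; a direct order-theoretic check identifies $U$ with $[e, b)$. Thus $[e, b)$ is an open neighborhood of $e$ disjoint from $(-\infty, e)$, contradicting the assumption that $e$ is not isolated from below, and we conclude $\alpha = \beta$.

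I expect the principal subtlety to lie in verifying the hypothesis of Lemma~\ref{l2}: that lemma demands the character of $G$ be a common cardinal $\kappa$ at every point, and it is precisely homogeneity of the rectifiable structure that supplies this uniformity. Once this bridge between the algebraic hypothesis and the nested-base machinery is in place, the contradiction itself is purely order-theoretic.
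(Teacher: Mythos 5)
Your proof is correct, and it takes a genuinely different route from the paper's. The paper argues directly with the rectifiable operations: assuming the downward cofinality $\alpha$ is strictly smaller than the upward one, it builds interval neighborhoods $U_{\beta}=(a_{\beta},b_{\beta})$ of $e$ for $\beta<\alpha$ with $q(U_{\beta+1},U_{\beta+1})\subset U_{\beta}$ and $a_{\beta}\rightarrow e$; since the upward cofinality exceeds $\alpha$, the intersection $\bigcap_{\beta<\alpha}U_{\beta}$ contains an interval $[e,a]$ with $a\neq e$, and for $b\in(e,a)$ the cancellation identity $a_{\beta}=q(b,b\cdot a_{\beta})$ forces $b\cdot a_{\beta}\notin U_{\beta+1}\supset[e,a]$, contradicting $b\cdot a_{\beta}\rightarrow b\cdot e=b$ by continuity of $p$. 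So there rectifiability enters through the joint continuity of $p$ and $q$. You instead use rectifiability only indirectly and twice: through homogeneity, to make the character constant and equal to $\max(\alpha,\beta)=\beta$ via the standard LOTS computation, and through Lemma~\ref{l0}, to upgrade the open $\pi$-nestedness supplied by biradiality of GO-spaces (Lemma~\ref{l1}) to open nestedness; then Lemma~\ref{l2} makes intersections of fewer than $\beta$ open sets open, and the purely order-theoretic identity $\bigcap_{\xi<\alpha}(a_{\xi},b)=[e,b)$ produces an open half-interval, which (extracting a basic interval $(c,d)\ni e$ inside $[e,b)$, so that $(c,e)=\emptyset$) contradicts non-isolation from below. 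Your reduction to the case $\alpha<\beta$ by order reversal is legitimate, since the reversed order induces the same topology, and you correctly identify and resolve the one delicate hypothesis: Lemma~\ref{l2} requires the same character $\kappa$ at every point, which homogeneity of rectifiable spaces supplies (in fact only the lower bound $\chi(e,G)\geq\beta>\alpha$ is needed, so the exact character formula is not essential). In exchange for leaning on the nest machinery, your argument avoids any manipulation of $p$ and $q$ and actually proves something slightly more general: in any homogeneous, open-nested LOTS the two one-sided cofinalities agree at each point isolated from neither side; the paper's argument, by contrast, is self-contained, using nothing beyond the definition of a rectifiable space and continuity.
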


\begin{proof}
Let the cofinality of $e$ from below be $\alpha$, while its cofinality
from above is strictly greater. Suppose that $\{U_{\beta}: \beta<\alpha\}$ is a system of neighborhoods
of $e$ satisfying the following conditions (1)-(3):

(1) $U_{\beta}=(a_{\beta}, b_{\beta})$ for some points $a_{\beta}, b_{\beta}\in G;$

(2) $q(U_{\beta+1}, U_{\beta+1})\subset U_{\beta}$ for each $\beta<\alpha$;

(3) $\lim a_{\beta}=e$.

Let $U=\bigcap_{\beta<\alpha} U_{\beta}$. Obviously, $U$ contains an interval $[e, a]$ with $e\neq a$. Choose $b\in (e, a)$. For each $\beta<\alpha$, since $a_{\beta}\not\in U_{\beta}$, we have $b\cdot a_{\beta}\not\in U_{\beta+1}$ (if not, suppose that $b\cdot a_{\beta}\in U_{\beta+1}$, and hence $a_{\beta}=q(b, b\cdot a_{\beta})\in q(U_{\beta+1}, U_{\beta+1})\subset U_{\beta}$ by (2), which is a contradiction.). Thus $b\cdot a_{\beta}\not\in [e, a]$ for each $\beta<\alpha$, which implies that $b$ is not in the closure of $\{b\cdot a_{\beta}: \beta<\alpha\}$. However, it follows from (3) that $b\cdot a_{\beta}\rightarrow b\cdot e=b$, which is a contradiction.
\end{proof}

\begin{theorem}\label{t3}
Let a rectifiable space $G$ be orderable. Then there is a
totally ordered base for the neighborhoods of the right neutral element of $G$.
\end{theorem}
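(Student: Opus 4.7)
The plan is to analyze how the right neutral element $e$ sits in the linear order and to split into cases. If $e$ is isolated in the order (both from above and below), then $\{e\}$ is itself a local base and is trivially totally ordered. If $e$ is isolated from exactly one side, say from above, then fix a point $b$ with $(e,b)\cap G=\emptyset$; the sets $(a_\beta,b)$, where $\{a_\beta:\beta<\alpha\}$ is a strictly increasing cofinal-from-below sequence ending at $e$, form a chain under reverse inclusion (since $\beta_1<\beta_2$ forces $a_{\beta_1}<a_{\beta_2}$, hence $(a_{\beta_1},b)\supsetneq(a_{\beta_2},b)$) and are easily seen to form a local base.

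The essential case is when $e$ is isolated from neither side. Here I would invoke Lemma~\ref{l5}, which says that the cofinality $\alpha$ of $e$ from below equals its cofinality from above. Because $\alpha$ is a cofinality, it is a regular cardinal, so one can extract a strictly increasing sequence $\{a_\beta:\beta<\alpha\}$ in $G$ with $\sup a_\beta=e$, and dually a strictly decreasing sequence $\{b_\beta:\beta<\alpha\}$ with $\inf b_\beta=e$. Set $U_\beta=(a_\beta,b_\beta)$. For $\beta_1<\beta_2$ one has $a_{\beta_1}<a_{\beta_2}$ and $b_{\beta_2}<b_{\beta_1}$, so $U_{\beta_2}\subset U_{\beta_1}$; thus $\{U_\beta:\beta<\alpha\}$ is totally ordered by reverse inclusion.

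It remains to check that $\{U_\beta\}$ is a base at $e$. Given any open neighborhood $V$ of $e$, $V$ contains a basic interval $(a,b)$ with $a<e<b$. Cofinality from below furnishes some $\beta_1<\alpha$ with $a<a_{\beta_1}$, and coinitiality from above furnishes some $\beta_2<\alpha$ with $b_{\beta_2}<b$; then for any $\beta\geq\max(\beta_1,\beta_2)$ (which exists since $\alpha$ is an ordinal and regular) we have $U_\beta\subset(a,b)\subset V$.

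The main obstacle lies precisely in the symmetric case: the naive base $\{(a_\beta,b_\gamma)\}$ indexed by pairs of ordinals is only downward directed, not linearly ordered, by inclusion. What makes the proof work is exactly the equality of the two cofinalities supplied by Lemma~\ref{l5}, which permits the diagonal pairing of a single ordinal index $\beta$ to both a point on the left and a point on the right. Without that equality, no chain of length $<\max$(cofinalities) could be cofinal.
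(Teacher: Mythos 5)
Your proof is correct and takes essentially the same route as the paper: the paper's own proof is just a two-line sketch (``if $e$ is isolated from above or below the result is obvious; otherwise apply Lemma~\ref{l5}''), and you have supplied exactly the details that sketch leaves out --- the case split on isolation of $e$, the extraction of strictly monotone cofinal sequences of the common regular length $\alpha$, and the diagonal pairing $U_{\beta}=(a_{\beta},b_{\beta})$ that the equality of cofinalities from Lemma~\ref{l5} makes possible. The only microscopic omission is the endpoint case: if $e$ is the maximum (resp.\ minimum) of the order, the point $b$ with $(e,b)\cap G=\emptyset$ need not exist and one should use rays $(a_{\beta},\rightarrow)$ (resp.\ $(\leftarrow,b_{\beta})$) instead, a trivial adjustment.
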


\begin{proof}
If $e$ is isolated from above or below, then the result is obvious. If $e$ is isolated neither from above nor below, then it is easy to see that the result is true by
Lemma~\ref{l5}.
\end{proof}

\begin{lemma}\label{l7}
Let $G$ be a rectifiable space in which the right neutral element has a totally ordered local base. Then $G$ satisfies one of the following conditions (1)-(2):
\begin{enumerate}
\item $G$ is metrizable;

\item $G$ has a totally ordered local base at the right neutral consisting of clopen and $p, q$ stable subspaces.
\end{enumerate}
\end{lemma}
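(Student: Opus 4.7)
The plan is to split on $\kappa=\chi(e,G)$. If $\kappa=\omega$, then $e$ has a countable local base; by homogeneity of rectifiable spaces $G$ is first-countable. Every first-countable space is bisequential (any countable decreasing local base at a point is already synchronous with every filterbase accumulating to that point), so Lemma~\ref{l4} gives that $G$ is metrizable, yielding (1).

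The substantive case is $\kappa>\omega$. Arrange the totally ordered local base at $e$ as a strictly decreasing chain $\{U_\alpha:\alpha<\kappa\}$. Being an open nest that forms a local base at $e$, it witnesses that $G$ is open nested at $e$, and by Lemma~\ref{l0} together with homogeneity $G$ is open nested at every point. Since $\chi(x,G)=\kappa$ for every $x\in G$ by homogeneity, Lemma~\ref{l2} shows that every intersection of fewer than $\kappa$ open sets is open; in particular $G$ is a regular $P$-space. A standard bootstrap (use regularity to shrink, then take countable intersections of the closures) then yields a local base of clopen sets at each point of $G$.

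I would then construct the base required by (2) by transfinite recursion. At stage $\alpha<\kappa$, assume clopen, $p,q$-stable neighborhoods $\{V_\beta:\beta<\alpha\}$ of $e$ have been chosen in a decreasing chain. Set $W_\alpha=U_\alpha\cap\bigcap_{\beta<\alpha}V_\beta$; this is open by Lemma~\ref{l2}, as $|\alpha|<\kappa$. Using $p(e,e)=q(e,e)=e$, the continuity of $p$ and $q$, and the clopen base property, pick recursively clopen neighborhoods $W_\alpha\supseteq T_0\supseteq T_1\supseteq\cdots$ of $e$ with $p(T_{n+1},T_{n+1})\cup q(T_{n+1},T_{n+1})\subset T_n$, and define $V_\alpha=\bigcap_n T_n$. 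Then $V_\alpha$ is clopen (countable intersection of clopen sets in a $P$-space), contained in $W_\alpha\subset U_\alpha$, and $p,q$-stable since any two of its points lie in every $T_{n+1}$, whence their $p$- and $q$-images lie in every $T_n$, hence in $V_\alpha$. The resulting chain $\{V_\alpha:\alpha<\kappa\}$ refines $\{U_\alpha\}$, so it is a local base, and it is totally ordered by inclusion with all the required properties.

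The main obstacle is arranging clopen-ness and $p,q$-stability simultaneously at every inductive stage; what makes this possible is that the $P$-space conclusion from Lemma~\ref{l2} guarantees that countable diagonalizations (needed to close up under $p$ and $q$) preserve both openness and closedness, while Lemma~\ref{l2} itself controls the transfinite part of the recursion.
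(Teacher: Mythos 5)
Your proof is correct and takes essentially the same route as the paper's: rule out the countable-character case via bisequentiality and Lemma~\ref{l4}, use Lemmas~\ref{l0} and~\ref{l2} to conclude $G$ is a $P$-space, diagonalize countably with respect to $p$ and $q$ inside each member of the nest so that the countable intersection is open and $p,q$-stable, and arrange the resulting sets into a totally ordered local base by transfinite recursion. If anything you are more careful than the paper, which asserts without argument that its open $p,q$-stable sets $B_{\beta}$ are closed and compresses the nesting step into ``by induction, it is easy to see''; your clopen-base bootstrap in the regular $P$-space and the explicit recursion with $W_{\alpha}=U_{\alpha}\cap\bigcap_{\beta<\alpha}V_{\beta}$ close both of those gaps.
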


\begin{proof}
Let $G$ be non-metrizable. Suppose that $\{U_{\beta}: \beta<\alpha\}$ is a local base
at $e$, where $\alpha>\omega$. Fix an arbitrary $U_{\beta}$. Then there exists a countable family $\{U_{\beta n}: n\in\mathbb{N}\}$ of neighborhoods at $e$ such that $U_{\beta (n+1)}\cdot U_{\beta (n+1)}\subset U_{\beta n}\subset U_{\beta}$ and $q(U_{\beta (n+1)}, U_{\beta (n+1)})\subset U_{\beta n}$ for each $n\in\mathbb{N}$. Indeed, let $U_{\beta 1}=U_{\beta}$. Since $U_{\beta 1}$ is an open neighborhood of $e$, there exists an open neighborhood $U_{\beta 2}$ of $e$ such that $U_{\beta 2}\cdot U_{\beta 2}\subset U_{\beta 1}$ and $q(U_{\beta 2}, U_{\beta 2})\subset U_{\beta 1}$. Now suppose for each $k\leq n$ has been defined well. For $k=n+1$, since $U_{\beta n}$ is an open neighborhood of $e$, there exists an open neighborhood $U_{\beta (n+1)}$ of $e$ such that $U_{\beta (n+1)}\cdot U_{\beta (n+1)}\subset U_{\beta n}$ and $q(U_{\beta (n+1)}, U_{\beta (n+1)})\subset U_{\beta n}$. By induction we have defined the countable family $\{U_{\beta n}: n\in\mathbb{N}\}$ of neighborhoods at $e$.
Let $B_{\beta}=\bigcap_{n\in\mathbb{N}} U_{\beta n}$. Then $B_{\beta}$ is open and $p, q$ stable of $G$, and hence it is closed. By induction, it is easy to see that we can arrange a family of clopen and $p, q$ stable subspaces $\{B_{\gamma}: \gamma\in\alpha\}$ which is totally ordered local base.
\end{proof}

By Theorem~\ref{t3} and Lemma~\ref{l7}, we have the following theorem.

\begin{theorem}\label{t5}
Let a rectifiable space $G$ be orderable. Then $G$ satisfies one of the following conditions (1)-(2):
\begin{enumerate}
\item $G$ is metrizable;

\item $G$ has a totally ordered local base at the right neutral consisting of clopen and $p, q$ stable subspaces of $G$.
\end{enumerate}
\end{theorem}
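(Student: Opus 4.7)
The plan is to derive this statement as an immediate concatenation of the two preceding results, which together already carry all the real content. First I would apply Theorem~\ref{t3} to the orderable rectifiable space $G$: this produces a totally ordered local base at the right neutral element $e$. The only nontrivial input here is Lemma~\ref{l5}, which forces the cofinality of $e$ from above to equal its cofinality from below whenever $e$ is two-sided non-isolated, so that a cofinal family of order-intervals about $e$ can be chosen in a linearly ordered (by $\subseteq$) manner; the other two cases (where $e$ is isolated on one side) are trivial.

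Second, with such a totally ordered local base $\{U_\beta : \beta < \alpha\}$ at $e$ in hand, I would feed it into Lemma~\ref{l7}. That lemma does the actual work of the theorem: either $G$ is metrizable, giving alternative (1), or else for each $\beta$ one inductively selects a countable descending sequence $\{U_{\beta n}\}_{n\in\mathbb{N}}$ of open neighborhoods of $e$ with $U_{\beta(n+1)} \cdot U_{\beta(n+1)} \subset U_{\beta n}$ and $q(U_{\beta(n+1)}, U_{\beta(n+1)}) \subset U_{\beta n}$; the set $B_\beta = \bigcap_{n\in\mathbb{N}} U_{\beta n}$ is then open (being a countable intersection of open sets in a P-space, via Theorem~\ref{t2}), closed, and $p,q$-stable, and the family $\{B_\beta : \beta < \alpha\}$ inherits the total order from the $U_\beta$'s and still forms a local base at $e$. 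This gives alternative (2).

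Since both Theorem~\ref{t3} and Lemma~\ref{l7} are already established, there is essentially no obstacle; the proof is a one-line citation of these two results in sequence. The only thing I would double-check is hypothesis compatibility: Theorem~\ref{t3} outputs precisely the notion of ``totally ordered local base at the right neutral element'' that Lemma~\ref{l7} takes as input, so the two results glue without any auxiliary argument. Accordingly, the proof I would write is simply: apply Theorem~\ref{t3} to obtain a totally ordered local base at $e$, then invoke Lemma~\ref{l7} on this base to obtain the dichotomy in (1)--(2).
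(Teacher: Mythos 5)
Your proposal matches the paper exactly: the paper derives Theorem~\ref{t5} in one line by combining Theorem~\ref{t3} (which yields a totally ordered local base at the right neutral element) with Lemma~\ref{l7} (which provides the metrizable/clopen-stable-base dichotomy). Your additional remarks on the internal workings of those two results are consistent with their proofs, so there is nothing to correct.
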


{\bf Remark} From Theorem~\ref{t5} it follows that any non-metrizable, orderable rectifiable space is a $P$-space, and thus we obtain Theorem~\ref{t4} again.

However, the following question is still open.

\begin{question}
Let $G$ be a compact, totally disconnected rectifiable space. Does there exist a cardinal number $\tau$ such that $G$, regard as a topological space, is homeomorphic with the space $\{0, 1\}^{\tau}$?  (that is, is every compact, totally disconnected rectifiable space a topological group?)
\end{question}

\bigskip

\section{compactification of rectifiable spaces}
In this section, we assume that all spaces are Tychonoff. Firstly, we give some lemmas.

\begin{lemma}({\bf Henriksen and Isbell} \cite{H1958})\label{l12}
 A space
$X$ is of countable type\footnote{Recall that a space $X$ is of {\it countable type} \cite{E} if every
compact subspace $F$ of $X$ is contained in a compact subspace
$K\subset X$ with a countable outer base of open neighborhoods in $X$.} if and only if its remainder in any (in
some) compactification of $X$ is Lindel\"{o}f.
\end{lemma}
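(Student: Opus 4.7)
The plan is to prove Lemma~\ref{l12} by establishing two implications: \textbf{(forward)} if $X$ is of countable type then the remainder of $X$ in \emph{every} Hausdorff compactification is Lindel\"of, and \textbf{(backward)} if the remainder is Lindel\"of in \emph{some} compactification then $X$ is of countable type. Since the implication (every $\Rightarrow$ some) is trivial, these two together give the full three-way equivalence.

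For the forward direction, I would fix a compactification $bX$ with remainder $Y = bX \setminus X$, and let $\mathcal{V}$ be a family of open subsets of $bX$ whose union covers $Y$. Then $C := bX \setminus \bigcup \mathcal{V}$ is compact and contained in $X$, so by countable type I can enlarge $C$ to a compact $K \subset X$ carrying a countable outer base $\{O_n\}$ of open neighborhoods in $X$. A short argument using that $X$ is open in $bX$ shows $\{O_n\}$ is also an outer base for $K$ in $bX$. Then for each $n$, the family $\mathcal{V} \cup \{O_n\}$ covers $bX$ (since $C \subset K \subset O_n$), so compactness of $bX$ extracts a finite $\mathcal{V}_n \subset \mathcal{V}$ with $bX \setminus O_n \subset \bigcup \mathcal{V}_n$. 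Because $K \cap Y = \emptyset$ and $\{O_n\}$ is a neighborhood base of $K$ in $bX$, one checks $Y \subset \bigcup_n (bX \setminus O_n)$, whence $\bigcup_n \mathcal{V}_n$ is the desired countable subcover of $Y$.

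For the backward direction, let $F \subset X$ be compact; I must produce a compact $K \supset F$ with $K \subset X$ and a countable outer base in $X$. Using normality of the compact Hausdorff space $bX$, for each $y \in Y$ I would choose an open $O_y \subset bX$ with $F \subset O_y$ and $y \notin \overline{O_y}$. The sets $\{bX \setminus \overline{O_y}\}$ open-cover the Lindel\"of space $Y$, so a countable subcover indexed as $\{O_n\}_{n\in\mathbb{N}}$ can be extracted. Setting $K = \bigcap_n \overline{O_n}$ produces a compact set with $F \subset K$ and, since $Y \subset \bigcup_n (bX \setminus \overline{O_n})$, with $K \subset X$. For the outer base, set $P_n = O_1 \cap \cdots \cap O_n$: given any open $W \subset X$ with $K \subset W$, the decreasing compact sets $(\overline{O_1} \cap \cdots \cap \overline{O_n}) \setminus W$ have empty intersection $K \setminus W = \emptyset$, so some $P_n \subset \overline{O_1}\cap\cdots\cap\overline{O_n} \subset W$. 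Hence $\{P_n \cap X\}$ is a countable outer base of $K$ in $X$.

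The main obstacle I anticipate is careful bookkeeping between the two topologies on $X$ and on $bX$: in the backward direction one must check that the outer base $\{P_n \cap X\}$, consisting of open subsets of $X$, truly dominates every open neighborhood of $K$ taken in $X$; in the forward direction one must lift the outer base of $K$ from $X$ to $bX$. Both steps are clean but rest squarely on the fact that $X$ is open in $bX$, so that open subsets of $X$ are precisely the open subsets of $bX$ lying inside $X$.
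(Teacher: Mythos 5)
The paper offers no proof of this lemma --- it is quoted from Henriksen and Isbell \cite{H1958} --- so your attempt must be measured against the classical argument, whose overall architecture (forward: extract a countable subcover using an outer base of an enlarged compact set; backward: use Lindel\"ofness of $Y$ to trap $F$ inside a compact subset of $X$ with countable character) you have correctly reproduced. However, there is a genuine gap, and it is the very fact you declare the proof ``rests squarely on'': the claim that $X$ is open in $bX$, so that open subsets of $X$ are precisely the open subsets of $bX$ lying inside $X$. This is \emph{false} in general; $X$ is open in $bX$ if and only if $X$ is locally compact, and the applications in this paper are precisely to non-locally compact (indeed nowhere locally compact) spaces. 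The error infects both directions. In the forward direction the sets $O_n$, being open only in $X$, are not open in $bX$, so ``$\{O_n\}$ is also an outer base for $K$ in $bX$'' is not even well-formed; the correct repair is to choose $\tilde O_n$ open in $bX$ with $\tilde O_n\cap X=O_n$, note that density of $X$ in $bX$ gives $\overline{\tilde O_n}=\overline{O_n}$ (closures in $bX$), and first shrink the base by regularity so that $\overline{O_{n+1}}\cap X\subset O_n$; only then does one verify that $\{\tilde O_n\}$ is an outer base of $K$ in $bX$ and the rest of your subcover argument goes through.

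In the backward direction the gap is sharper: your set $K=\bigcap_n\overline{O_n}$ need \emph{not} be contained in $P_n=O_1\cap\cdots\cap O_n$, because $K$ can pick up boundary points lying in every closure but in no $O_i$. For instance, with $F=\{0\}$ in $[0,1]$, taking $O_1=[0,1/2)$ and $O_n=[0,1/2)\cup(1/2,\,1/2+1/n)$ gives $K=[0,1/2]$, which contains the point $1/2\notin P_n$ for every $n$; thus your candidate sets $P_n\cap X$ are not neighborhoods of $K$ at all, and no outer base has been produced. (A second, smaller defect in the same step: for $W$ open merely in $X$, the sets $(\overline{O_1}\cap\cdots\cap\overline{O_n})\setminus W$ need not be compact; one must pass to $W'$ open in $bX$ with $W'\cap X=W$, which is harmless since $K\subset X$ forces $K\subset W'$.) The standard fix is to interleave using normality of the compact Hausdorff space $bX$: construct open sets $W_n$ with $F\subset W_{n+1}\subset\overline{W_{n+1}}\subset W_n\cap O_{n+1}$, and redefine $K=\bigcap_n W_n=\bigcap_n\overline{W_n}$. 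This $K$ is a compact $G_\delta$-set containing $F$, it lies in $X$ because $\overline{W_n}\subset\overline{O_n}$ and your covering of $Y$ shows $\bigcap_n\overline{O_n}\cap Y=\emptyset$, and now $K\subset W_n$ by construction, so your decreasing-compacta argument applied to $\overline{W_n}\setminus W'$ legitimately yields that $\{W_n\cap X\}$ is a countable outer base of $K$ in $X$. With these two repairs --- extensions plus density in the forward direction, interleaved shrinking in the backward direction --- your proof becomes correct and is essentially the classical one.
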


\begin{lemma}\label{l14}\cite{A3}
If $G$ is a topological group, and some remainder of $G$ is not
pseudocompact, then every remainder of $G$ is Lindel\"{o}f.
\end{lemma}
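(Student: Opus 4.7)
The plan is to follow the standard route for this type of remainder dichotomy in topological groups, namely: reduce non-pseudocompactness of a remainder to the statement that $G$ is of countable type, and then invoke Henriksen--Isbell (Lemma~\ref{l12}) to conclude that every remainder is Lindel\"of. So the target is to prove the implication: if some compactification $bG$ has a remainder $Y = bG\setminus G$ that is not pseudocompact, then $G$ is of countable type.

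First I would unpack non-pseudocompactness concretely: there exists a countably infinite, discrete (in fact locally finite in $Y$) family $\mathscr{V}=\{V_n:n\in\mathbb{N}\}$ of non-empty open subsets of $Y$. Lift each $V_n$ to an open set $W_n$ of $bG$ with $W_n\cap Y=V_n$, and consider the set $F$ of cluster points in $bG$ of the sequence $(W_n)$. By local finiteness of $\mathscr{V}$ in $Y$, no point of $Y$ is such a cluster point, so $F\subset G$; and $F$ is clearly closed in $bG$, hence compact. The key step is then to use the topological group structure on $G$ (left translations are homeomorphisms of $bG\setminus\{\infty\text{-behavior}\}$ captured through $G$-action on itself, plus the homogeneity inherited from the group) to show that $F$ admits a countable outer base in $bG$ (equivalently, has a countable base of neighborhoods in $G$). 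Once $F$ is a compact set of countable outer character in $G$, homogeneity of $G$ propagates this to every point, showing $G$ is of countable type.

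The main obstacle is the middle step: producing a compact $F\subset G$ that actually has a countable outer base. A priori the cluster set is only compact. The trick, due to Arhangel'ski\v\i, is to use the uniformity coming from the group: for each $n$ choose $x_n \in W_n$ and observe that the accumulation behaviour of $\{x_n\}$ together with the uniformly continuous translations around $e$ forces the family $\{W_n\}$ to have a neighborhood trace at $F$ controlled by a countable family of standard neighborhoods. One formalises this by choosing a countable family of open symmetric neighborhoods $\{U_k\}$ of $e$ with $U_{k+1}U_{k+1}\subset U_k$, shrinking to the diagonal, and arguing that $\{FU_k : k\in\mathbb{N}\}$ is an outer base for $F$; failure would produce an accumulation point of $\{x_n\}$ in $Y$, contradicting local finiteness of $\mathscr{V}$.

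Once $G$ is known to be of countable type, Lemma~\ref{l12} immediately yields that the remainder of $G$ in every compactification is Lindel\"of, which is the desired conclusion. I expect no serious difficulty in this last step, nor in the initial combinatorial reformulation; the whole weight of the argument sits in the uniform-neighborhood construction of the countable outer base, which is where the topological group hypothesis is genuinely used and cannot be replaced by mere homogeneity.
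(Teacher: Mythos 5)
The paper itself offers no proof of this lemma: it is imported verbatim from Arhangel'ski\v\i's paper \cite{A3}, so your attempt can only be judged against the known argument and on its own terms. The outer layers of your plan are sound and do match the shape of Arhangel'ski\v\i's result: non-pseudocompactness of $Y$ yields a countably infinite family $\{V_n\}$ of nonempty open sets locally finite in $Y$; if you pick the points $x_n$ inside $V_n\subseteq Y$ (not merely in the lifts $W_n$, where the claim ``no point of $Y$ is a cluster point'' would need density of $Y$, i.e.\ nowhere local compactness of $G$ -- true here, but you never note it), then the cluster set $F$ of $(x_n)$ is indeed a nonempty compact subset of $G$; and once $G$ is known to be of countable type, Lemma~\ref{l12} does give that the remainder in \emph{every} compactification is Lindel\"{o}f, which is exactly the conclusion.

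The genuine gap is your middle step, and it is not a technicality: it is the whole theorem. You posit a countable family $\{U_k\}$ of symmetric neighborhoods of $e$ with $U_{k+1}U_{k+1}\subseteq U_k$, ``shrinking to the diagonal,'' such that $\{FU_k\}$ is an outer base for $F$. No such countable family can be chosen abstractly in a non-metrizable group: if this mechanism worked for an arbitrary compact $F$, every topological group would be of countable type, which is false -- in a non-metrizable Lindel\"{o}f P-group compact sets are finite and no point has countable character (consistently with the lemma, all remainders of such groups are pseudocompact, so the hypothesis never triggers). Hence the $U_k$ must be manufactured from the data $\{V_n\}$, and your only tether to that data -- ``failure would produce an accumulation point of $\{x_n\}$ in $Y$'' -- already presupposes $\bigcap_k \overline{FU_k}=F$, i.e.\ that $F$ essentially has countable character, which is the very point at issue; the sketch is circular where the weight sits. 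Note also that no purely compactification-theoretic argument can rescue it: cluster sets of sequences in compacta need not have countable character (a sequence in $I^{\omega_1}$ converging to a point has a singleton cluster set of character $\omega_1$), so a genuinely group-theoretic construction is unavoidable. Finally, even granting a compact $F$ of countable character in $G$, translating $F$ around only shows that $G$ is of \emph{point}-countable type (feathered); the upgrade to countable type is not ``homogeneity propagates,'' but Arhangel'ski\v\i's theorem that a feathered topological group is a paracompact $p$-space. As it stands, your proposal correctly frames the reduction (non-pseudocompact remainder $\Rightarrow$ countable type $\Rightarrow$ Henriksen--Isbell) but leaves its core step unproved and supported by a false general principle.
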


\begin{lemma}\label{l13}
Let a GO-space $G$ be rectifiable, and $Y$ be the remainder of some Hausdorff compactification of $G$. If $Y$ is Lindel\"{o}f, then $G$ is metrizable.
\end{lemma}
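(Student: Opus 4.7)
The plan is to argue by contradiction, combining Theorem~\ref{t2} with Lemma~\ref{l12}. Assume $G$ is not metrizable. Since $G$ is a rectifiable GO-space, Theorem~\ref{t2} forces $G$ to be a totally disconnected $P$-space. On the other hand, the Lindel\"{o}fness of $Y$ and Lemma~\ref{l12} give that $G$ is of countable type. In particular, fixing any point $x\in G$, the compact singleton $\{x\}$ is contained in some compact $K\subset G$ admitting a countable outer base $\{V_{n}:n\in\mathbb{N}\}$ of open neighborhoods in $G$.

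The first key observation is that every countably compact subset of a $P$-space is finite. If $\{a_{n}:n\in\mathbb{N}\}$ were an infinite countable subset, then for each $y\in G$ the set $\{a_{n}:a_{n}\neq y\}$ would be a countable union of closed singletons, hence an $F_{\sigma}$; its complement is a countable intersection of open sets, which is open in the $P$-space $G$. This says $\{a_{n}\}$ has no accumulation point in $G$, contradicting countable compactness. In particular, our set $K$ is finite.

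Next, I would exploit the outer-base condition together with the $T_{1}$ axiom to show $\bigcap_{n\in\mathbb{N}}V_{n}=K$. Indeed, for any $y\notin K$ the open set $G\setminus\{y\}$ contains $K$, so some $V_{n}\subset G\setminus\{y\}$, whence $y\notin V_{n}$. Because $G$ is a $P$-space, the countable intersection $\bigcap_{n}V_{n}$ is open, so $K$ itself is open. Being a finite regular $T_{1}$-space, the subspace $K$ is discrete; therefore $\{x\}$ is open in $K$ and hence in $G$. Homogeneity of the rectifiable space $G$ then forces every point to be isolated, so $G$ is discrete---in particular metrizable---contradicting our standing assumption.

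The only slightly delicate move is extracting an open singleton from the outer base of a merely finite (not singleton) set $K$, but this is forced once the $P$-space hypothesis and $T_{1}$-separation are invoked. The real content is packaged into Theorem~\ref{t2} and Lemma~\ref{l12}; everything after that is soft set-theoretic topology.
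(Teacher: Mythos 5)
Your proof is correct, but it takes a genuinely different route from the paper's. The paper's proof of Lemma~\ref{l13} is direct rather than by contradiction: after invoking Lemma~\ref{l12} to get countable type (as you do), it splits on whether $G$ contains an infinite countably compact subset. If yes, metrizability follows immediately from Theorem~\ref{t10}(6); if no, the compact set $K$ with countable outer base is finite, whence $e$ has a countable local base, and metrizability follows from first countability via Lemma~\ref{l4} (bisequential rectifiable spaces are metrizable). You instead assume non-metrizability, apply Theorem~\ref{t2} to land in the totally disconnected $P$-space case, and then run a purely set-theoretic argument: countably compact subsets of a $T_{1}$ $P$-space are finite, the $P$-space property turns the countable outer base into the equality that $K=\bigcap_{n}V_{n}$ is open, finiteness plus $T_{1}$ makes $\{x\}$ open, and homogeneity forces $G$ discrete --- contradicting non-metrizability. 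All of your steps check out (in particular the outer-base argument showing $\bigcap_{n}V_{n}=K$, and the extraction of an isolated point from the finite open $K$, are sound). What your approach buys is the complete elimination of the bisequentiality machinery: you never need Lemma~\ref{l4} or Theorem~\ref{t10}, only Theorem~\ref{t2}, Lemma~\ref{l12}, and soft topology, and in the non-metrizable case you prove the stronger intermediate fact that countable type would force $G$ to be discrete. What the paper's approach buys is reusability: its Case~2 argument establishes first countability of $G$ at $e$ from countable type alone (without passing through Theorem~\ref{t2}), a step that is cited again in the proofs of Theorem~\ref{t11} and Theorem~\ref{t14}. Note also that the paper's route is not really independent of Theorem~\ref{t2} either, since Theorem~\ref{t10} is itself deduced from it, so the logical dependencies of the two proofs largely coincide; yours simply makes the $P$-space dichotomy explicit and elementary.
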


\begin{proof}
Since $Y$ is Lindel\"{o}f, it follows from Lemma~\ref{l12} that $G$ is of countable type.

Case 1: There exists some countably compact subset $F$ of $G$ that is infinite.

Then $G$ is metrizable by Theorem~\ref{t10}.

Case 2: There is no countably compact subset of $G$ that is infinite.

Since $G$ is of countable type, there exists a compact subset $F$ such that the right neutral element $e\in F$ and $F$ has a countable neighborhood base. The compact subset $F$ is finite since there does not exist any compact subset of $G$ that is infinite. Therefore, the point $e$ has a countable neighborhood base, and thus $G$ is metrizable by Lemma~\ref{l4}.
\end{proof}

\begin{theorem}\label{t11}
Let a GO-space $G$ be rectifiable, and $Y$ be the remainder of some Hausdorff compactification of $G$. If $Y$ is of countable pseudocharacter, then one of the following conditions holds:
\begin{enumerate}
\item $G$ is metrizable;

\item $Y$ is countably compact.
\end{enumerate}
\end{theorem}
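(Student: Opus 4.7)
The plan is to prove the contrapositive: assume $Y$ is not countably compact, and deduce that $G$ is metrizable. The failure of countable compactness yields a countable closed discrete subspace $A=\{y_n:n\in\mathbb{N}\}$ of $Y$, and compactness of $bG$ combined with the fact that $A$ has no accumulation point in $Y$ produces an accumulation point $x\in G$ of $A$ in $bG$.

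Suppose for contradiction that $G$ is not metrizable. Theorem~\ref{t2} then gives that $G$ is a totally disconnected $P$-space. Since rectifiability makes $G$ homogeneous and any discrete rectifiable space is metrizable, $G$ has no isolated points, so the $P$-space property (every $G_{\delta}$-set is open) forces no singleton of $G$ to be a $G_{\delta}$-set in $G$. The proof will therefore reduce to exhibiting $\{x\}$ as a $G_{\delta}$-subset of $G$.

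For this I would invoke the countable pseudocharacter of $Y$ at each $y_n$: pick a decreasing sequence $\{V_{n,k}\}_{k\in\mathbb{N}}$ of open subsets of $bG$ with $\bigcap_k V_{n,k}\cap Y=\{y_n\}$, and using regularity and closed discreteness of $A$ in $Y$ refine so that $V_{n,k}\cap A=\{y_n\}$. By density of $G$ in $bG$ select $x_{n,k}\in (V_{n,k}\cap G)\setminus\{x\}$ and put $D=\{x_{n,k}:n,k\in\mathbb{N}\}\subset G$. Since $G$ is a $P$-space, the countable set $D$ is an $F_{\sigma}$ of singletons, hence closed; moreover $D$ is discrete in $G$. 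Consequently $\overline{D}^{bG}\cap G=D$, and as $x\notin D$, establishing $x\in\overline{D}^{bG}$ would yield the contradiction $x\in D$.

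The central obstacle is precisely to force $x\in\overline{D}^{bG}$: the $V_{n,k}$'s are only a pseudobase at $y_n$ in $Y$, not a local base at $y_n$ in $bG$, so a priori the $x_{n,k}$'s need not accumulate near the $y_n$'s in $bG$. I plan to overcome this by sharpening the $V_{n,k}$'s to a countable local base at each $y_n$ in $bG$; this should be possible because $bG$ is compact Hausdorff (so $\chi(y_n,bG)=\psi(y_n,bG)$ whenever pseudocharacter is countable there) and because the $P$-space structure of $G$ makes $\bigcap_k V_{n,k}\cap G$ open in $G$, which one may hope to excise from $\bigcap_k V_{n,k}$ by intersecting with one further open neighborhood of $y_n$ in $bG$. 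Alternatively, fixing an ultrafilter $p$ on $\mathbb{N}$ with $p\text{-}\lim y_n=x$ in $bG$ and replacing $D$ by a diagonally chosen subset indexed by $p$-large sets should yield the same accumulation. Once $x_{n,k}\to y_n$ in $bG$ for every $n$, the cluster behavior of $(y_n)$ at $x$ transfers to $D$, giving $x\in\overline{D}^{bG}$ and hence the sought contradiction.
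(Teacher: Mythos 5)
Your overall architecture is sound and genuinely different from the paper's, but as written it has a real gap at exactly the step you flag as the central obstacle, and your two proposed fixes do not close it. The justification ``$\chi(y_n,bG)=\psi(y_n,bG)$ in a compact Hausdorff space'' is correct but inapplicable: the hypothesis gives countable pseudocharacter of $y_n$ only in $Y$, not in $bG$, so from $\bigcap_k V_{n,k}\cap Y=\{y_n\}$ you only get $\bigcap_k V_{n,k}\subset\{y_n\}\cup G$, and a priori this intersection may contain a large piece of $G$. The ``excision'' idea fails precisely in the bad case: you can cut away $\bigcap_k V_{n,k}\cap G$ with one more neighborhood of $y_n$ only if $y_n\notin\overline{\bigcap_k V_{n,k}\cap G}^{\,bG}$, which is not automatic; and the ultrafilter variant does not address where $\bigcap_k V_{n,k}$ meets $G$ at all. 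The right move is to show the trace is \emph{empty}, and this is provable from ingredients you never invoke: since $Y$ is not countably compact, $G$ cannot be locally compact (otherwise $Y=bG\setminus G$ would be compact). Now choose the $V_{n,k}$ decreasing with $\overline{V_{n,k+1}}\subset V_{n,k}$ by regularity of $bG$; then $Q_n=\bigcap_k V_{n,k}=\bigcap_k\overline{V_{n,k}}$ is compact with $Q_n\cap Y=\{y_n\}$. The set $Q_n\cap G$ is a $G_\delta$ in $G$, hence open in $G$ by the $P$-space property from Theorem~\ref{t2}; being $Q_n\setminus\{y_n\}$, it is also open in the compactum $Q_n$, hence locally compact. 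If it were nonempty, homogeneity of the rectifiable space $G$ would make $G$ locally compact, a contradiction. So $Q_n=\{y_n\}$, and compactness of $bG$ then makes $\{V_{n,k}\}_k$ a local base at $y_n$ in $bG$; with that, your accumulation argument ($x_{n,k}\to y_n$, the $y_n$ cluster at $x\in G$, while the countable set $D$ is closed discrete in the $P$-space $G$, so $x\in\overline{D}^{\,bG}\cap G=D$, contradicting $x\notin D$) goes through. Minor point: your remark about reducing to ``$\{x\}$ is $G_\delta$ in $G$'' is a detour you never use; the closed-discreteness of $D$ is what does the work.

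For comparison, the paper argues quite differently: it applies Lemma~\ref{l14} to split into ``$Y$ Lindel\"{o}f'' (where Lemma~\ref{l13} gives metrizability of $G$ outright) and ``$Y$ pseudocompact''; in the latter case it shows a pseudocompact space of countable pseudocharacter is first countable, and then, if $Y$ is not countably compact, transfers a countable local base at some $y_n$ to a countable $\pi$-base at a point of $G$ by the standard density argument in $bG$, concluding metrizability from Gul$'$ko's theorem that a rectifiable space of countable $\pi$-character is metrizable. The paper thus never touches the $P$-space dichotomy, while your (patched) route never needs Lemma~\ref{l14} or the $\pi$-character metrization theorem --- a trade worth noting, but as submitted your proof is incomplete at its decisive step.
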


\begin{proof}
It follows from Lemma~\ref{l14} that $G$ is Lindel\"{o}f or $Y$ is pseudocompact.

By Lemma~\ref{l13}, we may assume that $Y$ is pseudocompact, and therefore, $Y$ is first-countable. Indeed, fix an arbitrary point $y_{0}\in Y$. Since $Y$ is a Tychonoff space with a countable pseudocharacter, it is easy to see that there exists a sequence $\eta=\{U_{n}: n\in\mathbb{N}\}$ of open neighborhoods at $y_{0}$ in $Y$ such that $\bigcap_{n\in\mathbb{N}}\overline{U_{n}}^{Y}=\{y_{0}\}$ and $\overline{U_{n+1}}^{Y}\subset U_{n}$ for each $n\in \mathbb{N}$, where each $\overline{U_{n}}^{Y}$ denotes the closure of $U_{n}$ in $Y$. Then $\eta$ is a local base at $y_{0}$ in $Y$. Suppose that $\eta$ is not a local base at $y_{0}$ in $Y$. Then there exists an open neighborhood $U$ at $y_{0}$ in $Y$ such that $U_{n}\setminus \overline{U}^{Y}\neq\emptyset$ for each $n\in \mathbb{N}$. Then $\{U_{n}\setminus\overline{U}: n\in\mathbb{N}\}$ is a decreasing sequence of non-empty open subsets of $Y$, and hence it follows from \cite[Theorem 3.10.23]{E} that $\bigcap_{n\in\mathbb{N}}\overline{U_{n}\setminus\overline{U}^{Y}}^{Y}\neq\emptyset$. However, $y_{0}\not\in\bigcap_{n\in\mathbb{N}}\overline{U_{n}\setminus\overline{U}^{Y}}^{Y}\subset\bigcap_{n\in\mathbb{N}}\overline{U_{n}}=\{y_{0}\}$, and hence $\bigcap_{n\in\mathbb{N}}\overline{U_{n}\setminus\overline{U}^{Y}}^{Y}=\emptyset$, which is a contradiction.

We assume that $Y$ is non-countably compact. It follows, by a standard argument,
that $G$ has a countable $\pi$-base at some point which is an accumulation point of some countable subset of $Y$. Therefore, $G$ is metrizable.
\end{proof}

{\bf Note.} Since Sorgenfrey line is first-countable and non-metrizable, it is suborderable space which is not a rectifiable space. Moreover, it is well known that Sorgenfrey line is not countably compact. However, there exists a Hausdorff compactification of the Sorgenfrey line such that its remainder is homeomorphic to the Sorgenfrey line, such as the two-arrows space of P.S. Alexandroff and P.S. Urysohn \cite[Exercise 3.10.C]{E}.

By the proof of Theorem~\ref{t11}, it is easy to see the following proposition holds since a rectifiable space is metrizable if it is of countable $\pi$-character \cite{G1996}.

\begin{proposition}
Let a non-locally compact GO-space $G$ be rectifiable, and $Y$ be the remainder of some Hausdorff compactification of $G$. If $Y$ is of countable $\pi$-character and non-countably compact, then $G$ is metrizable.
\end{proposition}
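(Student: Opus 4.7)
The plan is to imitate the proof of Theorem~\ref{t11}, but with countable $\pi$-character replacing countable pseudocharacter throughout. First I would invoke the dichotomy coming from Lemma~\ref{l14} (used just as in the opening of the proof of Theorem~\ref{t11}): either $G$ is Lindel\"{o}f, or $Y$ is pseudocompact. In the first case $Y$ is Lindel\"{o}f as well, so Lemma~\ref{l13} already forces $G$ to be metrizable and we are done. Hence the substantive case is that $Y$ is pseudocompact and, by hypothesis, non-countably compact.

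In that case I would run the argument that appears in the last paragraph of the proof of Theorem~\ref{t11}. Since $Y$ is not countably compact there is a countable set $\{y_{n}:n\in\mathbb{N}\}\subseteq Y$ with no accumulation point in $Y$; by compactness of $bG$ this set accumulates at some point $x\in bG$, and then necessarily $x\in G$. For each $n$, the hypothesis $\pi\chi(Y)\leq\omega$ gives a countable local $\pi$-base $\gamma_{n}=\{V_{n,k}:k\in\mathbb{N}\}$ at $y_{n}$ in $Y$; lift each $V_{n,k}$ to an open set $U_{n,k}\subseteq bG$ with $U_{n,k}\cap Y=V_{n,k}$, and consider the countable family $\{U_{n,k}\cap G:n,k\in\mathbb{N}\}$ of open subsets of $G$. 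The ``standard argument'' alluded to in the proof of Theorem~\ref{t11}, which uses that $G$ is not locally compact (so every open neighborhood of $x$ in $bG$ meets $Y$) together with the fact that $x$ is an accumulation point of $\{y_{n}\}$, then shows that (a suitable refinement of) this countable family furnishes a local $\pi$-base at $x$ in $G$.

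Once $\pi\chi(x,G)\leq\omega$ is obtained, homogeneity of rectifiable spaces promotes this to $\pi\chi(G)\leq\omega$, and the theorem of Gul'ko \cite{G1996} that every rectifiable space of countable $\pi$-character is metrizable (explicitly cited in the sentence preceding the proposition) finishes the proof: $G$ is metrizable.

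The one delicate point, and the step I would expect to dwell on, is the transfer itself: a naive open lift $U_{n,k}\subseteq bG$ with $U_{n,k}\cap Y=V_{n,k}$ may ``spill over'' in $G$ outside a given neighborhood of $x$, so that $U_{n,k}\cap G$ is not automatically contained in a preassigned open neighborhood of $x$ in $G$. I expect to handle this exactly as in Theorem~\ref{t11}, by using non-local-compactness of $G$ (equivalently, $x\in\overline{Y}^{bG}$) to shrink the lifts, intersecting $U_{n,k}$ with appropriate open neighborhoods of $x$ in $bG$ that carry $y_{n}$ to $Y$-neighborhoods where the $\pi$-base property applies. Since the remark preceding the proposition defers precisely this verification to ``the proof of Theorem~\ref{t11}'', I do not anticipate any essentially new idea beyond the bookkeeping already present there.
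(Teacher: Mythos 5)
Your proposal is correct and follows essentially the same route as the paper, which derives the proposition from the proof of Theorem~\ref{t11} (the Lemma~\ref{l14} dichotomy, Lemma~\ref{l13} disposing of the Lindel\"{o}f case, and the same ``standard argument'' extracting a countable local $\pi$-base at a point of $G$ from the non-countably-compact remainder) combined with Gul'ko's result that a rectifiable space of countable $\pi$-character is metrizable; your careful handling of the lifting/shrinking step, via density of $Y$ in $bG$ coming from nowhere local compactness, is exactly the detail the paper leaves implicit. One cosmetic remark: Lemma~\ref{l14} directly yields ``$Y$ is Lindel\"{o}f or $Y$ is pseudocompact,'' so your intermediate inference from ``$G$ is Lindel\"{o}f'' to ``$Y$ is Lindel\"{o}f'' (which is invalid in general, since a Lindel\"{o}f space need not be of countable type) is unnecessary --- the case split you actually use is the correct one, and it mirrors a slip already present in the paper's wording of Theorem~\ref{t11}.
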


Recall that the space $X$ has a {\it quasi-$G_\delta$-diagonal} provided there is a sequence $\{\mathcal{G}(n): n\in \mathbb{N}\}$ of collections of open sets with property that, given distinct points $x, y\in X$, there is some $n$ with $x\in st(x, \mathcal{G}(n))\subset X\setminus \{y\}$. If each $\mathcal{G}(n)$ covers $X$, then $X$ has a {\it $G_{\delta}$-diagonal}. Obviously, ``$X$ has a $G_\delta$-diagonal" implies ``$X$ has a quasi-$G_\delta$-diagonal".

\begin{lemma}\label{l11}\cite{LLL}
Let $G$ be a non-locally compact, paracompact rectifiable space, and
$Y=bG\setminus G$ have locally quasi-$G_\delta$-diagonal. Then $G$ and $bG$ are
separable and metrizable.
\end{lemma}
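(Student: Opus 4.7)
The plan is to exploit the local quasi-$G_{\delta}$-diagonal property of $Y$ together with paracompactness of $G$ in order to produce a compact subset of $G$ with a countable outer base, and then to use the rectifiable structure together with Lemma~\ref{l4} to conclude metrizability. First I would fix a point $y\in Y$ together with an open neighborhood $U$ of $y$ in $Y$ admitting a quasi-$G_{\delta}$-diagonal, and choose an open set $V$ in $bG$ with $V\cap Y = U$. Since $G$ is not locally compact, $Y$ is dense in $bG$, and so the compact set $K=\overline{V}^{bG}$ has nonempty intersection with $G$, while its intersection with $Y$ contains $U$ densely.

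For the heart of the proof, I would aim to show that $K$ is metrizable. It is a classical fact that a compact Hausdorff space with a $G_{\delta}$-diagonal is metrizable, and on a compact space one can hope to upgrade a quasi-$G_{\delta}$-diagonal on a dense open subset to a genuine $G_{\delta}$-diagonal on the whole space by a covering argument that uses the compactness of $K$ to saturate the witnessing open collections $\mathcal{G}(n)$. Once $K$ is second countable, paracompactness of $G$ together with the fact that $K\cap G$ is closed in $G$ forces some compact subset $F\subset G$ to admit a countable outer base of neighborhoods in $G$. By the homogeneity of $G$ coming from the rectification, such an $F$ can be translated to every point of $G$, so $G$ is of point-countable type. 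The compact subspace $F$ is then bisequential in $G$, which (again by homogeneity, applied at the right neutral element) makes $G$ bisequential, and Lemma~\ref{l4} yields that $G$ is metrizable. Separability of $G$ and $bG$ then follows: $bG$ is a compactification of a metrizable $G$ whose remainder already sits inside a second countable compactum near each of its points, from which one deduces that $bG$ has a countable network, hence is second countable.

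\textbf{The main obstacle.} The central difficulty lies in the word \emph{quasi}. For an ordinary $G_{\delta}$-diagonal on a remainder, the standard compactness machinery applies essentially directly; but a quasi-$G_{\delta}$-diagonal does not automatically upgrade to a $G_{\delta}$-diagonal, even on compact spaces. One therefore has to combine the partial diagonal information available only on $U\subset Y$ with the paracompactness of $G$ and the homogeneity afforded by the rectification in order to propagate the diagonal structure to all of $K$, i.e.\ to deal with the points of $K\setminus U$, which sit on the boundary between $G$ and $Y$. Carrying out this covering and propagation argument while keeping the cardinality of the witnessing families countable, and then translating the resulting compact-with-countable-outer-base phenomenon from a single point of $G$ to all of $G$ via the rectification, is the step that requires the most care.
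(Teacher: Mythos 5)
First, a point of reference: this paper does not prove Lemma~\ref{l11} at all --- it is quoted from \cite{LLL}, so there is no in-paper proof to compare against, and your proposal must stand on its own. It does not, for two concrete reasons. (a) Your central step --- that the compact set $K=\overline{V}^{bG}$, which contains $U$ densely, can be shown metrizable by ``saturating'' the quasi-$G_{\delta}$-diagonal witnesses of $U$ over all of $K$ --- is false as a general topological claim, and you supply no mechanism beyond the hope itself (your own ``main obstacle'' paragraph concedes this). The configuration ``compact space with a dense subspace having a quasi-$G_{\delta}$-diagonal'' carries no metrizability information: $\omega$ is dense in $\beta\omega$ and, being discrete, has a $G_{\delta}$-diagonal, yet $\beta\omega$ is nowhere metrizable. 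So the extra hypotheses must enter \emph{before} this step, not after; in the known treatment (see \cite{LLL} and \cite{A2009}) the rectifiable structure is used at the outset via the dichotomy that a remainder of a rectifiable space is Lindel\"{o}f or pseudocompact, after which one combines the Henriksen--Isbell theorem (Lemma~\ref{l12}) with the classical fact that a countably compact space with a quasi-$G_{\delta}$-diagonal is compact and metrizable; in your sketch rectifiability and paracompactness are decorative until the final appeal to Lemma~\ref{l4}, and the paracompactness hypothesis of the lemma never does any work.

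(b) Even granting metrizability of $K$, your route through point-countable type is broken: the inference ``$F$ compact with countable outer base, hence $F$ bisequential in $G$, hence $G$ bisequential'' is invalid --- point-countable type does not yield bisequentiality (any non-metrizable compact topological group, e.g.\ $D^{\omega_{1}}$, is of point-countable type but not bisequential, as bisequentiality would force metrizability by Lemma~\ref{l4}). The step is also unnecessary: if $K$ were metrizable, then $V\cap G$ would be an open subset of $G$ that is first countable (indeed of countable $\pi$-character), and homogeneity of the rectifiable $G$ plus Gul$^{\prime}$ko's theorem that a rectifiable space of countable $\pi$-character is metrizable (\cite{G1996}, invoked elsewhere in this paper) finishes immediately. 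Finally, your closing claim that $bG$ has a countable network because the remainder ``sits inside a second countable compactum near each of its points'' is asserted, not proved; the standard conclusion of separability and metrizability of $bG$ requires its own argument (e.g.\ gluing the local quasi-$G_{\delta}$-diagonals over a Lindel\"{o}f $Y$ and applying compact-metrizability results to $bG$), which is missing here.
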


\begin{lemma}\cite[Proposition 4.2]{BH1}\label{l78}
If $G$ is a GO-space which is a rectifiable space, then $G$ is hereditarily paracompact.
\end{lemma}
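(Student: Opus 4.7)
The plan is to leverage Theorem~\ref{t2}, which gives a sharp dichotomy: a rectifiable GO-space $G$ is either metrizable or a totally disconnected $P$-space. Since hereditary paracompactness is preserved under finite case splits, I would handle these two alternatives separately.

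If $G$ is metrizable, then every subspace of $G$ is metrizable, and Stone's theorem yields paracompactness of each subspace. Suppose instead that $G$ is a totally disconnected $P$-space. Both ``being a GO-space'' and ``being a $P$-space'' descend to arbitrary subspaces: the former is immediate from the definition, and the latter holds because if $H\subseteq G$ and $U_n=V_n\cap H$ are open in $H$ with $V_n$ open in $G$, then $\bigcap_n U_n=\bigl(\bigcap_n V_n\bigr)\cap H$ is open in $H$. Total disconnectedness is inherited trivially. Hence every subspace of $G$ is again a totally disconnected $P$-space GO-space, and it suffices to prove that every such space is paracompact.

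The route I would take is to show the stronger statement that such a space $H$ is \emph{ultraparacompact}: every open cover admits a pairwise disjoint clopen refinement. In a totally disconnected GO-space the clopen convex sets form a base, and in a $P$-space this base is closed under countable intersections. Given an open cover $\mathcal{U}$, I would choose for each $x\in H$ a clopen convex $W_x\subseteq U_x$ for some $U_x\in\mathcal{U}$, then well-order the points of $H$ and prune $\{W_x\}$ to a pairwise disjoint family, using the fact that two convex subsets of a LOTS that meet are nested together with the $P$-space property at limit stages.

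The main obstacle lies in making the transfinite pruning genuinely cover $H$. A priori the pruned family might accumulate to a point $y\in H$ lying in no chosen member, but the $P$-space assumption rules this out: every non-isolated point of $H$ has uncountable cofinality from both sides, so no countable transfinite sub-selection of clopen convex sets can converge to a missed point. Pushing this observation through each limit stage, and verifying that the resulting disjoint clopen family covers $H$, is the technical heart of the argument. Once ultraparacompactness is established, paracompactness of each subspace of $G$, and hence hereditary paracompactness of $G$, follows immediately.
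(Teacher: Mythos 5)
There is a genuine gap, and it is located exactly where you put the weight of the argument. Your reduction via Theorem~\ref{t2} discards rectifiability (correctly observing that GO, P-space, and total disconnectedness pass to subspaces, while rectifiability does not), and then asserts that it suffices to show every totally disconnected GO-space that is a P-space is (ultra)paracompact. That assertion is false. Consider $S=\{\alpha<\omega_{2}:\mathrm{cf}(\alpha)=\omega_{1}\}$ with the topology inherited from the ordinal space $\omega_{2}$. This is a GO-space and, as a subspace of an ordinal, totally disconnected. Every point of $S$ is isolated from above in $S$ (no ordinal strictly between $\alpha$ and $\alpha+\omega_{1}$ has cofinality $\omega_{1}$), and from below each point of $S$ is either isolated in $S$ or the supremum of an $\omega_{1}$-sequence from $S$; hence countable intersections of neighborhoods are neighborhoods and $S$ is a P-space. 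But $S$ is stationary in $\omega_{2}$, and is therefore not paracompact: given a point-finite open refinement of the cover $\{S\cap[0,\alpha]:\alpha\in S\}$, choose for each non-isolated $\alpha\in S$ a member $V_{\alpha}$ containing $(\beta_{\alpha},\alpha]\cap S$; Fodor's pressing-down lemma yields a stationary set on which $\beta_{\alpha}$ is a constant $\beta$, and then any $\gamma\in S$ above $\beta$ lies in infinitely many distinct bounded members $V_{\alpha}$, a contradiction. This also pinpoints why your limit-stage heuristic fails: the P-space property controls only countable intersections, so it offers no protection at pruning stages of cofinality $\omega_{1}$, which is precisely where $S$ defeats any disjoint clopen convex refinement. (A smaller slip: two convex sets of a LOTS that meet need not be nested --- two overlapping intervals are not --- only their union is convex.)

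For comparison: the paper offers no internal proof to match, since the lemma is imported verbatim as Proposition 4.2 of Bennett, Burke and Lutzer \cite{BH1}. The argument there cannot be recovered from the dichotomy of Theorem~\ref{t2} alone; it must use the rectification itself. The standard route is the Engelking--Lutzer characterization --- a GO-space is (hereditarily) paracompact if and only if it contains no (closed) subspace homeomorphic to a stationary subset of a regular uncountable cardinal --- combined with a pressing-down argument against the continuity of the Choban operator $q$ of Theorem~\ref{t9} at points of a putative stationary copy, which produces the needed contradiction. Since rectifiability does not pass to subspaces, any correct proof must keep the operator on the ambient space while testing arbitrary subspaces against the stationary-set criterion; your reduction to subspace-hereditary properties forfeits exactly the hypothesis that does the work, and the example $S$ shows the loss is fatal rather than technical.
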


Therefore, it follows from Lemmas~\ref{l11} and~\ref{l78} that the following theorem is obtained.

\begin{theorem}\label{t15}
Let a non-locally compact GO-space $G$ be rectifiable, and $Y$ be the remainder of some Hausdorff compactification of $G$. If $Y$ has locally a quasi-$G_{\delta}$-diagonal, then $G$, $Y$ and $bG$ are separable and metrizable.
\end{theorem}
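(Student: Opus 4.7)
The plan is to combine the two lemmas just stated (Lemma~\ref{l11} and Lemma~\ref{l78}) together with the elementary fact that subspaces of separable metrizable spaces are separable and metrizable.

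First, I would invoke Lemma~\ref{l78} to upgrade the hypothesis that $G$ is a GO-space which is rectifiable into the stronger conclusion that $G$ is (hereditarily) paracompact. This is the only role the GO-space assumption plays: it is what lets us drop into the hypotheses of Lemma~\ref{l11}. Once paracompactness is in hand, all the hypotheses of Lemma~\ref{l11} are verified, namely: $G$ is a non-locally compact, paracompact rectifiable space, and $Y = bG \setminus G$ has locally a quasi-$G_{\delta}$-diagonal.

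Next, I would apply Lemma~\ref{l11} directly to conclude that $G$ and $bG$ are separable and metrizable. Finally, since $Y$ is a subspace of $bG$, and separability and metrizability are both hereditary properties, the space $Y$ is also separable and metrizable, completing the three-part conclusion.

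The proof is essentially a one-line citation chain, so there is no real obstacle; the substantive work has been done in Lemmas~\ref{l11} and~\ref{l78}. The only thing to be careful about is to note explicitly that the non-local-compactness assumption on $G$ is needed precisely because Lemma~\ref{l11} requires it, and that the separability and metrizability of $Y$ are inherited from $bG$ rather than produced independently.
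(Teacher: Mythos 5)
Your proposal is correct and follows exactly the route the paper intends: the paper derives Theorem~\ref{t15} precisely by combining Lemma~\ref{l78} (hereditary paracompactness of rectifiable GO-spaces) with Lemma~\ref{l11}, with the separability and metrizability of $Y$ inherited as a subspace of $bG$. Your explicit remark that $Y$'s properties come from heredity rather than a separate argument is a small but accurate clarification of a step the paper leaves implicit.
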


However, the following question is still open.

\begin{question}
Let a GO-space $G$ be rectifiable, and $Y$ be the remainder of some Hausdorff compactification of $G$. If $Y$ is first-countable, is $G$ metrizable?
\end{question}

\begin{theorem}\label{t14}
Let a GO-space $G$ be rectifiable, and $Y$ be the remainder of some Hausdorff compactification of $G$. If $Y$ is Ohio-complete\footnote{We call a space $X$ is {\it Ohio complete} \cite{A} if in each
compactification $bX$ of $X$ there is a $G_{\delta}$-subset $Z$ such
that $X\subset Z$ and each point $y\in Z\setminus X$ is separated
from $X$ by a $G_{\delta}$-subset of $Z$.}, then $G$ is metrizable.
\end{theorem}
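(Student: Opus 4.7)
The plan is to argue by contradiction. Suppose $G$ is not metrizable. By Theorem~\ref{t2}, $G$ is a totally disconnected $P$-space; since a discrete space is metrizable, $G$ is non-discrete, hence not first-countable and not of countable type, and no point of $G$ is a $G_\delta$ in $G$.

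First I would rule out two quick reductions. If $Y$ were Lindel\"of, then Lemma~\ref{l13} gives metrizability of $G$, contradicting the assumption. If $G$ were locally compact then, being a locally compact $P$-space, $G$ would be discrete, again metrizable. So $G$ is not locally compact, $Y$ is not closed in $bG$, and $\overline{Y}^{bG}$ is a Hausdorff compactification of $Y$ strictly larger than $Y$. I would then apply Ohio-completeness of $Y$ to the compactification $b_1Y:=\overline{Y}^{bG}$, obtaining a $G_\delta$-subset $Z$ of $b_1Y$ with $Y\subset Z$ and, for every $y\in Z\setminus Y$, a $G_\delta$-subset $F_y$ of $Z$ with $y\in F_y$ and $F_y\cap Y=\emptyset$. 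Because $b_1Y\setminus Y\subset G$, each such $F_y$ is contained in $G$.

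In the main case $Z\setminus Y\neq\emptyset$, I would trace the $G_\delta$-definitions of $Z$ and $F_y$ back to countable intersections of open sets of $bG$ and intersect with $G$; the $P$-space property of $G$ then converts $F_y\cap G=F_y$ into an open neighborhood of $y$ in $G$, and also yields countable pseudocharacter of $y$ in $bG$, hence in $G$. By homogeneity of the rectifiable space $G$, every point of $G$ is then a $G_\delta$ in $G$, and combined with the $P$-space property this forces $G$ to be discrete --- a contradiction.

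The main obstacle is the degenerate case $Z=Y$, in which $Y$ is itself a $G_\delta$ in $b_1Y$ and no point of $G$ is directly distinguished by the Ohio-completeness data. Here I would exploit non-Lindel\"ofness of $Y$, hereditary paracompactness of $G$ (Lemma~\ref{l78}), and the interaction between the rectifiable operations $p,q$ and the compactification to produce a countable $\pi$-base at some point of $G$; the characterization cited in the note following Theorem~\ref{t11}, that a rectifiable space of countable $\pi$-character is metrizable, then delivers the final contradiction. Pinning down this $\pi$-base construction --- possibly through an auxiliary lemma analogous to Lemma~\ref{l14} adapted to the rectifiable, non-Lindel\"of setting --- is the step most likely to require real work.
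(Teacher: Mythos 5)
Your reduction to a non-metrizable, totally disconnected $P$-space (Theorem~\ref{t2}) and the passage to the compactification $\overline{Y}^{bG}$ of $Y$ are legitimate (homogeneity makes the non-locally-compact $G$ nowhere locally compact, so in fact $\overline{Y}^{bG}=bG$), but both halves of your case analysis have real problems. In the main case, the assertion that the separating set $F_y$ ``yields countable pseudocharacter of $y$ in $bG$'' is a non sequitur: Ohio-completeness separates $y$ from $Y$, not from $G\setminus\{y\}$, so $F_y$ is merely a $G_\delta$-subset of $bG$ contained in $G$ and may contain a large piece of $G$; nothing you wrote exhibits $\{y\}$ as a $G_\delta$. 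The step is repairable, but only by an argument you do not give: writing $F_y=\bigcap_n V_n$ with $V_n$ open in the compact Hausdorff space $bG$, one can shrink to open sets $O_n$ with $y\in O_{n+1}$, $\overline{O_{n+1}}\subset O_n\cap V_n$, so that $K=\bigcap_n \overline{O_n}$ is a compact $G_\delta$-subset of $bG$ with $y\in K\subset G$; since compact subsets of the $T_1$ $P$-space $G$ are finite, $K$ is finite, so $\{y\}$ really is a $G_\delta$-point of the compactum $bG$, hence of countable character there, and then homogeneity together with Lemma~\ref{l4} (or with Corollary~\ref{c0}, via point-countable type) gives metrizability of $G$, the desired contradiction. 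Without extracting $K$, your conclusion ``every point of $G$ is a $G_\delta$ in $G$'' is unsupported.

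The second, larger gap is the case $Z=Y$, which you leave open with a vague plan about constructing a countable $\pi$-base and explicitly flag as ``the step most likely to require real work.'' In fact this case is immediate and needs no such machinery: $Z=Y$ means $Y$ is a $G_\delta$-subset of $bG$, so $G=bG\setminus Y$ is $\sigma$-compact; compact subsets of the $P$-space $G$ are finite, so $G$ is countable, and a countable $T_1$ $P$-space is discrete, hence metrizable --- contradiction. The two cases you are separating are exactly the dichotomy that the paper's own one-line opening move cites from \cite{A}: if the remainder is Ohio-complete, then $G$ is $\sigma$-compact or of countable type, after which the paper finishes via Theorem~\ref{t10} and the countable-type argument of Lemma~\ref{l13}/Theorem~\ref{t11}. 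So your route --- unpacking Ohio-completeness by hand rather than quoting \cite{A} --- is viable in principle, but as written one case rests on a false inference and the other is not proved, precisely at the two points where the actual content lies.
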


\begin{proof}
Since $Y$ is Ohio-complete, the space $G$ is $\sigma$-compact or $G$ is of countable type.

Case 1: There exists some countably compact subset $F$ of $G$ that is infinite.

Then $G$ is metrizable by Theorem~\ref{t10}.

Case 2: No countably compact subset of $G$ is infinite.

If $G$ is $\sigma$-compact, $G$ is finite, and hence $G$ is discrete; if $G$ is of countable type, then
$G$ is metrizable by the proof of Theorem~\ref{t11}.
\end{proof}

In \cite{A}, we know that each $p$-space is Ohio-complete, and therefore, we have the following corollary. The $p$-spaces are a class of generalized metric spaces \cite{A4}. It is
well-known that every metrizable space is a $p$-space, and every
$p$-space is of countable type.

\begin{corollary}\label{c1}
Let a GO-space $G$ be rectifiable, and $Y$ be the remainder of some Hausdorff compactification of $G$. If $Y$ is a $p$-space, then $G$ is separable and metrizable.
\end{corollary}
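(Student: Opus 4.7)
The plan is to reduce Corollary~\ref{c1} to two observations about $p$-spaces that are already recalled in the paragraph preceding the statement: every $p$-space is Ohio-complete, and every $p$-space is of countable type. Each observation feeds into a different lemma from the excerpt, and the trick is to exploit the same compactification $bG$ simultaneously as a compactification of $G$ and of $Y$.

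First, I would extract the metrizability half of the conclusion by invoking Ohio-completeness. Since $Y$ is a $p$-space, $Y$ is Ohio-complete, and Theorem~\ref{t14} applies verbatim to give that $G$ is metrizable. This is the easy half and requires no new idea beyond what was done for Theorem~\ref{t14}.

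Second, I would harvest separability by reversing the roles of $G$ and $Y$. Assuming $G$ is not locally compact (in line with the overarching Statement of Section~4), $Y$ is dense in $bG$, so $bG$ is also a Hausdorff compactification of $Y$, and the complement of $Y$ in $bG$, namely $G$ itself, is the remainder of $Y$ with respect to this compactification. Because $Y$ is a $p$-space, $Y$ is of countable type, so the Henriksen--Isbell criterion (Lemma~\ref{l12}), applied now to $Y$, forces the remainder of $Y$ in $bG$ to be Lindel\"of; that remainder is $G$. Since a metrizable space is separable if and only if it is Lindel\"of, $G$ is separable and metrizable, completing the argument.

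The routine step is the final combination of metrizability with Lindel\"ofness. The main obstacle I anticipate is justifying the non-local-compactness reduction: if $G$ were locally compact, $Y$ would be compact, and a compact $p$-space is metrizable, but $G$ need not be separable (an uncountable discrete rectifiable GO-space together with its one-point compactification has a one-point, hence trivially $p$-space, remainder). This indicates that Corollary~\ref{c1}, like Theorem~\ref{t15}, tacitly presumes $G$ is non-locally compact, and a fully rigorous write-up should either state that hypothesis explicitly or append a short separate argument using the orderability/structure results of Section~3 (Theorems~\ref{t1},~\ref{t2}) to rule out the locally compact non-separable case.
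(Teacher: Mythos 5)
Your proposal is correct and takes essentially the same route as the paper's own (one-line) proof: metrizability of $G$ comes from Theorem~\ref{t14} via the fact that $p$-spaces are Ohio-complete, and Lindel\"ofness (hence separability, given metrizability) comes from Lemma~\ref{l12} applied with the roles reversed, since $Y$ is of countable type and $G$ is the remainder of $Y$ in $bG$. Your further observation that this role reversal needs $Y$ to be dense in $bG$---i.e., that $G$ must be nowhere locally compact, a hypothesis the corollary omits and without which separability can fail (e.g., an uncountable discrete rectifiable GO-space with its one-point compactification)---is a correct catch that the paper's ``it is easy to see'' glosses over.
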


\begin{proof}
It is easy to see by Lemma~\ref{l12} and Theorem~\ref{t14}.
\end{proof}

Next, we consider the remainders with locally a point-countable $k$-network\footnote{ Let $\mathscr{P}$ be a family of subsets of
a space $X$. The family $\mathscr{P}$ is called a {\it $k$-network} \cite{PO} if
whenever $K$ is a compact subset of $X$ and $K\subset U\in \tau
(X)$, there is a finite subfamily $\mathscr{P}^{\prime}\subset
\mathscr{P}$ such that $K\subset \cup\mathscr{P}^{\prime}\subset U$.} of GO-spaces which are rectifiable. First, we give some technical lemmas.

\begin{lemma}\cite{A20091}\label{l18}
Suppose that $B=X\cup Y$, where $B$ is a compact space, and $X, Y$ are dense nowhere compact subspaces of $B$. Suppose also that each compact subset of $Y$ is contained in a compact $G_{\delta}$-subset of $Y$. Then each locally finite (in $X$) family of non-empty open subsets of $X$ is countable.
\end{lemma}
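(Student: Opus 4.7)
My plan is to argue by contradiction: I will suppose that $\gamma = \{U_\alpha : \alpha < \omega_1\}$ is an uncountable locally finite family of non-empty open subsets of $X$ and derive a contradiction from the hypotheses. For each $\alpha$ choose an open set $V_\alpha$ in $B$ with $V_\alpha \cap X = U_\alpha$, so each $V_\alpha$ is nonempty. Let $N$ be the set of points of $B$ at which $\{V_\alpha : \alpha < \omega_1\}$ fails to be locally finite; this set is closed in $B$, hence compact. The first step is to observe that $N \subseteq Y$: if $x \in N \cap X$, every $B$-neighborhood $W$ of $x$ meets infinitely many $V_\alpha$, and the density of $X$ in $B$ promotes each such $W \cap V_\alpha$ to a nonempty intersection with $X$, so $W \cap X$ is a neighborhood of $x$ in $X$ meeting infinitely many $U_\alpha$, contradicting local finiteness of $\gamma$. (If $N$ is empty the family $\{V_\alpha\}$ is already locally finite on the compact space $B$, hence finite, which immediately contradicts uncountability.)

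Next, I will apply the hypothesis on $Y$ to enclose $N$ in a compact $G_{\delta}$-subset $K$ of $Y$, writing $K = \bigcap_n (G_n \cap Y)$ with each $G_n$ open in $B$. What I expect to be the main obstacle is that $K$ is only $G_{\delta}$ inside $Y$, whereas the final counting argument will need a $G_{\delta}$ inside $B$. I bridge this gap using the normality of the compact Hausdorff space $B$: shrink each $G_n$ to a closed (hence compact) set $F_n$ with $K \subseteq \mathrm{int}_B(F_n) \subseteq F_n \subseteq G_n$, and set $F = \bigcap_n F_n$. Then $F$ is a compact $G_{\delta}$-subset of $B$ containing $N$, and $F \cap Y = K$, because $F \cap Y \subseteq (\bigcap_n G_n) \cap Y = K \subseteq F$.

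I will then use the nowhere compactness of $Y$ to show that $F$ has empty interior in $B$: any nonempty $B$-open set $U \subseteq F$ would meet $Y$ by density, and $U \cap Y$ would be a nonempty open subset of $Y$ contained in the compact set $K$, whose closure in $Y$ would then be a compact neighborhood in $Y$, contradicting the hypothesis on $Y$. Consequently $V_\alpha \setminus F \neq \emptyset$ for every $\alpha$, and $\{V_\alpha \setminus F : \alpha < \omega_1\}$ is an $\omega_1$-sized family of nonempty open subsets of $B \setminus F$ that is locally finite in $B \setminus F$ (since $N \subseteq F$ and $\{V_\alpha\}$ is locally finite on $B \setminus N$). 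Finally, $F$ being $G_{\delta}$ in $B$ makes $B \setminus F$ an $F_{\sigma}$-subset of the compact space $B$, therefore $\sigma$-compact and Lindel\"{o}f; but a Lindel\"{o}f space admits no uncountable locally finite family of nonempty sets, which is the desired contradiction.
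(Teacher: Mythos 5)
This lemma is cited in the paper from Arhangel'ski\v{\i}'s \emph{A study of remainders of topological groups} \cite{A20091} with no proof reproduced, so your argument can only be judged on its own terms; its outline --- extend the trace family to open sets $V_\alpha$ of $B$, locate the set $N$ of points of non-local-finiteness inside $Y$, trap $N$ in a compact $G_\delta$-subset $K$ of $Y$, inflate $K$ to a $G_\delta$-subset of $B$ with empty interior, and count via Lindel\"of\-ness of the $F_\sigma$ complement --- is the natural one and essentially that of the cited source. Most steps check out: $N$ is closed hence compact; $N\cap X=\emptyset$ by density of $X$ (and $N=\emptyset$ forces the family to be finite on the compactum $B$); $F\cap Y=K$; the empty-interior argument from density and nowhere compactness of $Y$ is correct (the closure in $Y$ of $U\cap Y\subseteq K$ would be a compact neighborhood in $Y$); $\{V_\alpha\setminus F\}$ is locally finite in $B\setminus F$ because $N\subseteq F$; and a Lindel\"of space indeed admits no uncountable locally finite family of nonempty sets.

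However, there is one genuine flaw at exactly the point you identified as the main obstacle: as constructed, $F=\bigcap_n F_n$ need not be a $G_\delta$-subset of $B$. Choosing each closed $F_n$ with $K\subseteq\operatorname{int}_B(F_n)\subseteq F_n\subseteq G_n$ \emph{independently for each $n$} only makes $F$ closed (a countable intersection of closed sets), and a closed subset of a compact space is in general not $G_\delta$; without $G_\delta$-ness your final step collapses, since $B\setminus F$ is then merely open, and open subspaces of compacta need not be Lindel\"of. Two routine repairs are available: either choose the $F_n$ recursively so that $F_{n+1}\subseteq G_{n+1}\cap\operatorname{int}_B(F_n)$, which gives $\bigcap_n F_n=\bigcap_n\operatorname{int}_B(F_n)$, a genuinely compact $G_\delta$-set; or simply replace $F$ by $F'=\bigcap_n\operatorname{int}_B(F_n)$ --- this is a $G_\delta$-set containing $K\supseteq N$ with $F'\cap Y=K$ (as $F'\subseteq\bigcap_n G_n$), the empty-interior and local-finiteness arguments go through verbatim (compactness of $F$ is never actually used), and $B\setminus F'=\bigcup_n\bigl(B\setminus\operatorname{int}_B(F_n)\bigr)$ is visibly a countable union of compact sets, hence Lindel\"of. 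With either patch your proof is complete.
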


\begin{lemma}\label{l15}
Let $G$ be a GO-space which is rectifiable. Then $G$ is
Lindel$\ddot{\mbox{o}}$f if and only if there exists a
compactification $bG$ such that for any compact subset $F\subset
Y=bG\setminus G$ is contained in a $G_{\delta}$-subset of $Y$.
\end{lemma}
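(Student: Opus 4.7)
The plan is to prove each direction separately, the nontrivial direction hinging on Lemma~\ref{l18} combined with paracompactness.

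For necessity, I would assume $G$ is Lindel\"of and take any Hausdorff compactification $bG$ together with any compact $F\subset Y=bG\setminus G$. Since $F\cap G=\emptyset$ and $bG$ is compact Hausdorff (hence normal), for every $x\in G$ I can pick an open $V_x\subset bG$ with $x\in V_x$ and $\overline{V_x}^{bG}\cap F=\emptyset$. The traces $\{V_x\cap G:x\in G\}$ cover $G$, so Lindel\"ofness produces a countable subcover $\{V_{x_n}\cap G\}_{n\in\mathbb{N}}$, forcing $G\subset\bigcup_n V_{x_n}$ in $bG$. Then $F\subset\bigcap_n\bigl(Y\setminus\overline{V_{x_n}}^{bG}\bigr)$, which is a $G_\delta$-subset of $Y$ containing $F$, as required.

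For sufficiency, suppose a compactification $bG$ with the stated property exists. By Lemma~\ref{l78} the space $G$ is hereditarily paracompact, and by homogeneity of rectifiable spaces $G$ is either everywhere locally compact or nowhere locally compact. In the nowhere locally compact case, both $G$ and $Y$ are dense and nowhere compact subspaces of $bG$, so (after the compactness upgrade described below) Lemma~\ref{l18} applied with $X=G$ yields that every locally finite family of non-empty open subsets of $G$ is countable. Paracompactness then promotes this to Lindel\"ofness by the usual argument: any open cover of $G$ admits a locally finite open refinement, which under the present dichotomy must in fact be countable, yielding a countable subcover.

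The main obstacle I foresee is bridging the gap between the given hypothesis (``$F$ is contained in a $G_\delta$-subset of $Y$'') and the form required by Lemma~\ref{l18} (``$F$ is contained in a \emph{compact} $G_\delta$-subset of $Y$''). Writing the given $G_\delta$-set as $H=\bigcap_n(W_n\cap Y)$ with the $W_n$ open in $bG$, I would use normality of $bG$ to shrink each $W_n$ to an open $W_n'$ with $F\subset W_n'\subset\overline{W_n'}^{bG}\subset W_n$, and then arrange, leveraging that $F$ is compact in $bG$ and disjoint from $G$, that $\bigcap_n\overline{W_n'}^{bG}$ lies in $Y$; that intersection is then the compact $G_\delta$-subset of $Y$ containing $F$ needed to apply Lemma~\ref{l18}.
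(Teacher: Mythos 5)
The step you yourself flagged as the main obstacle --- the ``compactness upgrade'' --- is a genuine gap, and in fact no argument can supply it. Your shrunken intersection $\bigcap_n\overline{W_n'}^{bG}$ is compact and lies in $\bigcap_n W_n$, but the hypothesis only controls the trace $\bigcap_n W_n\cap Y$; in the relevant (nowhere locally compact) case $G$ is \emph{dense} in $bG$, so every nonempty open $W_n'$ meets $G$, and countably many shrinkings cannot force the intersection of the closures off a dense, typically uncountable, $G$. Worse, the hypothesis as you read it is vacuous: $Y$ is itself a $G_\delta$-subset of $Y$, so \emph{every} compact $F\subset Y$ is contained in a $G_\delta$-subset of $Y$, for every compactification of every space. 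Hence if your upgrade worked, your argument would show that every nowhere locally compact rectifiable GO-space is Lindel\"of, which is false: the topological sum of $\omega_1$ copies of $\mathbb{Q}$ (lexicographic $\omega_1\times\mathbb{Q}$, a topological group as the product of $\mathbb{Q}$ with an uncountable discrete group) is an orderable, nowhere locally compact rectifiable space that is not Lindel\"of. The lemma is only true, and is only ever used (Theorem~\ref{t12}, on subcountable type), under the reading that the paper's own proof tacitly adopts in its Case~2: every compact $F\subset Y$ is contained in a \emph{compact} $G_\delta$-subset of $Y$. Under that reading your necessity half also proves too little: $\bigcap_n\bigl(Y\setminus\overline{V_{x_n}}^{bG}\bigr)$ is a $G_\delta$ in $Y$ but need not be compact, while $bG\setminus\bigcup_n V_{x_n}$ is compact but need not be $G_\delta$. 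The paper gets necessity in one line from Henriksen--Isbell (Lemma~\ref{l12}): $G$ Lindel\"of makes $Y$ of countable type, hence of subcountable type.

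Two further points of comparison. First, modulo the corrected hypothesis, your sufficiency skeleton is genuinely different from the paper's and, where it applies, more direct: you feed the compact $G_\delta$-sets straight into Lemma~\ref{l18} and use hereditary paracompactness (Lemma~\ref{l78}) to convert ``every locally finite family of nonempty open sets is countable'' into Lindel\"ofness. The paper instead first invokes the dichotomy of Lemma~\ref{l14} ($Y$ Lindel\"of or pseudocompact); in the Lindel\"of case it reaches paracompactness through metrizability (Lemma~\ref{l13}) and then applies Lemma~\ref{l18} as you do, while in the pseudocompact case it avoids Lemma~\ref{l18} entirely, noting that a compact $G_\delta$-set in a pseudocompact space has a countable neighborhood base, so that $Y$ is of countable type and $G$ is Lindel\"of by Lemma~\ref{l12}. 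Second, you set up the locally compact/nowhere locally compact dichotomy and then silently dropped the locally compact half, which cannot be repaired: for an uncountable discrete group (a rectifiable GO-space) with its one-point compactification, $Y$ is a singleton, hence a compact $G_\delta$-subset of itself, yet $G$ is not Lindel\"of. That case must simply be excluded, as it is in Theorem~\ref{t12} where $G$ is assumed non-locally compact; to be fair, the paper's proof has the same unstated assumption, since both its use of Lemma~\ref{l18} and its application of Lemma~\ref{l12} to $Y$ with remainder $G$ require $G$ and $Y$ to be dense in $bG$.
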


\begin{proof}
If $G$ is Lindel$\ddot{\mbox{o}}$f, then $Y$ is of countable type by
Lemma~\ref{l12}. Therefore, we only need to show the
sufficiency. It follows from Lemma~\ref{l14} that $Y$ is
pseudocompact or Lindel$\ddot{\mbox{o}}$f.

Case 1: The space $Y$ is Lindel$\ddot{\mbox{o}}$f.

It follows from Lemma~\ref{l13} that $G$ is metrizable, and
hence $G$ is a paracompact space. Therefore, the space $G$ is
Lindel$\ddot{\mbox{o}}$f by Lemma~\ref{l18}.

Case 2: The space $Y$ is pseudocompact.

For each compact subset $F$ of $Y$, there exists a compact subset $L$ of $Y$ such that $F\subset L$ and $L$ is a $G_{\delta}$-set in $Y$. Since $Y$ is
pseudocompact, it is well known that every compact subset $L$ has a countably open neighborhood base, and hence $Y$ is of
countable type. Therefore $G$ is Lindel$\ddot{\mbox{o}}$f by Lemma~\ref{l12}.
\end{proof}

\begin{theorem}\label{t12}
Let $G$ be a non-locally compact GO-space which is rectifiable, and $Y=bG\setminus G$ a remainder of $G$.
Then the following conditions are equivalent:
\begin{enumerate}
\item the space $Y$ is of subcountable type\footnote{Recall that a space $X$ is of {\it subcountable type} \cite{A20091} if every
compact subspace $F$ of $X$ is contained in a compact $G_{\delta}$
subspace $K$ of $X$.};

\item the space $Y$ is of countable type.
\end{enumerate}
\end{theorem}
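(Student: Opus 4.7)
The implication $(2)\Rightarrow(1)$ is immediate: if a compact $K\subset Y$ has a countable outer base $\{U_n:n\in\mathbb{N}\}$ of open neighborhoods in $Y$, then $K=\bigcap_{n}U_n$ is itself a compact $G_\delta$-subset of $Y$ containing $K$.

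The work is in $(1)\Rightarrow(2)$. My plan is to run the chain
\[
Y\text{ is of subcountable type}\ \Longrightarrow\ G\text{ is Lindel\"{o}f}\ \Longrightarrow\ Y\text{ is of countable type},
\]
using Lemma~\ref{l15} for the first arrow and Lemma~\ref{l12} for the second. Assuming $(1)$, every compact subset $F\subset Y$ is contained in a compact $G_\delta$-subset $K$ of $Y$; in particular $K$ is a $G_\delta$-subset of $Y$ containing $F$. So the fixed compactification $bG$ witnesses the hypothesis of the ``if'' direction of Lemma~\ref{l15}, and we obtain that $G$ is Lindel\"{o}f.

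Next, I would check that $bG$ qualifies as a Hausdorff compactification of $Y$, i.e.\ that $Y$ is dense in $bG$. This is the one place the non-local-compactness of $G$ is used: if some non-empty open $V\subset bG$ missed $Y$, then $V\subset G$, and shrinking $V$ by regularity of $bG$ would give a compact $bG$-neighborhood of some point of $G$ lying entirely in $G$. That would make $G$ locally compact at that point, and by homogeneity of the rectifiable space $G$ (which Lemma~\ref{t9} and the openness of $p,q$ give us), $G$ would be locally compact, contradicting the hypothesis. Hence $Y$ is dense in $bG$, and the remainder of $Y$ in this compactification is precisely $bG\setminus Y=G$, which we just saw is Lindel\"{o}f. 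Since $Y$ is Tychonoff as a subspace of the compact Hausdorff space $bG$, the Henriksen--Isbell criterion (Lemma~\ref{l12}), applied to $Y$, now yields that $Y$ is of countable type, which is $(2)$.

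The main obstacle is essentially bookkeeping rather than a serious difficulty: one must keep straight which space plays the role of ``$X$'' and which of ``remainder'' in each lemma, and notice that in the second step it is $Y$ (not $G$) whose compactification is $bG$ and whose remainder happens to equal $G$. Once that swap of roles is made explicit, the argument is just a two-step application of the quoted lemmas plus the density observation above.
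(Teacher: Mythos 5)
Your proof is correct and takes essentially the same route as the paper, whose entire proof of Theorem~\ref{t12} is the citation of Lemma~\ref{l15} and Lemma~\ref{l12}: you run exactly that chain, deducing that $G$ is Lindel\"{o}f from the sufficiency direction of Lemma~\ref{l15} and then applying Henriksen--Isbell to $Y$ with $bG$ as its compactification and $G$ as its remainder. The details you add --- the elementary verification of $(2)\Rightarrow(1)$ and the observation that non-local compactness plus homogeneity of the rectifiable space $G$ makes $Y$ dense in $bG$ --- are precisely the steps the paper leaves implicit, and they are carried out correctly.
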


\begin{proof}
It is easy to see by Lemma~\ref{l15} and Lemma~\ref{l12}.
\end{proof}

\begin{theorem}\label{t13}
Let $G$ be a non-locally compact GO-space which is rectifiable, and $Y=bG\setminus G$ a remainder of $G$.
If $Y$ is $\kappa$-perfect\footnote{Recall that a space $X$ is of
{\it $\kappa$-perfect} \cite{A20091} if every compact subspace $F$ is
a $G_{\delta}$ subspace of $X$.}, then $Y$ is first countable.
\end{theorem}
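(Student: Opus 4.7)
The plan is to reduce the problem to showing that every point $y \in Y$ has a countable local base, using two ingredients: the countable outer base of some compact superset of $y$, and a countable $G_{\delta}$ decomposition of $\{y\}$.

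First, I would show that $Y$ is of countable type. Since $Y$ is $\kappa$-perfect, every compact $F \subset Y$ is a $G_{\delta}$-subspace of $Y$; taking $K = F$ exhibits $F$ as a compact $G_{\delta}$ containing itself, so $Y$ is of subcountable type. By Theorem~\ref{t12} (which applies because $G$ is a non-locally compact GO-space which is rectifiable), $Y$ is then of countable type. Fix any $y \in Y$. By countable type there exists a compact $K \subset Y$ with $y \in K$ having a countable decreasing outer base $\{V_n : n \in \mathbb{N}\}$ of open neighborhoods in $Y$. By $\kappa$-perfectness applied to the compact set $\{y\}$, there is a sequence of open sets $\{W_n : n \in \mathbb{N}\}$ in $Y$ with $\bigcap_n W_n = \{y\}$; using regularity of $Y$ I may shrink these so that $\overline{W_{n+1}} \subset W_n$ and $\bigcap_n \overline{W_n} = \{y\}$.

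Next I would show that $\mathscr{B} = \{V_n \cap W_m : n, m \in \mathbb{N}\}$ is a local base at $y$. Let $U$ be any open neighborhood of $y$ in $Y$. For each $z \in K \setminus U$, since $z \neq y$ and $\bigcap_n \overline{W_n} = \{y\}$, there is some $n$ with $z \notin \overline{W_n}$. Compactness of $K \setminus U$ yields a single $n_0$ with $K \setminus U \subset Y \setminus \overline{W_{n_0}}$, whence $K \subset U \cup (Y \setminus \overline{W_{n_0}})$. This last set is open and contains $K$, so by the outer-base property there is some $n$ with $V_n \subset U \cup (Y \setminus \overline{W_{n_0}})$. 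Then
\[
V_n \cap W_{n_0} \;\subset\; V_n \cap \overline{W_{n_0}} \;\subset\; U \cap \overline{W_{n_0}} \;\subset\; U,
\]
which produces an element of $\mathscr{B}$ inside $U$. Hence $\mathscr{B}$ is a countable local base at $y$, and $Y$ is first-countable.

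The two places where one must be careful are (i) correctly invoking Theorem~\ref{t12}, which requires $G$ to be a non-locally compact GO-space that is rectifiable (as hypothesized), and (ii) the shrinking to obtain $\overline{W_{n+1}} \subset W_n$ with $\bigcap_n \overline{W_n} = \{y\}$, which is where regularity of $Y$ (all spaces Tychonoff in this section) enters. The core compactness argument in the second paragraph is routine once $Y$ is known to be of countable type, so the main obstacle is really the first step, which has already been done in Theorem~\ref{t12}.
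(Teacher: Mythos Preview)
Your proof is correct and follows the same approach as the paper: both invoke Theorem~\ref{t12} to pass from $\kappa$-perfect (hence subcountable type) to countable type, and then use that every point is a $G_{\delta}$ to conclude first countability. The paper's proof simply asserts the last implication in one line, whereas you spell out the standard argument that a $G_{\delta}$-point in a space of countable type has a countable local base; this added detail is fine and the compactness argument you give is correct.
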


\begin{proof}
It follows from Theorem~\ref{t12} that $Y$ is of countable type.
Since every point of $Y$ is a $G_{\delta}$-point, the space $Y$ is first
countable.
\end{proof}

Let $\mathcal{A}$ be a collection of subsets of $X$. The collection $\mathcal{A}$ is a {\it
$p$-metabase} \cite{BD} for $X$ if for distinct points $x, y\in X$, there
exists an $\mathcal{F}\in \mathcal{A}^{<\omega}$ such that $x\in
(\cup\mathcal{F})^{\circ}\subset\cup\mathcal{F}\subset X-\{y\}$.

The following lemma is an easy exercise.

\begin{lemma} \label{l19}
Suppose that $X$ is a Lindel\"{o}f space with locally a
point-countable $p$-metabase. Then $X$ has a point-countable
$p$-metabase.
\end{lemma}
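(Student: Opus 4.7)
The plan is to use the standard ``Lindel\"{o}f plus locally $\mathcal{P}$ implies globally $\mathcal{P}$'' argument. By hypothesis, each $x\in X$ has an open neighborhood $V_x$ which, viewed as a subspace, carries a point-countable $p$-metabase $\mathcal{A}_x$. Since $X$ is Lindel\"{o}f, the open cover $\{V_x:x\in X\}$ admits a countable subcover $\{V_n:n\in\mathbb{N}\}$; let $\mathcal{A}_n$ be the point-countable $p$-metabase that $V_n$ carries. I would then set
\[
\mathcal{A}=\{V_n:n\in\mathbb{N}\}\cup\bigcup_{n\in\mathbb{N}}\mathcal{A}_n
\]
and verify that $\mathcal{A}$ is a point-countable $p$-metabase for $X$.

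Point-countability is straightforward: for any $x\in X$, the collection $\{V_n:x\in V_n\}$ is countable (the whole family $\{V_n\}$ is), and for each such $n$ the point-countable family $\mathcal{A}_n$ contributes only countably many members containing $x$; for $n$ with $x\notin V_n$, no element of $\mathcal{A}_n\subset\mathcal{P}(V_n)$ contains $x$. Summing countably many countable contributions gives the claim.

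For the $p$-metabase property, fix distinct $x,y\in X$ and pick some $V_n$ with $x\in V_n$. If $y\in V_n$, use that $\mathcal{A}_n$ is a $p$-metabase of the subspace $V_n$ to produce a finite $\mathcal{F}\subset\mathcal{A}_n$ with $x\in(\bigcup\mathcal{F})^{\circ}_{V_n}\subset\bigcup\mathcal{F}\subset V_n\setminus\{y\}$; since $V_n$ is open in $X$ and $\bigcup\mathcal{F}\subset V_n$, the interior taken in $V_n$ coincides with the interior taken in $X$, so the same $\mathcal{F}$ works in $X$. If $y\notin V_n$, simply take $\mathcal{F}=\{V_n\}$: then $x\in V_n=V_n^{\circ}\subset V_n\subset X\setminus\{y\}$, and this is why the sets $V_n$ themselves are thrown into $\mathcal{A}$.

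I do not anticipate a real obstacle; the only subtle point is the agreement of interiors across the inclusion $V_n\hookrightarrow X$, which is immediate from $V_n$ being open, and the need to include the $V_n$'s in $\mathcal{A}$ so that the case $y\notin V_n$ can be handled without appealing to the subspace $p$-metabase at a point of $V_n$.
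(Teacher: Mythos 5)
Your proof is correct and is exactly the argument the paper intends: the paper gives no proof of this lemma, dismissing it as ``an easy exercise,'' and your countable-subcover construction --- adjoining the sets $V_n$ themselves to handle the case $y\notin V_n$, and using that interiors relative to the open subspace $V_n$ agree with interiors in $X$ --- is precisely that exercise, carried out with the right attention to the two subtle points.
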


\begin{lemma}\label{l17}\cite{LFC2009}
Suppose that $X$ has a point-countable $p$-metabase.
Then each countably compact subset of $X$ is a compact, metrizable,
$G_{\delta}$-subset of $X$.
\end{lemma}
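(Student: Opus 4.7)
First, I would observe that the trace $\mathcal{A}|_{K}=\{A\cap K:A\in\mathcal{A}\}$ is a point-countable $p$-metabase on $K$: for distinct $x,y\in K$, the finite $\mathcal{F}\subset \mathcal{A}$ given by the $p$-metabase property of $X$ satisfies $x\in (\cup\mathcal{F})^{\circ}\cap K\subset \cup\{A\cap K:A\in\mathcal{F}\}\subset K\setminus\{y\}$, so $\{A\cap K:A\in\mathcal{F}\}$ witnesses the $p$-metabase property in $K$. Thus we may reduce to the case $X=K$ is itself countably compact, and the remaining task splits into (a) showing $X$ is compact and metrizable, and (b) promoting this back up to show that $K$ is a $G_{\delta}$-subset of the original ambient space.

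For (a), I would aim for a $G_{\delta}$-diagonal on $X$ and then invoke Chaber's theorem (countably compact plus $G_{\delta}$-diagonal implies compact metrizable). For each finite $\mathcal{F}\subset \mathcal{A}$ set $U(\mathcal{F})=(\cup\mathcal{F})^{\circ}$; the family $\mathcal{U}=\{U(\mathcal{F}):\mathcal{F}\in\mathcal{A}^{<\omega}\}$ is a $T_{0}$-separating open cover of $X$ by the $p$-metabase property. Although $\mathcal{U}$ itself need not be point-countable, the underlying $\mathcal{A}$ is, and one can exploit this one level down: if $x\in U(\mathcal{F})$ then $\mathcal{F}$ must meet the countable set $\mathcal{A}_{x}=\{A\in\mathcal{A}:x\in A\}$. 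A Gruenhage-style rank argument (modelled on his proof that compact Hausdorff spaces with point-countable $T_{0}$-separating open covers are metrizable) then produces a sequence $\{\mathcal{G}_{n}\}_{n\in\mathbb{N}}$ of subfamilies of $\mathcal{U}$ covering $X$ with $\bigcap_{n}\mathrm{St}(x,\mathcal{G}_{n})=\{x\}$ for every $x\in X$, which is the desired $G_{\delta}$-diagonal.

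For (b), having concluded that $K$ is compact and metrizable, hence separable, I would fix a countable dense $D\subset K$ and put $\mathcal{A}_{D}=\bigcup_{x\in D}\mathcal{A}_{x}$, a countable subfamily of $\mathcal{A}$. The collection $\mathcal{V}=\{U(\mathcal{F}):\mathcal{F}\in\mathcal{A}_{D}^{<\omega},\ K\subset U(\mathcal{F})\}$ is a countable family of open neighborhoods of $K$ in the ambient space. Using compactness of $K$ together with the $p$-metabase property (for each $y\notin K$, cover $K$ by finitely many $U(\mathcal{F}_{i})$ missing $y$, and use density of $D$ to replace the $\mathcal{F}_{i}$'s by members of $\mathcal{A}_{D}^{<\omega}$), one shows that $\mathcal{V}$ is an outer base for $K$; in particular $K=\bigcap \mathcal{V}$ is a $G_{\delta}$-subset of $X$.

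The hard part will be step (a), specifically the extraction of the $G_{\delta}$-diagonal, because $\mathcal{U}$ fails to be point-countable in general and the combinatorics of finite unions from $\mathcal{A}$ is delicate. The argument must carefully pass from point-countability of $\mathcal{A}$ to an effective countability statement about those finite subfamilies whose union-interior contains a given point, which is exactly where a Gruenhage-type ranking/selection scheme is called for; the density/compactness argument in step (b) is more routine once step (a) is in hand.
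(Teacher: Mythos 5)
Note first that the paper gives no proof of Lemma~\ref{l17}: it is imported verbatim from \cite{LFC2009}, where it ultimately rests on the Burke--Michael theory of point-countable covers \cite{BD}. So your argument has to stand on its own, and it does not: your step~0 (the trace of $\mathcal{A}$ on $K$ is a point-countable $p$-metabase of $K$) is correct and routine, and the decomposition (compact metrizable first, then $G_{\delta}$) is the right shape, but both substantive pillars are gapped. In step (a) the entire difficulty of the lemma is concentrated in the sentence asserting that ``a Gruenhage-style rank argument then produces'' the sequence $\{\mathcal{G}_{n}\}$ witnessing a $G_{\delta}$-diagonal. You correctly diagnose the obstruction --- $\mathcal{U}=\{(\cup\mathcal{F})^{\circ}:\mathcal{F}\in\mathcal{A}^{<\omega}\}$ is not point-countable, because only one member of a witnessing family need contain $x$ (e.g.\ $x=0$ and $\mathcal{F}=\{[-1,0],(0,1]\}$ in $\mathbb{R}$: the member $(0,1]$ contains no point near $x$ from the left yet is essential, since dropping it destroys $x\in(\cup\mathcal{F})^{\circ}$) --- but you then assert, with no construction, that a ranking scheme overcomes it. That assertion \emph{is} the theorem: the passage from point-countability of $\mathcal{A}$ to a metrization conclusion for countably compact subspaces is exactly the nontrivial combinatorics (Mi\v{s}\v{c}enko-style counting of irreducible finite covers, adapted to finite unions subject to an interior condition) for which \cite{LFC2009} leans on \cite{BD}. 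Note also that the natural repair is delicate: if you shrink a witnessing family $\mathcal{F}$ with $K\subset(\cup\mathcal{F})^{\circ}$ to an irreducible subcover of $K$ (where Mi\v{s}\v{c}enko's lemma gives countability), the interior condition $K\subset(\cup\mathcal{F})^{\circ}$ may be lost, so the counting does not transfer for free.

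Step (b) contains a concrete error of the same kind. The move ``use density of $D$ to replace the $\mathcal{F}_{i}$'s by members of $\mathcal{A}_{D}^{<\omega}$'' is unjustified: members of $\mathcal{A}$ are arbitrary (not necessarily open) subsets, and a member of a witnessing family need not contain any point of $K$ at all, let alone of $D$ --- in the example above with $K=\{0\}=D$, the indispensable member $(0,1]$ lies outside $\mathcal{A}_{D}$, and density of $D$ in $K$ provides no mechanism to trade it for a member of $\mathcal{A}_{D}$. Consequently your countable family $\mathcal{V}$ need not separate $K$ from points of $X\setminus K$, and $K=\bigcap\mathcal{V}$ does not follow; the compactness argument that produces, for each $y\notin K$, a finite $\mathcal{F}$ with $K\subset(\cup\mathcal{F})^{\circ}\subset\cup\mathcal{F}\subset X\setminus\{y\}$ is fine, but making the collection of such $\mathcal{F}$'s countable is again precisely the Burke--Michael/Mi\v{s}\v{c}enko counting problem from step (a), not a consequence of separability of $K$. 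So the proposal is an accurate map of where the difficulty lives, together with two appeals to arguments that are not supplied and, as sketched, would fail; to make it a proof you must either reproduce the counting machinery of \cite{BD} or simply cite it, as \cite{LFC2009} does.
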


\begin{lemma}\cite{A} \label{l20}
If $X$ is a Lindel\"{o}f $p$-space, then any remainder of $X$
is a Lindel\"{o}f $p$-space.
\end{lemma}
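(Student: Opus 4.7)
The plan is to reduce the assertion to the case of separable metrizable target spaces via Arhangel'ski\v\i's characterization of Lindel\"{o}f $p$-spaces as perfect preimages of separable completely metrizable (Polish) spaces. Fix any compactification $bX$ of $X$ and a perfect continuous surjection $f:X\to M$ onto a Polish space $M$. By the Stone--\v{C}ech universal property, $f$ extends to a continuous map $\hat f:\beta X\to\beta M$, which restricts to a continuous surjection $\hat f:bX\to bM$, where $bM:=\hat f(bX)$ is a compactification of $M$. A classical property of perfect maps under compactification extension gives $\hat f^{-1}(M)=X$, so $\hat f$ restricts further to a continuous surjection $g:Y\to N$ with $N:=bM\setminus M$.

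I will next verify that $g$ is itself a perfect map. The fibres $g^{-1}(n)=\hat f^{-1}(n)$ are closed subsets of the compact space $bX$, hence compact. Closedness of $g$ follows from the identity $g(F)=\hat f(\overline{F}^{\,bX})\cap N$, valid for each closed $F\subset Y$ because the added points lie over $M$ and are removed by the intersection; the right side is closed in $N$ since $\hat f$ is closed between compacta.

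The crux is then to show that $N$ is itself a Lindel\"{o}f $p$-space. Lindel\"{o}fness is immediate from Lemma~\ref{l12}, because $M$ is metrizable and therefore of countable type. For the $p$-space property, I will exploit that $M$ is \v{C}ech-complete: writing $M=\bigcap_n W_n$ with $W_n\supset W_{n+1}$ open in $bM$, the complements $K_n:=bM\setminus W_n$ form a compact exhaustion of $N$, so $N$ is $\sigma$-compact in the compact Hausdorff space $bM$. From this $\sigma$-compact exhaustion, together with open sets of $bM$ whose traces on $M$ form a countable base, I plan to assemble a countable sequence of open families in $bM$ covering $N$ whose star-intersections at each $y\in N$ stay inside $N$, yielding a feathering of $N$.

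Finally, the class of Lindel\"{o}f $p$-spaces is closed under perfect preimages: perfect preimages preserve Lindel\"{o}fness, and composing $g$ with a perfect map from $N$ to a metric space (produced at the previous step) gives a perfect map from $Y$ to a metric space, so $Y$ is a $p$-space. The main obstacle is the third paragraph: $bM$ need not be first countable at the compacta $K_n$, so one cannot simply choose countable outer neighbourhood bases. The construction of the feathering of $N$ must carefully combine the $\sigma$-compact exhaustion coming from the \v{C}ech-completeness of $M$ with the countable base of $M$ lifted to open sets of $bM$, and this is the technical heart of the argument.
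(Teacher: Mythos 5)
Your proposal fails at its very first step, and the failure propagates through the whole third paragraph. A Lindel\"{o}f $p$-space admits a perfect map onto a \emph{separable metrizable} space, not in general onto a \emph{Polish} one: since \v{C}ech-completeness is preserved under perfect preimages, a perfect surjection $f:X\to M$ with $M$ Polish would force $X$ to be \v{C}ech-complete. The space $X=\mathbb{Q}$ is metrizable and separable, hence a Lindel\"{o}f $p$-space, but it is not \v{C}ech-complete, so no such $M$ exists for it. Consequently the $\sigma$-compact exhaustion of $N=bM\setminus M$ obtained from $M=\bigcap_n W_n$ is unavailable: for $M=\mathbb{Q}\subset[0,1]$ the remainder is the irrationals, which is not $\sigma$-compact. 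Moreover, even where \v{C}ech-completeness does hold, $\sigma$-compactness of $N$ alone does not yield the $p$-property (a $\sigma$-compact space need not be a $p$-space), and you yourself concede that the feathering construction --- which you correctly identify as the technical heart --- is not carried out. There is also a structural slip earlier: $\beta f:\beta X\to\beta M$ cannot be ``restricted'' to an arbitrary compactification $bX$, because $bX$ is a quotient image of $\beta X$ under the canonical map, not a subspace of it, and nothing guarantees that $\beta f$ factors through $bX$. (The paper itself gives no proof, citing Arhangel'ski\v\i's compactification paper, so your attempt must stand on its own.)

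The argument can be repaired along the lines you began, but with two substitutions. First, work in $\beta X$: for a perfect $f:X\to M$ with $M$ separable metrizable, the Henriksen--Isbell characterization of perfectness gives $(\beta f)^{-1}(M)=X$, so $\beta f$ restricts to a perfect surjection $\beta X\setminus X\to\beta M\setminus M$ --- this is the legitimate version of your second paragraph. Second, instead of hand-building a feathering of $\beta M\setminus M$, pass to a \emph{metrizable} compactification of $M$: embed $M$ in the Hilbert cube and let $cM$ be its closure there; then $cM\setminus M$ is separable metrizable, and the canonical map $\pi:\beta M\to cM$ satisfies $\pi^{-1}(M)=M$ (an ultrafilter on $M$ converging in $cM$ to a point of $M$ already converges in $M$), so $\pi$ restricts to a perfect map $\beta M\setminus M\to cM\setminus M$. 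Composing the two perfect maps exhibits $\beta X\setminus X$ as a perfect preimage of a separable metrizable space, i.e.\ a Lindel\"{o}f $p$-space. Finally, to reach an \emph{arbitrary} remainder, use the canonical map $\beta X\to bX$, which likewise restricts to a perfect surjection $\beta X\setminus X\to bX\setminus X$, and invoke Filippov's theorem that perfect images of paracompact $p$-spaces are $p$-spaces (Lindel\"{o}fness passes to continuous images). Your outline contains the first of these three moves in garbled form and neither of the other two; in particular no feathering construction from $\sigma$-compactness can replace the passage through a metrizable compactification without essentially reproving it.
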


\begin{lemma}\label{l16}\cite{LLL}
Let $G$ be a non-locally compact rectifiable space, and
$Y=bG\setminus G$ have locally a point-countable base. Then $G$ and $bG$ are
separable and metrizable.
\end{lemma}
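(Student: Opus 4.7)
My plan is to combine the strong local structure of the remainder $Y$ with the homogeneity and $\pi$-character theory of rectifiable spaces, in order to reduce the problem to the already-mentioned fact (from \cite{G1996}) that a rectifiable space of countable $\pi$-character is metrizable. The basic observation driving this is that a point-countable base is strong enough to force countable $\pi$-character on a large part of the ambient compactification, which can then be pushed into $G$.

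First, I would fix a point $y_{0}\in Y$ together with an open neighborhood $U$ of $y_{0}$ in $Y$ carrying a point-countable base $\mathcal{B}$. Let $V$ be an open set of $bG$ with $V\cap Y$ a nonempty open subset of $U$; since $G$ is not locally compact, $Y$ is dense in $bG$, and so $V\cap Y$ is dense in $V$. Because $G$ too is dense in $bG$, the set $V\cap G$ is a nonempty open subset of $G$; pick a point $x_{0}\in V\cap G$. I would then aim to build a countable $\pi$-base of $G$ at $x_{0}$ by working with traces on $G$ of sets of the form $W\cap V$, where $W$ ranges over open sets of $bG$ whose intersection with $Y$ is controlled by finitely many members of $\mathcal{B}$ at $y_{0}$. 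The point-countability of $\mathcal{B}$ at $y_{0}$ then restricts the supply of relevant candidates to a countable family, yielding $\pi\chi(x_{0},G)\le\omega$.

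Once countable $\pi$-character has been established at one point, homogeneity of the rectifiable space $G$ gives $\pi\chi(G)\le\omega$, and the theorem of Gul'ko from \cite{G1996} (quoted in the proposition just before this lemma) implies that $G$ is metrizable. To upgrade this to separable and metrizable, I would observe that the same local base on $Y$ implies $Y$ is of countable type at $y_{0}$ and, via Henriksen--Isbell (Lemma~\ref{l12}) together with Lemma~\ref{l14} and the style of reasoning used in Lemmas~\ref{l13} and~\ref{l15}, force $G$ to be Lindel\"{o}f. A Lindel\"{o}f metrizable space is separable; and if $G$ is separable and metrizable while $Y$ has a point-countable base locally, then $bG$ is a second countable compactification, hence separable and metrizable as well.

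The principal obstacle is the second step: extracting a countable $\pi$-base of $G$ at $x_{0}$ from the point-countable base of $\mathcal{B}$ on the $Y$-side. The difficulty is that open sets of $G$ near $x_{0}$ need not be determined by their traces in $Y$, so one must invoke the density of $Y$ in $bG$ carefully and exploit that the ``boundary behavior'' of an open set of $bG$ at $y_{0}$ can be recorded by finite unions from $\mathcal{B}$. Once this technical bridge is in place, the remaining arguments are routine applications of the earlier lemmas in the paper.
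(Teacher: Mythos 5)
Your plan founders at exactly the step you flag as ``the principal obstacle,'' and the obstacle is not merely technical. First, note that the finite-union machinery is unnecessary for what it can actually deliver: since $\mathcal{B}$ is a point-countable \emph{base} for $U$, the subfamily $\{B\in\mathcal{B}: y_{0}\in B\}$ is already a countable local base at $y_{0}$, so $Y$ is first countable, and (as $Y$ is dense in $bG$, $G$ being nowhere locally compact by homogeneity) $bG$ is first countable at each point of $Y$. But this cannot be converted into $\pi\chi(x_{0},G)\le\omega$ for a point $x_{0}\in V\cap G$ fixed in advance. Choose disjoint open sets $O\ni x_{0}$ and $W_{0}\ni y_{0}$ in the compact Hausdorff space $bG$: a local $\pi$-base at $x_{0}$ must contain members inside $O$, whereas every candidate in your family --- an open set whose trace on $Y$ is controlled by finitely many members of $\mathcal{B}$ \emph{containing} $y_{0}$ --- clusters at $y_{0}$ and so eventually lies inside $W_{0}$, hence never inside $O$. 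If instead you allow finite unions of \emph{arbitrary} members of $\mathcal{B}$, the family is no longer countable (point-countability bounds only the number of members through each point, not $|\mathcal{B}|$). Countability and clustering at $x_{0}$ pull in opposite directions, and no bookkeeping of ``boundary behavior'' reconciles them: first countability of the remainder at a single point (indeed at every point) does not by itself yield countable $\pi$-character at any point of $G$.

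What the argument actually requires is a \emph{countable subset of $Y$ with a cluster point in $G$}. The standard route (this is how the quoted source \cite{LLL} proceeds --- the present paper cites the lemma without reproducing a proof): first rule out that $Y$ is countably compact. If it were, then by regularity each $y\in Y$ has a closed neighborhood with a point-countable base, which is compact by Mi\v{s}\v{c}enko's theorem; thus $Y$ would be locally compact, hence open in $bG=\overline{Y}$, making $G=bG\setminus Y$ compact --- contradicting non-local compactness. So $Y$ contains an infinite closed discrete (in $Y$) set $D$, whose $bG$-closure meets $G$ by compactness, say at $x$. The union over $d\in D$ of the countable local bases $\{B\in\mathcal{B}_{d}: d\in B\}$, traced on $G$, \emph{is} a countable $\pi$-base at $x$ in $G$, because every $bG$-neighborhood of $x$ contains some $d\in D$ and hence a basic set near $d$; then homogeneity and Gul'ko's theorem \cite{G1996} give metrizability of $G$. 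Your endgame also needs repair: metrizability of $G$ gives countable type, so $Y$ is Lindel\"{o}f by Lemma~\ref{l12}, and separability of $G$ and metrizability of the \emph{given} compactification $bG$ then follow via Lemma~\ref{l18} and further work (e.g., the fact that a compactum which is the union of two metrizable subspaces is metrizable); it is not automatic that $bG$ is second countable just because $G$ is separable metrizable. Finally, beware that Lemma~\ref{l14} as stated concerns topological groups; invoking it for rectifiable $G$ requires the rectifiable analogue due to Arhangel'ski\v\i\ and Choban.
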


\begin{theorem}
Let a non-locally compact GO-space $G$ be rectifiable, and $Y=bG\setminus G$. If $Y$ has a locally point-countable $p$-metabase, then $G$ and $bG$ are separable and metrizable.
\end{theorem}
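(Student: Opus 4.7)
The strategy is to upgrade the hypothesis ``$Y$ has locally a point-countable $p$-metabase'' to ``$Y$ has locally a point-countable base,'' so that Lemma~\ref{l16} can be invoked to conclude that $G$ and $bG$ are separable and metrizable. I would first split on the topology of $Y$: applying the rectifiable-space version of Lemma~\ref{l14} exactly as in the proof of Theorem~\ref{t11}, the remainder $Y$ is either Lindel\"of or pseudocompact. Along the way, $G$ is hereditarily paracompact by Lemma~\ref{l78}.

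Before splitting, I observe that every point of $Y$ is a $G_{\delta}$-set. Given $y\in Y$, choose an open neighborhood $U$ of $y$ in $Y$ carrying a point-countable $p$-metabase; Lemma~\ref{l17} applied inside the subspace $U$ says that the countably compact set $\{y\}$ is a compact metrizable $G_{\delta}$ in $U$, and since $U$ is open in $Y$ this forces $\{y\}$ to be $G_{\delta}$ in $Y$. So $Y$ has countable pseudocharacter.

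In the Lindel\"of case, Lemma~\ref{l19} promotes the local $p$-metabase to a global point-countable $p$-metabase on $Y$, Lemma~\ref{l13} makes $G$ metrizable, and Lemma~\ref{l17} forces every compact subset of $Y$ to be metrizable and $G_{\delta}$; combined with Lemma~\ref{l12} (countable type of $G$), I would argue that $Y$ is first countable, and then use the $p$-metabase together with first countability to extract a point-countable base for $Y$. In the pseudocompact case, countable pseudocharacter and Theorem~\ref{t11} force either $G$ metrizable or $Y$ countably compact; in the countably-compact subcase, first countability of $Y$ (from $G_{\delta}$-points plus regularity and countable compactness) together with the local $p$-metabase yields a countable $p$-metabase for $Y$, so Lemma~\ref{l17} applied to $Y$ itself makes $Y$ compact and metrizable. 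Either way, $Y$ ends up with a locally point-countable base, and Lemma~\ref{l16} closes the argument.

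The main obstacle is the upgrade from ``point-countable $p$-metabase'' to ``point-countable base,'' for which the excerpt offers no off-the-shelf lemma. I expect to lean on first countability of $Y$ and the metrizability of compact subsets (from Lemma~\ref{l17}) to select, for each $y\in Y$, a countable local base from elements of the $p$-metabase. A secondary difficulty is the countably-compact subcase of (ii), where ruling out a countably compact, non-compact $Y$ requires combining the local $p$-metabase with Lemma~\ref{l17} so that $Y$, viewed as a countably compact subset of itself, becomes a compact metrizable $G_{\delta}$.
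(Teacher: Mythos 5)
Your overall architecture (split via Lemma~\ref{l14} into Lindel\"of and pseudocompact cases, then reduce to Lemma~\ref{l16}) matches the paper's, and your preliminary observation that $Y$ has countable pseudocharacter via Lemma~\ref{l17} applied to singletons is sound. But the proof has a genuine gap at exactly the point you flag: the upgrade from ``point-countable $p$-metabase'' to ``point-countable base.'' This is not something you can do by hand the way you sketch. Elements of a $p$-metabase need not be open --- only the interiors $(\cup\mathcal{F})^{\circ}$ of finite unions are --- so ``selecting a countable local base from elements of the $p$-metabase'' is not well-formed; and even taking interiors of finite unions, a fixed point can lie in $(\cup\mathcal{F})^{\circ}$ for families $\mathcal{F}$ chosen at uncountably many other points, so unioning countable local selections destroys point-countability. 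The paper does not prove this upgrade either: it invokes an off-the-shelf result, \cite[Corollary 2.1.12]{Ls3} (first countability plus a point-countable $p$-metabase yields a point-countable base), after first establishing that $Y$ is first countable via $\kappa$-perfectness (the proof of \cite[Theorem 6.21]{LF}) and Theorem~\ref{t13}. Without that citation or a proof of it, both your Lindel\"of case and the pseudocompact branch in which Theorem~\ref{t11} returns only ``$G$ is metrizable'' remain open; note in the latter branch ``$G$ metrizable'' is strictly weaker than the theorem's conclusion that $G$ \emph{and} $bG$ are separable and metrizable, and your closing claim that ``either way $Y$ ends up with a locally point-countable base'' is not established there. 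The paper's Case 1 avoids the upgrade entirely by a different route: Lemma~\ref{l13} gives $G$ metrizable, Lemma~\ref{l17} makes compacta of $Y$ into $G_{\delta}$-sets, Lemma~\ref{l18} then forces $G$ Lindel\"of hence separable metrizable, Lemma~\ref{l20} makes $Y$ a Lindel\"of $p$-space, Lemma~\ref{l19} plus \cite{GMT1} makes $Y$ separable metrizable, and Theorem~\ref{t15} finishes $bG$.

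There is also a concretely false step in your countably compact subcase: ``first countability together with the local $p$-metabase yields a countable $p$-metabase for $Y$.'' Countable compactness provides no countable subcover of the cover by the neighborhoods $U(y)$ --- the space $\omega_{1}$ is countably compact, first countable, and locally has a countable base (hence a point-countable $p$-metabase), yet admits no countable $p$-metabase, as Lemma~\ref{l17} would then make $\omega_{1}$ compact. Fortunately this subcase is repairable without the global step: by regularity choose a closed neighborhood $V\subset U(y)$; then $V$ is countably compact (closed in countably compact $Y$), so Lemma~\ref{l17} applied inside $U(y)$ makes $V$ compact and metrizable, whence $Y$ is locally second countable and Lemma~\ref{l16} applies directly. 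This local repair does not, however, rescue the pseudocompact non-countably-compact branch, where closed subsets need not be countably compact; that branch genuinely needs the $p$-metabase-to-base result from \cite{Ls3}.
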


\begin{proof}
It follows from Lemma~\ref{l14} that $Y$ is Lindel\"{o}f or pseudocompact.

Case 1: $Y$ is Lindel\"{o}f.

Then $G$ is metrizable by Lemma~\ref{l13}, and hence $G$ is paracompact. Since $G$ has a locally point-countable $p$-metabase, it follows from Lemma~\ref{l17} that each compact subset of $Y$ is a $G_{\delta}$-subset of $Y$, and hence it follows from Lemma~\ref{l18} that $G$ is Lindel\"{o}f. Therefore, $G$ is separable and metrizable, and hence  $Y$ is a Lindel\"{o}f $p$-space by Lemma~\ref{l20}. Since $Y$ has a point-countable $p$-metabase by Lemma~\ref{l19}, the space $Y$ is separable and metrizable by \cite{GMT1}. Then $G$ and $bG$ are separable and metrizable by Theorem~\ref{t15}.

Case 2. $Y$ is pseudocompact.

By the proof of \cite[Theorem 6.21]{LF}, we see that $Y$ is $\kappa$-perfect, and hence $Y$ is first-countable by Theorem~\ref{t13}. Since $Y$ has a locally point-countable $p$-metabase, $Y$ has locally a point-countable base by \cite[Corollary 2.1.12]{Ls3}. It follows from Lemma~\ref{l16} that $G$ and $bG$ are
separable and metrizable.
\end{proof}

However, the following is still open.

\begin{question}
Let be a non-locally compact rectifiable space, and $Y=bG\setminus G$. If $Y$ has a locally point-countable $p$-metabase, are $G$ and $bG$ separable and metrizable?
\end{question}

Finally, we shall discuss some related problems about the remainders of Hausdorff compactificatins of rectifiable spaces.

\begin{lemma}\label{l8}\cite{Gr1984}
If a Tychonoff countably compact space $X$ is the union of a countable family of $D$-spaces\footnote{A {\it neighborhood assignment} for a space $X$ is a function
$\varphi$ from $X$ to the topology of $X$ such that $x\in \varphi
(x)$ for each point $x\in X$. A space $X$ is a {\it
D-space} \cite{DV}, if for any neighborhood assignment $\varphi$ for
$X$ there is a closed discrete subset $D$ of $X$ such that
$X=\bigcup_{d\in D}\varphi (d)$.}, then $X$ is compact.
\end{lemma}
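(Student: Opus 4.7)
The plan is to establish compactness of $X$ by showing that every open cover admits a countable subcover; combined with the hypothesis of countable compactness, any countable subcover immediately refines to a finite one. Fix therefore an open cover $\mathcal{U}$ of $X$ and define a neighborhood assignment $\varphi\colon X\to\mathcal{U}$ by choosing, for each $x\in X$, some $\varphi(x)\in\mathcal{U}$ with $x\in\varphi(x)$. For every $n\in\mathbb{N}$, the restriction of $\varphi$ to $X_n$ is a neighborhood assignment on the $D$-space $X_n$, so there exists a closed discrete subset $D_n\subseteq X_n$ with $X_n\subseteq\bigcup\{\varphi(d):d\in D_n\}$. Setting $D=\bigcup_{n\in\mathbb{N}}D_n$, the family $\{\varphi(d):d\in D\}$ then covers $X=\bigcup_{n\in\mathbb{N}}X_n$.

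The crux of the argument is to prove that $D$ is countable, i.e., that each $D_n$ is countable. Were some $X_n$ closed in $X$, this would be immediate: $D_n$ would then be closed and discrete in the countably compact space $X$, and hence finite. In the general case one argues by contradiction. If $D_n$ were uncountable, countable compactness of $X$ would force $D_n$ to have an accumulation point $p$ in $X$. If $p\in X_n$, one obtains an immediate contradiction with $D_n$ being closed and discrete in $X_n$; if instead $p\in X_m\setminus X_n$ for some $m\neq n$, one refines using the closed discrete set $D_m$ and iterates. A transfinite recursion of length $\omega_1$, at each stage picking accumulation points alternately from the various $X_n$ and using countable compactness to trap further accumulation points, produces either a bound on the size of $D_n$ or else an uncountable descending chain of non-empty closed subsets of $X$ whose intersection is empty, contradicting countable compactness.

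Once $D$ is known to be countable, $\{\varphi(d):d\in D\}$ is a countable open subcover of $X$ drawn from $\mathcal{U}$, and countable compactness yields a finite subcover; hence $X$ is compact. The main obstacle, as indicated, is the countability of $D_n$: since $X_n$ is not assumed closed in $X$, the classical fact that closed discrete subsets of a countably compact space are finite does not apply to $D_n$ directly. Bridging this gap requires a careful recursive use of countable compactness together with the countable family structure; the Tychonoff hypothesis enters precisely at the separation steps of this recursion, where one needs to separate accumulation points lying in distinct $X_m$'s by disjoint open sets in order to carry out the transfinite construction.
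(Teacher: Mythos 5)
The paper gives no proof of this lemma at all --- it is quoted from the literature (it is the Arhangel'ski\v\i--Buzyakova addition theorem for $D$-spaces) --- so your proposal can only be judged on its own merits, and its central step is false. Your plan is: pick $D_n$ closed discrete in $X_n$ with $X_n\subset\bigcup_{d\in D_n}\varphi(d)$, independently for each $n$, and then prove that each $D_n$ is countable using only countable compactness of $X$. No such proof can exist. Take $X=[0,\omega_1]$ with the order topology (compact Hausdorff, so the hypotheses hold trivially), decomposed as $X=X_1\cup X_2$ where $X_1$ is the set of successor ordinals (discrete as a subspace, hence a $D$-space) and $X_2=X\setminus X_1$ (closed in $X$, hence compact, hence a $D$-space). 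Define $\varphi(x)=[0,x+1)$ for $x<\omega_1$ and $\varphi(\omega_1)=(0,\omega_1]$. Since $X_1$ is cofinal in $\omega_1$ and each $\varphi(d)$ with $d\in X_1$ is bounded, \emph{every} set $D_1\subset X_1$ with $X_1\subset\bigcup_{d\in D_1}\varphi(d)$ is cofinal in $\omega_1$, hence uncountable; and such sets do exist ($D_1=X_1$ is closed and discrete in $X_1$). So the per-piece $D_n$ produced by the $D$-property can be forced uncountable --- under both the ``every choice'' and the ``some choice'' reading --- even when $X$ is compact and all pieces are $D$-spaces. Your route through a countable subcover is therefore blocked at exactly the point you identify as the crux.

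The repair you sketch does not work either: the claimed contradiction, ``an uncountable descending chain of non-empty closed subsets of $X$ with empty intersection,'' is simply not in conflict with countable compactness. In $\omega_1$ itself (countably compact), the sets $F_\alpha=[\alpha,\omega_1)$ for $\alpha<\omega_1$ form a strictly decreasing $\omega_1$-chain of non-empty closed sets with $\bigcap_{\alpha<\omega_1}F_\alpha=\emptyset$; countable compactness controls only \emph{countable} decreasing chains. So the engine of your transfinite recursion produces no contradiction even if the recursion could be carried out (and as written --- ``picking accumulation points alternately,'' ``refines using $D_m$ and iterates'' --- it is a gesture rather than a construction; the role of the Tychonoff hypothesis is likewise never made precise). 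The known proof differs structurally in a way your scheme cannot imitate: the stages must interact. One chooses the closed discrete set at stage $n$ inside the closed set of points left uncovered by the earlier stages (using that the $D$-property is inherited by closed subspaces, so $X_n$ intersected with that closed leftover is again a $D$-space), so that countable compactness is applied to a \emph{countable} decreasing sequence of closed sets and to sets that are closed and discrete in $X$ itself, where they are genuinely finite. Fixing all the $D_n$ in advance, as you do, discards exactly the control that makes the theorem true.
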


\begin{lemma}\label{l9}\cite{AA}
Let $\epsilon$ be the class of all topological spaces $X$ such that the $\pi$-character of $X$ is countable at a dense set of points. Then each Tychonoff space $Y$ which is the union of a finite family of spaces belonging to $\epsilon$ also belongs to $\epsilon$.
\end{lemma}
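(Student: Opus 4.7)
The plan is to induct on the size of the family, reducing at once to the case of two pieces: $Y = A \cup B$ with $A, B \in \epsilon$. I will verify that the set $E = \{y \in Y : \pi\chi(y, Y) \leq \omega\}$ meets every non-empty open $U \subset Y$. Fix dense subsets $D_A \subset A$ and $D_B \subset B$ witnessing $A, B \in \epsilon$.

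The easy case is that $U$ meets $\mathrm{Int}_Y(A)$ (or, symmetrically, $\mathrm{Int}_Y(B)$). Put $W = U \cap \mathrm{Int}_Y(A)$: since $W$ is open in $Y$ and contained in $A$, any open subset of $A$ lying inside $W$ is automatically open in $Y$. Pick $x \in W \cap D_A$, take a countable $\pi$-base of $x$ in $A$, and intersect every member with $W$; the result is a countable $\pi$-base of $x$ in $Y$, so $x \in U \cap E$.

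In the delicate case $U$ misses both $\mathrm{Int}_Y(A)$ and $\mathrm{Int}_Y(B)$; then $U \subset \overline{B}^{\,Y} \cap \overline{A}^{\,Y}$, so $A$ and $B$ are both dense in $U$. I would select $x \in U \cap D_B$ together with a countable $\pi$-base $\{V_m\}$ of $x$ in $B$ arranged so that every $V_m \subset U \cap B$, and define $W_m = U \setminus \overline{(B \cap U) \setminus V_m}^{\,U}$. A short direct check shows each $W_m$ is open in $Y$, non-empty (it contains $V_m$), and satisfies $W_m \cap B = V_m$.

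The main obstacle is showing $\{W_m\}$ is in fact a $\pi$-base of $x$ in $Y$. Given an open $Y$-neighborhood $O \subset U$ of $x$, I would use regularity (Tychonoff implies $T_3$) to shrink to $O'$ with $x \in O' \subset \overline{O'}^{\,Y} \subset O$, pick $m$ with $V_m \subset O' \cap B$, and argue $W_m \subset O$ by contradiction. Any candidate $y \in W_m \setminus O$ satisfies $y \in U$ and $y \notin \overline{O'}^{\,Y}$, so it admits an open neighborhood $N \subset U$ disjoint from $O'$ and hence from $V_m$; for any open neighborhood $M$ of $y$ in $U$, the intersection $M \cap N$ is a non-empty open subset of $U$ disjoint from $V_m$, and density of $B$ in $U$ supplies a point of $M \cap ((B \cap U) \setminus V_m)$. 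This places $y$ in $\overline{(B \cap U) \setminus V_m}^{\,U}$, contradicting $y \in W_m$; hence $W_m \subset O$, which gives $x \in U \cap E$ and completes the induction step.
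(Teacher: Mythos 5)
The paper does not prove this lemma at all: it is quoted verbatim from Arhangel'ski\v\i\ \cite{AA} as a known result, so there is no internal proof to compare against. Your argument is correct and self-contained, and it is worth recording that it works. The reduction to two pieces is sound (the inductive hypothesis applies to $A=X_1\cup\dots\cup X_{n-1}$ because subspaces of Tychonoff spaces are Tychonoff), the dichotomy ``$U$ meets $\mathrm{Int}_Y(A)\cup\mathrm{Int}_Y(B)$'' versus ``$A$ and $B$ are both dense in $U$'' is exhaustive, and your verification that each $W_m=U\setminus\overline{(B\cap U)\setminus V_m}^{\,U}$ is open in $Y$, contains $V_m$, and lands inside the prescribed neighborhood $O$ via the regular shrinking $O'$ is complete; the density of $B$ in $U$ is exactly what makes the contradiction step close. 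Two small remarks. First, in the easy case, ``intersect every member with $W$'' can produce empty sets, which are not allowed in a $\pi$-base; either discard them or, cleaner, restrict to the members contained in $W$ (these already form a $\pi$-base at $x$ in $A$, and being open in $A$ and inside the $Y$-open set $W\subset A$ they are open in $Y$). Second, your $W_m$ construction is a hands-on reproof of a standard fact that shortens the delicate case: in a regular space, if $D$ is dense and $x\in D$, then $\pi\chi(x,D)=\pi\chi(x,X)$ --- indeed, writing $V_m=O_m\cap D$ with $O_m$ open, density gives $O_m\subset\overline{V_m}$, so after a regular shrinking $\overline{V_m}\subset\overline{O'}\subset O$ forces $O_m\subset O$. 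Applying this inside the open subspace $U$ (where $B\cap U$ is dense) yields countable $\pi$-character of $x$ in $U$, hence in $Y$, in two lines; this is essentially the route taken in \cite{AA}. Your version costs a little more writing but has the merit of exhibiting the $\pi$-base $\{W_m\}$ explicitly.
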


\begin{theorem}\label{t6}
Assume that $G$ is a non-locally compact rectifiable space, and that $Y$ is the remainder of some Hausdorff compactification of $G$. If the following conditions hold, then $G$ is metrizable.
\begin{enumerate}
\item every $\omega$-bounded\footnote{A space $X$ is said to be {\it
$\omega$-bounded} if the clourse of every countable subset of $X$ is
compact.} subspace of $Y$ is compact;

\item $Y$ is of countable $\pi$-character at a dense set of points.
\end{enumerate}
\end{theorem}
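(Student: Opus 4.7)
The strategy is to produce a single point of $G$ with countable $\pi$-character; by homogeneity of the rectifiable space $G$ this then propagates to every point, and Gul'ko's theorem (recalled just before Proposition~4.6) yields metrizability. The first move is the Lindel\"of-versus-pseudocompact dichotomy for remainders given by Lemma~\ref{l14}: either $Y$ is Lindel\"of or $Y$ is pseudocompact. Throughout I use that $Y$ is dense in $bG$, which holds because $G$ is not locally compact.

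In the Lindel\"of case, Lemma~\ref{l12} makes $G$ of countable type, so the right neutral element $e$ lies in a compact $K \subset G$ with a countable outer base $\{O_n\}$ in $G$. Using condition~(2), pick a point $y_0 \in Y$ with a countable local $\pi$-base $\{V_n\}$ at $y_0$ in $Y$; extend each $V_n$ to an open $W_n$ in $bG$ meeting $Y$ in $V_n$, noting that each $W_n \cap G$ is non-empty by density. Refining the collection $\{W_n\}$ against the outer base $\{O_n\}$ (or iterating the extension inside $K$) should produce a countable $\pi$-base at some point of $G$, completing this case.

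In the pseudocompact case, the plan is to derive a contradiction by showing that $Y$ must be compact (which would make $G$ open in $bG$, contradicting non-local compactness). Using condition~(2), fix a dense $D \subseteq Y$ of points of countable $\pi$-character and build a countable $A \subseteq D$ by the iterative witness construction underlying Lemma~\ref{l9}: at each stage, for every point already included, adjoin a point of $D$ from each element of its countable $\pi$-base. The aim is that $\overline{A}^Y$ be $\omega$-bounded; condition~(1) then upgrades it to compactness, and a transfinite exhaustion yields a dense compact subset of $Y$, which by closedness and density equals $Y$, contradicting the non-closedness of $Y$ in $bG$. Lemma~\ref{l8} is held in reserve for the case where the iteration delivers only countable compactness of $\overline{A}^Y$, to be deployed by writing $\overline{A}^Y$ as a countable union of $D$-spaces.

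The main obstacle is the pseudocompact case: verifying that the witness construction truly delivers $\omega$-boundedness of $\overline{A}^Y$ rather than merely countable compactness, and that the resulting compact pieces assemble coherently into a dense compact subset of $Y$. The delicate interplay between conditions~(1) and~(2) is that (2) produces countably many $\pi$-base witnesses at many points while (1) converts ``countably-captured'' closures into compact sets, and these must match up uniformly enough to force $Y$ itself to be compact.
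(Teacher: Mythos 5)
Your endgame is the right one, and it coincides with the paper's: produce one point of $G$ with countable $\pi$-character, propagate it by homogeneity, and finish with Gul'ko's theorem \cite{G1996} (or Lemma~\ref{l4}). But the engine you propose is the wrong dichotomy. The paper proves the $\pi$-character claim by invoking the proof of \cite[Proposition 2.4]{AA}, and that argument splits not on Lindel\"of versus pseudocompact but on where countable subsets of $Y$ accumulate. Let $D\subset Y$ be the dense set of points at which $Y$ has countable $\pi$-character; since $G$ is homogeneous and non-locally compact, $Y$ is dense in $bG$, so $D$ is dense in $bG$ and (because $\pi$-character is preserved by dense subspaces) each $d\in D$ has a countable $\pi$-base $\gamma_d$ in $bG$. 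Case (a): some countable $M\subset D$ satisfies $\overline{M}^{bG}\cap G\neq\emptyset$. Then for $x\in G\cap\overline{M}^{bG}$ the countable family $\bigcup_{m\in M}\gamma_m$ is a $\pi$-base at $x$ in $bG$ (any open $U\ni x$ contains some $m\in M$, hence some member of $\gamma_m$), and its trace on $G$ gives countable $\pi$-character at $x$. Case (b): every countable $M\subset D$ has $\overline{M}^{bG}\subset Y$. Then $Z=\bigcup\{\overline{M}^{bG}: M\subset D \mbox{ countable}\}$ is $\omega$-bounded \emph{for free} --- a countable subset of $Z$ sits inside $\overline{M}^{bG}$ for a single countable $M\subset D$, and such closures are compact automatically, being closed in the compactum $bG$. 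By hypothesis (1), $Z$ is compact; but $Z\supset D$ is dense in $bG$, forcing $Z=bG\subset Y$, a contradiction. So case (b) is impossible and the theorem follows.

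Measured against this, your proposal has two genuine gaps. First, neither half of your Lindel\"of/pseudocompact split closes (and, as a side issue, Lemma~\ref{l14} is stated for topological groups, not rectifiable spaces; the dichotomy does extend to rectifiable spaces by Arhangel'ski\v\i--Choban \cite{A2009}, but it is simply not the operative case division here). In your Lindel\"of case, countable type of $G$ plus a single point $y_0\in Y$ of countable $\pi$-character yields nothing at any point of $G$: the traces $W_n\cap G$ are countably many nonempty open sets, but there is no mechanism forcing members of this family inside arbitrarily small neighborhoods of a fixed point of $G$; ``refining against the outer base'' of a compact set does not supply one --- what is needed is countably many points of $D$ accumulating at a point of $G$, i.e.\ exactly case (a) above. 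Second, your pseudocompact case is circular: to verify that $\overline{A}^{Y}$ is $\omega$-bounded you would need closures (in $Y$) of countable sets to be compact, which is precisely what hypothesis (1) is supposed to deliver \emph{after} $\omega$-boundedness is known; pseudocompactness of $Y$ gives no such compactness, and your reserve use of Lemma~\ref{l8} is idle, since Theorem~\ref{t6} carries no hypothesis decomposing anything into countably many $D$-spaces (those hypotheses belong to Theorem~\ref{t7}). The missing idea throughout is that compactness of the relevant closures should come not from a covering property of $Y$ but from taking closures in $bG$ under the case assumption that they avoid $G$.
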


\begin{proof}
By the proof of \cite[Proposition 2.4]{AA}, we can see that $G$ is of countable $\pi$-character. Since $G$ is a rectifiable space, it follows from \cite{G1996} or Lemma~\ref{l4} that $G$ is metrizable.
\end{proof}

\begin{theorem}\label{t7}
Assume that $G$ is a non-locally compact rectifiable space, and that the remainder $bG\setminus G=\bigcup_{i=1}^{n} M_{i}$. If the following conditions hold, then $G$ is metrizable.
\begin{enumerate}
\item each $M_{i}$ is a hereditarily $D$-space;

\item each $M_{i}$ is of countable $\pi$-character at a dense set of points.
\end{enumerate}
\end{theorem}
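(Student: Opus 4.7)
The plan is to reduce Theorem~\ref{t7} to Theorem~\ref{t6} by verifying its two hypotheses for the finite-union decomposition $Y=bG\setminus G=\bigcup_{i=1}^{n}M_{i}$. In other words, I want to show that under (1) and (2), the remainder $Y$ has countable $\pi$-character on a dense set, and every $\omega$-bounded subspace of $Y$ is compact; then Theorem~\ref{t6} immediately yields that $G$ is metrizable.

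For the $\pi$-character hypothesis I would invoke Lemma~\ref{l9} directly. Condition~(2) says that each $M_{i}$ lies in the class $\epsilon$ of Tychonoff spaces having countable $\pi$-character at a dense set of points; since $Y$ is Tychonoff (remainders of Tychonoff spaces are Tychonoff) and $Y=\bigcup_{i=1}^{n}M_{i}$ is a finite union from $\epsilon$, Lemma~\ref{l9} gives $Y\in\epsilon$. This settles hypothesis~(2) of Theorem~\ref{t6}.

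For the $\omega$-bounded hypothesis I would argue as follows. Let $Z\subset Y$ be $\omega$-bounded; then $Z$ is countably compact and Tychonoff, and $Z=\bigcup_{i=1}^{n}(Z\cap M_{i})$. By condition~(1), each $M_{i}$ is hereditarily a $D$-space, so every subspace $Z\cap M_{i}$ is itself a $D$-space. Hence $Z$ is a Tychonoff countably compact space expressed as a union of finitely many (in particular countably many) $D$-spaces, and Lemma~\ref{l8} forces $Z$ to be compact. Thus hypothesis~(1) of Theorem~\ref{t6} is verified.

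Having checked both hypotheses, applying Theorem~\ref{t6} to the non-locally compact rectifiable space $G$ with remainder $Y$ concludes that $G$ is metrizable. The argument is essentially bookkeeping: the only point that could conceivably give trouble is ensuring that the $D$-space property of $Z\cap M_{i}$ as a subspace of $M_{i}$ agrees with its $D$-space property when regarded abstractly (so that Lemma~\ref{l8} applies to $Z$), but this is automatic since the $D$-property depends only on the intrinsic topology. No new ideas beyond Lemmas~\ref{l8}, \ref{l9} and Theorem~\ref{t6} seem to be needed.
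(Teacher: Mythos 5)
Your proposal is correct and follows exactly the paper's own route: the paper likewise applies Lemma~\ref{l8} to conclude that every $\omega$-bounded subspace of $Y$ is compact, Lemma~\ref{l9} to get countable $\pi$-character of $Y$ at a dense set of points, and then Theorem~\ref{t6}. Your version merely spells out the detail the paper leaves as ``it is easy to see'' (that an $\omega$-bounded $Z$ is countably compact and decomposes into the $D$-spaces $Z\cap M_{i}$), which is a faithful expansion rather than a different argument.
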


\begin{proof}
By Lemma~\ref{l8}, it is easy to see that each $\omega$-bounded subspace of the remainder $Y$ is compact. It follows from Lemma~\ref{l9} that the $\pi$-character of $Y$ is countable at a dense set of points. Therefore, $G$ is metrizable by Theorem~\ref{t6}.
\end{proof}

It is well known that any Tychonoff space with a $\sigma$-discrete network is a hereditarily $D$-space \cite{AA1}, and that any Tychonoff spaces with a point-countable base is a hereditarily $D$-space \cite{AA2}. Therefore, we have the following corollary by Theorem~\ref{t7}.

\begin{corollary}
Assume that $G$ is a non-locally compact rectifiable space, and that the remainder $bG\setminus G=\bigcup_{i=1}^{n} M_{i}$. If one of the following conditions holds, then $G$ is metrizable.
\begin{enumerate}
\item each $M_{i}$ is first-countable and has a $\sigma$-discrete network;

\item each $M_{i}$ is metrizable;

\item each $M_{i}$ has a point-countable base.
\end{enumerate}
\end{corollary}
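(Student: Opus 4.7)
The plan is to reduce each of the three conditions (1)--(3) to the hypotheses of Theorem~\ref{t7} and then quote that theorem. Recall that Theorem~\ref{t7} requires, for every piece $M_i$, (a) $M_i$ is a hereditarily $D$-space, and (b) $M_i$ is of countable $\pi$-character at a dense set of points. Since condition (b) is implied by first-countability (which in turn delivers countable character, hence countable $\pi$-character, at \emph{every} point, a fortiori on a dense set), the task reduces to checking that in each of the three cases every $M_i$ is both first-countable and a hereditarily $D$-space.

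First, I would dispatch case (1) directly from the quoted facts in the paragraph preceding the corollary: each $M_i$ is assumed first-countable, and possessing a $\sigma$-discrete network makes $M_i$ a hereditarily $D$-space by \cite{AA1}. For case (3), each $M_i$ has a point-countable base, so it is first-countable in an obvious way, and by \cite{AA2} it is a hereditarily $D$-space. The only case requiring a brief observation is (2): a metrizable space is first-countable, and by the Bing--Nagata--Smirnov theorem it admits a $\sigma$-discrete (indeed $\sigma$-locally finite) base, which is in particular a $\sigma$-discrete network, so \cite{AA1} again applies to give the hereditary $D$-space property. (Alternatively, every metrizable space has a point-countable base, and one could invoke \cite{AA2} instead.)

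With (a) and (b) verified for each $M_i$ in all three cases, Theorem~\ref{t7} applies to the decomposition $bG\setminus G=\bigcup_{i=1}^{n}M_{i}$, and we conclude that $G$ is metrizable.

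I do not anticipate a serious obstacle here; the corollary is essentially a list of sufficient conditions that each separately force the two hypotheses of Theorem~\ref{t7}. The only point worth spelling out, so as not to confuse the reader, is the appeal to Bing--Nagata--Smirnov in case (2) to produce a $\sigma$-discrete network from metrizability; the rest is a direct citation of the two results invoked in the remark preceding the corollary.
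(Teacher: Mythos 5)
Your proposal is correct and takes essentially the same route as the paper, which obtains the corollary directly from Theorem~\ref{t7} by citing the same two facts quoted just before the statement (a $\sigma$-discrete network gives a hereditarily $D$-space by \cite{AA1}, and a point-countable base does likewise by \cite{AA2}), with first-countability supplying countable $\pi$-character at (all, hence a dense set of) points. Your explicit appeal to the Bing--Nagata--Smirnov theorem in case (2) simply spells out a reduction the paper leaves implicit.
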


\begin{proposition}\label{p0}\cite{LLL}
Let $G$ be a rectifiable space with point-countable type. If $O$ is an open neighborhood of $e$, then there exists a compact $p, q$ stable subspace $H$ of countable character in $G$ satisfying $H\subset O$.
\end{proposition}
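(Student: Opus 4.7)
The plan is to construct $H$ as the intersection of a carefully chosen decreasing sequence of open neighborhoods of $e$, combining the point-countable type hypothesis (which supplies a compact $F$ with countable outer base) with the continuity of the rectifiable operations $p$ and $q$ at $(e,e)$ (which allows the intersection to be $p,q$ stable).

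First, since $G$ is of point-countable type, pick a compact subspace $F$ with $e\in F$ possessing a decreasing countable outer base $\{V_n:n\in\mathbb{N}\}$ of open neighborhoods in $G$; by regularity and the outer-base property, $F=\bigcap_{n}V_n$. Now recursively construct a decreasing sequence $\{U_n:n\in\mathbb{N}\}$ of open neighborhoods of $e$ with the following properties:
\begin{enumerate}
\item $U_1\subset O$;
\item $\overline{U_{n+1}}\subset U_n\cap V_n$ (using regularity of $G$);
\item $p(U_{n+1},U_{n+1})\subset U_n$ and $q(U_{n+1},U_{n+1})\subset U_n$ (using that $p$ and $q$ are continuous and $p(e,e)=q(e,e)=e$).
\end{enumerate}
Set $H:=\bigcap_{n\in\mathbb{N}}U_n=\bigcap_{n\in\mathbb{N}}\overline{U_n}$. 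Then $H\subset U_1\subset O$, and $H$ is closed in $G$; since $H\subset\bigcap_n V_n=F$ and $F$ is compact, $H$ is compact. For $p,q$ stability, if $x,y\in H$ then $x,y\in U_{n+1}$ for every $n$, so by (3) both $p(x,y)$ and $q(x,y)$ lie in $U_n$ for each $n$, hence in $H$.

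The remaining and most delicate point is to verify that $\{U_n:n\in\mathbb{N}\}$ is an outer base of $H$ in $G$ (so that $H$ has countable character). Given an open $W\supset H$, I would first apply the finite intersection property to the decreasing family of compact sets $\{\overline{U_n}\cap F\setminus W\}$ in $F$; its intersection equals $H\setminus W=\emptyset$, so some $\overline{U_n}\cap F\subset W$. Then $W\cup(G\setminus\overline{U_n})$ is an open neighborhood of $F$, so the outer-base property of $\{V_k\}$ gives a $k$ with $V_k\subset W\cup(G\setminus\overline{U_n})$. Taking $m=\max\{n+1,k+1\}$ yields $U_m\subset U_n\cap V_k\subset W$, as required.

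The main obstacle I anticipate is precisely this last outer-base verification: one needs to transfer information from the compact set $F$ (where finite intersection arguments apply) back to a neighborhood of $H$ in all of $G$, and this is exactly why the two nested families $\{U_n\}$ and $\{V_n\}$ must be interleaved via clause (2) of the construction. The rectifiable-space hypothesis enters only through the continuity of $p$ and $q$ at $(e,e)$ to get clause (3); the rest is a general construction for spaces of point-countable type with a continuous binary operation fixing $e$.
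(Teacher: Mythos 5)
Your proposal is correct: the recursive interleaving of the outer base $\{V_n\}$ of the compact set $F$ with neighborhoods $\{U_n\}$ satisfying $p(U_{n+1},U_{n+1})\cup q(U_{n+1},U_{n+1})\subset U_n$ and $\overline{U_{n+1}}\subset U_n\cap V_n$, followed by the finite-intersection-property argument showing $\{U_n\}$ is an outer base of $H=\bigcap_n U_n$, all checks out (including the delicate step $V_k\cap\overline{U_n}\subset W$, hence $U_m\subset U_n\cap V_k\subset W$). Note that the paper itself states this proposition without proof, citing \cite{LLL}, and your argument is essentially the standard one used there, i.e., the direct analogue for the operations $p,q$ of the classical construction of a compact subgroup of countable character in a topological group of point-countable type.
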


\begin{lemma}\label{l10}\cite{LLL}
Let $G$ be a regular rectifiable space of countable pseudocharacter. Then $G$ has a regular $G_\delta$-diagonal\footnote{A space $X$ is said to have a {\it regular $G_{\delta}$-diagonal}
if the diagonal $\Delta=\{(x, x): x\in X\}$ can be represented as
the intersection of the closures of a countable family of open
neighborhoods of $\Delta$ in $X\times X$.}.
\end{lemma}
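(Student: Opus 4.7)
The plan is to lift countable pseudocharacter at the right neutral element $e$ up to a regular $G_\delta$-representation of the diagonal by pulling back through the continuous map $q\colon G\times G\to G$ supplied by Theorem~\ref{t9}. The key algebraic fact I will use is that $q(x,y)=e$ if and only if $x=y$: from $q(x,x)=e$ one direction is immediate, and from $p(x,q(x,y))=y$ together with $p(x,e)=x$ (which the paper verifies just after Theorem~\ref{t9}) the equality $q(x,y)=e$ forces $y=p(x,e)=x$. Thus $q^{-1}(e)=\Delta$, which makes $q$ a natural tool for representing $\Delta$ as an intersection.

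First I would use countable pseudocharacter to write $\{e\}=\bigcap_{n\in\mathbb{N}} V_n$ for some open neighborhoods $V_n$ of $e$. Using regularity of $G$, I can refine this sequence inductively to a decreasing chain of open neighborhoods, still denoted $V_n$, satisfying $\overline{V_{n+1}}\subseteq V_n$ for every $n$ and $\bigcap_{n}V_n=\{e\}$; then automatically $\bigcap_{n}\overline{V_n}=\{e\}$ as well, since $\bigcap_n\overline{V_{n+1}}\subseteq\bigcap_n V_n=\{e\}$.

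Next I would set $W_n=q^{-1}(V_n)\subseteq G\times G$. Continuity of $q$ and the identity $q(x,x)=e\in V_n$ show each $W_n$ is open and contains $\Delta$. To finish, I must verify $\bigcap_{n}\overline{W_n}=\Delta$. The inclusion $\Delta\subseteq\bigcap_{n}\overline{W_n}$ is trivial. For the reverse inclusion, suppose $(x,y)\notin\Delta$, so $x\neq y$ and hence $q(x,y)\neq e$ by the remark above. Since $\bigcap_n\overline{V_n}=\{e\}$, there exists $n$ with $q(x,y)\notin\overline{V_n}$. Then $q^{-1}(G\setminus\overline{V_n})$ is an open neighborhood of $(x,y)$ in $G\times G$ which is disjoint from $W_n=q^{-1}(V_n)$, so $(x,y)\notin\overline{W_n}$. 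This proves $\Delta=\bigcap_n\overline{W_n}$, which is exactly the definition of a regular $G_\delta$-diagonal.

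The only place where any subtlety enters is checking $q(x,y)=e\iff x=y$; once that algebraic identity and the openness/continuity of $q$ are in hand, the argument is essentially a transcription of the classical topological-group proof, with $q$ playing the role of the map $(x,y)\mapsto x^{-1}y$. I do not anticipate a real obstacle, since the rectifiable structure from Theorem~\ref{t9} provides exactly the continuous operation needed to make the pullback approach work.
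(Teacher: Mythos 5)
Your argument is correct: the identity $q^{-1}(e)=\Delta$ (which you rightly justify via $y=p(x,q(x,y))$ and $p(x,e)=x$), the regularity-refined sequence with $\bigcap_{n}\overline{V_{n}}=\{e\}$, and the open pullbacks $W_{n}=q^{-1}(V_{n})\supseteq\Delta$ yield $\Delta=\bigcap_{n}\overline{W_{n}}$ exactly as required. The paper itself gives no proof of Lemma~\ref{l10}, quoting it from \cite{LLL}, and the proof there is this same translation of the classical topological-group argument with $q$ playing the role of $(x,y)\mapsto x^{-1}y$, so your approach coincides with the source's.
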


\begin{theorem}\label{t8}
Assume that $G$ is a Moscow rectifiable space, and that $Y$ is a remainder of $G$ in some compactification $bG$ of $G$. Then at least one of the following conditions holds:
\begin{enumerate}
\item the space $G$ contains a topological copy of $D^{\omega_{1}}$, where $D=\{0, 1\}$ has the discrete topology;

\item the space $G$ has a regular $G_{\delta}$-diagonal;

\item the compactum $bG$ of $G$ is the $\check{C}$ech-Stone compactification of the space $Y$, and thus $G$ is the $\check{C}$ech-Stone remainder of $Y$.
\end{enumerate}
\end{theorem}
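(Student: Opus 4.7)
The plan is to prove the theorem by contradiction: assuming that neither (1) nor (2) holds, we shall deduce (3). So suppose that $G$ contains no topological copy of $D^{\omega_{1}}$ and has no regular $G_{\delta}$-diagonal. Since $G$ is a regular rectifiable space, Lemma~\ref{l10} implies that $G$ cannot have countable pseudocharacter (otherwise it would have a regular $G_{\delta}$-diagonal, i.e.\ (2)). By the homogeneity of rectifiable spaces, the pseudocharacter is uncountable at every point; in particular $\{e\}$ is not a $G_{\delta}$-subset of $G$.

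Next I would mimic the classical argument of Arhangel'ski\v\i\ for Moscow topological groups. The Moscow hypothesis represents, for each open set $U\subset G$, the closure $\overline{U}$ as a union of $G_{\delta}$-subsets of $G$. Using the operations $p$ and $q$ supplied by Theorem~\ref{t9}, one transports these local $G_{\delta}$-subsets across $G$ by ``rectifiable translations'' in place of left translations of a group. Since by hypothesis $G$ avoids an embedded copy of $D^{\omega_{1}}$, no independent dyadic family of length $\omega_{1}$ can be produced inside $G$; this structural obstruction, combined with the Moscow property and the uncountable pseudocharacter at $e$, is designed to force every bounded continuous real-valued function on $G$ to extend continuously to $bG$, i.e., $G$ is $C^{*}$-embedded in $bG$. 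A further standard argument, using the Moscow property together with the density of $Y$ in $bG$ (for if $G$ is locally compact, so $Y$ is closed, a separate short argument coming from Proposition~\ref{p0} yields one of (1)--(3) directly), then transfers the $C^{*}$-embedding from $G$ to $Y=bG\setminus G$, yielding $bG=\beta Y$, which is conclusion (3).

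The chief obstacle is the argument sketched in the middle paragraph: faithfully porting the Moscow-group proof to the rectifiable setting. A topological group argument freely uses translations $x\mapsto ax$ and the associativity of multiplication to chain together local information; a rectifiable space offers only the non-associative operations $p$ and $q$ satisfying the identities $p(x,q(x,y))=q(x,p(x,y))=y$ and $q(x,x)=e$. The delicate task is to replace each use of translation or associativity either by a direct application of the rectification $\varphi$ (and $\varphi^{-1}$) or by one of the formal identities above. Once this rewriting is carried out, the would-be construction of a dyadic $\{0,1\}^{\omega_{1}}$-subspace (to be ruled out) and the extension of continuous functions across $bG$ go through as in the group case, and the theorem follows.
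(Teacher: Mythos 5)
Your proposal is a plan, not a proof: the middle paragraph, which you yourself identify as ``the chief obstacle,'' is exactly where all the mathematical content of the theorem lives, and you leave it unexecuted. Worse, the plan aims at the wrong target. Conclusion (3) says $bG=\beta Y$, i.e.\ that $Y$ is $C^{*}$-embedded in $bG$; your sketch instead tries to force ``$G$ is $C^{*}$-embedded in $bG$,'' which would mean $bG=\beta G$ --- a statement that is simply false in general (take $G=\mathbb{R}$, a Moscow rectifiable space, and $bG=[0,1]$: here (2) holds, so the theorem is fine, but your intermediate claim fails), and there is no ``standard argument'' transferring $C^{*}$-embedding of $G$ to $C^{*}$-embedding of $Y$. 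The group-theoretic results you allude to (Arhangel'ski\v\i's theorems on Moscow groups) concern $C$-embedding in the Dieudonn\'e completion, not $C^{*}$-embedding in an arbitrary compactification, so even a faithful port of that machinery would not yield (3). Your opening step (failure of (2) plus Lemma~\ref{l10} and homogeneity gives uncountable pseudocharacter at every point) is correct but is never used by anything that follows.

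The paper's actual argument is quite different and you should compare it. In the non-locally-compact case $G$ is nowhere locally compact (homogeneity), so $Y$ is dense in $bG$; the paper then assumes (3) fails and uses the standard criterion for $\beta$: there are completely separated closed sets $A,B\subset Y$ whose closures in $bG$ meet at a point $z\in G$. Shrinking to open $U,V\subset Y$ with disjoint closures in $Y$, extending to open $U_{1},V_{1}\subset bG$, and intersecting closures produces a nonempty compact set $F\subset G$ which equals the intersection of the closures in $G$ of the open traces $W_{1}=U_{1}\cap G$, $W_{2}=V_{1}\cap G$. The Moscow property is applied exactly here --- to the closures of these open sets --- to produce a nonempty compact $G_{\delta}$-subset $P$ of $G$. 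Then the dichotomy: if $P$ is metrizable, its points are $G_{\delta}$ in $G$, homogeneity gives countable pseudocharacter, and Lemma~\ref{l10} gives (2); if $P$ is non-metrizable, then $P$ is a dyadic compactum by Uspenskij's theorem that compact $G_{\delta}$-subsets of rectifiable spaces are dyadic \cite[Corollary 5]{V1990}, and non-metrizable dyadic compacta contain $D^{\omega_{1}}$ by \cite[3.12.12]{E}, giving (1). These three ingredients --- the separation criterion for $bG=\beta Y$, the Moscow-generated compact $G_{\delta}$-set inside $G$, and the dyadicity theorem --- are precisely what your sketch is missing; no translation-transport or dyadic-family construction is needed, which is fortunate, since the non-associativity of $p$ and $q$ is exactly why such a construction is not available. (The locally compact case is also not ``direct'' from Proposition~\ref{p0} alone: one still needs that the compact $p,q$-stable $H$ of countable character is dyadic and non-metrizable when $G$ is, before \cite[3.12.12]{E} applies.)
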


\begin{proof}
Case 1: The space $G$ is locally compact.

Since $G$ is locally compact, $G$ is of point-countable type. Then there exists a compact $p, q$ stable subspace $H$ of countable character in $G$ by Proposition~\ref{p0}.

Subcase 1.1: The space $G$ is metrizable.

Obviously, $G$ has a regular $G_{\delta}$-diagonal.

Subcase 1.2: The space $G$ is non-metrizable.

Obviously, $H$ is non-metrizable, and therefore, it follows from \cite[3.12.12]{E} that $H$ contains a topological copy of $D^{\omega_{1}}$. Thus the space $G$ contains a topological copy of $D^{\omega_{1}}$.

Case 2: The space $G$ is non-locally compact.

Obviously, $G$ is nowhere locally compact since $G$ is a rectifiable space. Then $Y$ is dense in $bG$, that is, $bG$ is a compactification of the space $Y$.

Suppose that the condition (3) dose not hold. Thus there exist closed subsets $A$ and $B$ of $Y$ and a real-valued continuous function $f$ on $Y$ such that $f(A)={0}$ and $f(B)={1}$, while some point $z\in G$ belongs to the intersection of the closures of $A$ and $B$ in $bG$. It follows from the continuity of $f$ that we can find open subsets $U$ and $V$ of $Y$ containing $A$ and $B$, respectively, such that the closures of $U$ and $V$ in $Y$ are disjoint. Then there exists open subsets $U_{1}$ and $V_{1}$ of $bG$ such that $U=U_{1}\cap Y$ and $V=V_{1}\cap Y$.
Let $F$ be the intersection of the closures of $U_{1}$ and $V_{1}$ in $bG$. Obviously, $F$ is compact. Moreover, since $Y$ is dense in $bG$, we have $U$ and $V$ are dense in $U_{1}$ and $V_{1}$, respectively. It follows that $F\subset G$. Put $W_{1}=U_{1}\cap G$ and $W_{2}=V_{1}\cap G$. Obviously, $F$ is the intersection of the closures of $W_{1}$ and $W_{2}$ in $G$. Since $G$ is a Moscow space, the subspace $F$ is the union of closed $G_{\delta}$-subsets of $G$. Then $G$ contains a non-empty compact $G_{\delta}$-subset $P$.

Subcase 2.1: The subspace $P$ is metrizable.

Then each point of $P$ is a $G_{\delta}$-point in $G$, which implies $G$ is of countable pseudocharacter. Then $G$ has a regular $G_{\delta}$-diagonal by Lemma~\ref{l10}.

Subcase 2.2: The subspace $P$ is non-metrizable.

Since $P$ is a compact $G_{\delta}$-subset, the subspace $P$ is dyadic compactum by \cite[Corollary 5]{V1990}. Since $P$ is non-metrizable, it follows from \cite[3.12.12]{E} that the subspace $P$ contains a topological copy of $D^{\omega_{1}}$.
\end{proof}

\begin{corollary}
Let $G$ be a rectifiable space and $Y=bG\setminus G$.
If the following conditions hold, then the compactum $bG$ is the $\check{C}$ech-Stone compactification of the space $Y$, and thus $G$ is the $\check{C}$ech-Stone remainder of $Y$.
\begin{enumerate}
\item the space $G$ has no regular $G_{\delta}$-diagonal;

\item the space $G$ is of countable tightness.
\end{enumerate}
\end{corollary}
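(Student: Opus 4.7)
The plan is to invoke Theorem~\ref{t8} and then eliminate two of its three alternatives. The first step is to confirm that $G$ satisfies the hypothesis of Theorem~\ref{t8}, namely that $G$ is Moscow. For topological groups it is a classical result of Arhangel'ski\v\i\ that countable tightness implies the Moscow property, and the same argument carries over to the rectifiable setting once the group operations are replaced by the continuous maps $p$ and $q$ of Theorem~\ref{t9}; I would appeal to this fact as recorded in \cite{A2008} (or in the rectifiable form from \cite{LLL}). This preliminary reduction is the only step that requires genuine care.

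Once $G$ is known to be Moscow, Theorem~\ref{t8} gives three alternatives: either $G$ contains a topological copy of $D^{\omega_{1}}$, or $G$ has a regular $G_{\delta}$-diagonal, or $bG$ is the \v{C}ech-Stone compactification of $Y$. Hypothesis (1) of the corollary directly rules out the second alternative. To rule out the first, I would invoke the classical theorem that every dyadic compactum of countable tightness is metrizable; since $D^{\omega_{1}}$ is a non-metrizable dyadic compactum, its tightness must be uncountable. A topological copy of $D^{\omega_{1}}$ inside $G$ would be compact and hence closed, so its tightness could not exceed $t(G) \le \omega$ (tightness is hereditary to closed subspaces), yielding a contradiction. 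Only the third alternative survives, which is exactly the conclusion of the corollary.

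Thus the main obstacle is confined to the implication ``countable tightness $\Rightarrow$ Moscow'' in the rectifiable setting; everything beyond that is a mechanical trichotomy argument together with the standard dyadic-compactum fact.
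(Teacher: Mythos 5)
Your proposal is correct and follows essentially the paper's own route: the paper likewise deduces that $G$ is Moscow from countable tightness (citing \cite[Theorem 5.10]{LF} rather than \cite{A2008} or \cite{LLL}), rules out a copy of $D^{\omega_{1}}$ from $t(G)\leq\omega$ via \cite[3.12.12]{E} exactly as in your dyadic-compactum argument, and lets hypothesis (1) eliminate the regular $G_{\delta}$-diagonal alternative of Theorem~\ref{t8}. The only cosmetic difference is that the paper opens by remarking that $G$ is nowhere locally compact (via \cite[Theorem 3.4]{G1996}), a step your black-box use of the trichotomy of Theorem~\ref{t8} renders unnecessary.
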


\begin{proof}
Obviously, $G$ is nowhere locally compact, since otherwise $G$ would be metrizable by \cite[Theorem 3.4]{G1996}.

Since $G$ is of countable tightness, $G$ is Moscow by \cite[Theorem 5.10]{LF}. Since the tightness of $G$ is countable, it follows from \cite[3.12.12]{E} that $G$ contains no copy of $D^{\omega_{1}}$. Therefore, $G$ is the $\check{C}$ech-Stone remainder of $Y$ by Theorem~\ref{t8}.
\end{proof}

\begin{theorem}
Let $G$ be a rectifiable space. If for each $y\in Y=bG\setminus G$
there exists an open neighborhood $U(y)$ of $y$ such that every
$\omega$-bounded subset of $U(y)$ is compact, then at least one of
the following conditions holds:
\begin{enumerate}
\item $G$ is metrizable;

\item $bG$ can be continuously mapped onto the Tychonoff cube $I^{\omega_{1}}$.
\end{enumerate}
\end{theorem}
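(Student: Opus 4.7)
The natural strategy is to imitate the structure of Theorem~\ref{t8}, treating separately the cases when $G$ is locally compact and when $G$ is nowhere locally compact.

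Suppose first that $G$ is locally compact. Then $G$ is of point-countable type, and Proposition~\ref{p0} furnishes a compact, $p,q$-stable subspace $H$ containing $e$ with countable outer character in $G$. If $H$ is metrizable, its countable local base at $e$ together with the countable outer base of $H$ yields a countable local base at $e$ in $G$; by homogeneity $G$ is then first-countable and hence metrizable by Lemma~\ref{l4}, giving alternative (1). If $H$ is not metrizable, then $H$ is a non-metrizable compact $G_{\delta}$-subset of $G$, so by \cite[Corollary 5]{V1990} it is a dyadic compactum, and \cite[3.12.12]{E} places a copy of $D^{\omega_{1}}$ inside $H \subset bG$. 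A coordinatewise Tietze extension of a continuous surjection $D^{\omega_{1}} \to I^{\omega_{1}}$ across the normal compact Hausdorff space $bG$ then produces a continuous map $bG \to I^{\omega_{1}}$ whose restriction to this copy of $D^{\omega_{1}}$ is already onto, giving alternative (2).

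Suppose next that $G$ is non-locally compact. By homogeneity of the rectifiable space $G$ this means $G$ is nowhere locally compact, so $Y$ is dense in $bG$. The plan is to invoke Theorem~\ref{t8} after showing that $G$ is a Moscow space. The local $\omega$-bounded-compact assumption on $Y$ is used to locate a non-empty compact $G_{\delta}$-subset $P$ of $G$ in the spirit of Case 2 of the proof of Theorem~\ref{t8}: for $y \in Y$, choose open sets $V, W$ in $bG$ with $\overline{W}^{bG} \subset V$ and $V \cap Y \subset U(y)$, and use the $\omega$-bounded-compact condition on $V \cap Y$ to control the trace on $Y$ of the boundary of $W$ sufficiently to make $\overline{W}^{bG} \cap G$ the desired compact $G_{\delta}$-subset $P$ of $G$. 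Once $G$ is Moscow, the three alternatives of Theorem~\ref{t8} are handled as follows: a copy of $D^{\omega_{1}}$ in $G$ yields $bG \to I^{\omega_{1}}$ by the same Tietze extension as above; a regular $G_{\delta}$-diagonal on $G$, combined with the local $\omega$-bounded hypothesis on $Y$, should push $G$ into first-countability and hence metrizability via Lemma~\ref{l4}; and the \v{C}ech--Stone alternative is either ruled out by nowhere local compactness plus the local $\omega$-bounded condition, or reduced to one of the first two.

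The main obstacle lies in the non-locally compact case, specifically in exploiting the purely local form of the $\omega$-bounded-compact hypothesis. That hypothesis is strictly weaker than the global one used in Theorem~\ref{t6}, so one cannot directly borrow the dense countable $\pi$-character conclusion of \cite[Proposition 2.4]{AA}; a more delicate cover-style argument through the neighborhoods $U(y)$ is needed to produce the compact $G_{\delta}$-subset of $G$ that feeds the dyadic-compactum machinery. A secondary subtle point is the regular $G_{\delta}$-diagonal alternative of Theorem~\ref{t8}: passing from this diagonal condition to first-countability (and hence to metrizability via Lemma~\ref{l4}) requires precisely the kind of local control on $Y$ that the hypothesis is designed to supply, and must be carried out by hand rather than by citation.
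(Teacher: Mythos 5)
Your locally compact case is sound and essentially the paper's: Proposition~\ref{p0} supplies the compact $p,q$-stable $H$ of countable character, the metrizable/non-metrizable split gives first-countability (hence metrizability via Lemma~\ref{l4}) or, via \cite[Corollary 5]{V1990} and \cite[3.12.12]{E}, a copy of $D^{\omega_{1}}$, and the coordinatewise Tietze extension over the compact Hausdorff (hence normal) space $bG$ correctly yields alternative (2); this is in fact stated more carefully than in the paper, which extends only over $G$.

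The non-locally compact case, however, contains a genuine gap, and the route you chose cannot be completed as sketched. Theorem~\ref{t8} is available only for \emph{Moscow} rectifiable spaces; the present theorem makes no such assumption, you announce the plan ``after showing that $G$ is a Moscow space'' without any argument, and nothing in the hypotheses produces one. Even granting Moscow, the trichotomy of Theorem~\ref{t8} does not collapse to the desired dichotomy: a regular $G_{\delta}$-diagonal does not yield first-countability (Lemma~\ref{l10} runs in the opposite direction, from countable pseudocharacter to a regular $G_{\delta}$-diagonal), and you have no mechanism ruling out or absorbing the \v{C}ech--Stone alternative --- you concede both points yourself, along with the vagueness of the construction of the compact $G_{\delta}$-set $P$. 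The paper avoids Theorem~\ref{t8} entirely here and argues as follows: assume (2) fails; by \v{S}apirovski\v{\i}'s theorem \cite{SB1} the set $A$ of points at which $bG$ has countable $\pi$-character is dense in $bG$. If $A\cap G\neq\emptyset$, then $G$ has countable $\pi$-character at a point, hence everywhere by homogeneity, and a rectifiable space of countable $\pi$-character is metrizable \cite{G1996}, giving (1). If instead $A\subset Y$, then for each $y\in Y$ the set $A\cap U(y)$ is dense in $U(y)$ and is $\omega$-bounded in $U(y)$ (closures of countable subsets of $A$ stay in $Y$, by the $\pi$-base argument of \cite{AA}), so by hypothesis it is compact and therefore equals $U(y)$; thus $Y$ is locally compact, contradicting that $G$ and $Y$ are both dense in $bG$ when $G$ is nowhere locally compact. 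So the $\omega$-bounded hypothesis enters only through this density-versus-compactness clash in $Y$, not through any compact $G_{\delta}$-subset of $G$: the step you identified as the ``main obstacle'' is not merely hard, it is the wrong target.
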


\begin{proof}
Case 1: The space $G$ is locally compact.

If $G$ is not metrizable, then $G$ contains a topological copy of
$D^{\omega_{1}}$ by the proof of Theorem~\ref{t8}. Since the space $G$ is normal, the space $G$ can be
continuously mapped onto the Tychonoff cube $I^{\omega_{1}}$

Case 2: The space G is not locally compact.

Obviously, both $G$ and $Y$ are dense in $bG$. Suppose that the
condition (2) doesn't hold. Then, by a theorem of
$\check{S}$apirovski\v{\i} in \cite{SB1}, the set $A$ of all points
$x\in bG$ such that the $\pi$-character of $bG$ at $x$ is countable
is dense in $bG$. Since $G$ is dense in $bG$, it can follow that the
$\pi$-character of $G$ is countable at each point of $A\cap G$.

Subcase 2(a): $A\cap G\neq\emptyset$.

Since $G$ is a rectifiable space, it follows that $G$ is first
countable \cite{G1996}, which implies that $G$ is metrizable.

Subcase 2(b): $A\cap G=\emptyset$.

Obviously, $A\subset Y$. For each $y\in Y$, there exists an open
neighborhood $U(y)$ in $Y$ such that $y\in U(y)$ and every
$\omega$-bounded subset of $U(y)$ is compact. Obviously,  $A\cap
U(y)$ is dense of $U(y)$. Also, it is easy to see that $A\cap U(y)$
is $\omega$-bounded subset for $U(y)$. Therefore, $A\cap U(y)$ is
compact. Then $A\cap U(y)=U(y)$, since $A\cap U(y)$ is dense of
$U(y)$. Hence $Y$ is locally compact, a contradiction.
\end{proof}

{\bf Acknowledgements}. First, I have to thank my Ph.D. advisor Shou Lin for his care, guidance and help in the past few years.

Moreover, I wish to thank
the reviewers for the detailed list of corrections, suggestions to the paper, and all her/his efforts
in order to improve the paper.
\bigskip


\begin{thebibliography}{99}
\bibitem{A4} A.V. Arhangel'ski\v{\i}, {\it Mappings and spaces},
Russian Math. Surveys., 21(1966), 115--162.

\bibitem{A1990} A.V. Arhangel'ski\v\i, {\it On biradial topological spaces and groups},
{\it Topology Appl.}, {\bf 36}(1990), 173--180.

\bibitem{AA2} A.V. Arhangel'ski\v\i, R.Z. Buzyakova, {\it Addition theorems and D-spaces},
{\it Commentationes Math. Univ. Car.}, {\bf 43(4)}(2002), 653--663.

\bibitem{AA1} A.V. Arhangel'ski\v\i, {\it D-spaces and covering properties},
{\it Topology Appl.}, {\bf 146-147}(2005), 437--449.

\bibitem{A} A.V. Arhangel'ski\v{\i}, {\it Remainders in compactification and
generalized metrizability properties}, {\it Topology Appl.}, 150(2005),
79--90.

\bibitem{A3} A.V. Arhangel'ski\v{\i}, {\it Two types of remainders of topological groups},
{\it Comment. Math. Univ. Carolin.}, 49(2008), 119--126.

\bibitem{A2008} A.V. Arhangel'ski\v{\i}, M. Tkachenko,
{\it Topological Groups and Related Structures}, Atlantis Press and
World Sci., 2008.

\bibitem{A20092} A.V. Arhangel'ski\v\i, {\it Topological spaces with flexible diagonal}, {\it Questions and Answers in General Topology},
{\bf 27}(2009), 83-105.

\bibitem{AA} A.V. Arhangel'ski\v\i, {\it Some aspects of topological algebra and remainders of topological groups}, {\it opology Proc.},
{\bf 33}(2009), 13-28.

\bibitem{A20091} A.V. Arhangel'skii, {\it A study of remainders of topological groups}, {\it Fund. Math.}, {\bf
203}(2009), 165--178.

\bibitem{A2009} A.V. Arhangel'ski\v\i, M.M. Choban,  {\it Remainders of rectifiable spaces}, {\it Topology Appl.},
{\bf 157}(2010), 789-799.

\bibitem{BD} D. Burke, E.A. Michael, {\it On certain point-countable covers},
Pacific J. Math., {\bf 64}(1976), 79--92.

\bibitem{BH} H. Bennett, R. Byerly, D. Lutzer, {\it Compact
$G_{\delta}$ sets}, {\it Topology Appl.}, 153(2006), 2169--2181.

\bibitem{BH1} H. Bennett, D. Burke, D. Lutzer, {\it Choban operators in generalized ordered spaces}, submitted  for publication.

\bibitem{C1992} M.M. $\check{C}$oban, {\it The structure of locally compact algebras}, {\it Serdica}, {\bf 18}(1992), 129-137.

\bibitem{DV} E.K. Douwen, W.F. Pfeffer, {\it Some properties of the Sorgenfrey line and related spaces},
{\it Pacific J. Math.}, 81(1979), 371--377.

\bibitem{E} R. Engelking, General Topology(revised and completed edition), Heldermann Verlag,
Berlin, 1989.

\bibitem{G1996} A.S. Gul$^{\prime}$ko, {\it Rectifiable spaces}, {\it Topology Appl.}, {\bf 68}(1996), 107-112.

\bibitem{Gr1984} G. Gruenhage, \textit{Generalized metric spaces}, K. Kunen, J.E. Vaughan eds.,
Handbook of Set-Theoretic Topology, North-Holland, (1984), 423-501.

\bibitem{GMT1} G. Gruenhage, E. Michael, Y. Tanaka, {\it Spaces determined by point-countable covvers},
{\it Pacific J. Math.}, 113(1984), 303--332.

\bibitem{HH} H. Herrlich, {\it Ordnungsf\"{a}higkeit zusammenh\"{a}ngender R\"{a}ume},
{\it Fund. Math.}, {\bf 57}(1965), 305--311.

\bibitem{H1958} M. Henriksen, J. Isbell, {\it Some properties of compactifications},
{\it Duke Math. J.}, {\bf 25}(1958), 83--106.

\bibitem{LC} C. Liu, M. Sakai, Y. Tanaka, {\it Orderability of topological groups and biradial spaces},
{\it Questions and Answer in General Topology}, {\bf 19}(2001), 121--124.

\bibitem{LD} D.J. Lutzer,,  {\it On generalized ordered spaces}, Dissertationes Math., Warszawa,
(1971), 1-36.

\bibitem{LFC2009} F. Lin,  {\it Local properties on the remainders of the topological groups},
Kodai Mathematical Journal, {\bf 34}(2011), 505--518.

\bibitem{LF} F. Lin, R. Shen,  {\it On rectifiable spaces and paratopological groups}, {\it Topology Appl.},
{\bf 158}(2011), 597--610.

\bibitem{LLL} F. Lin, C. Liu, S. Lin, {\it A note on rectifiable spaces}, Topology Appl., doi:10.1016/j.topol.2012.02.002.

\bibitem{Ls3} S. Lin, {\it Point-countable Covers and Sequence-covering Mappings}(in
Chinese): Science Press, Beijing, 2002.

\bibitem{NJ} J. Nagata, {\it Moderan General Topology},
North-Holland, Amsterdam, Newyork, Oxford, 1983.

\bibitem{NJ1} J. Nagata, {\it Quotient and bi-quotient spaces of $M$-spaces},
{\it Proc. Japan Acad.}, {\bf 45}(1969), 25--29.

\bibitem{PO} P. O'Meara, {\it On paracompactness in function spaces with the compact-open topology}, {\it Proc.
Amer. Math. Soc.}, {\bf 29}(1971), 183--189.

\bibitem{SB1} B. $\check{S}$apirovski\v{\i}, {\it On $\pi$-character and $\pi$-weight of compact Hausdorff spaces},
{\it Soviet Math. Doke.}, 16(1975), 999--1003.

\bibitem{TY1} Y. Tanaka, {\it On products of pseudo-open maps and some related matters},
{\it General Topology and its Applications}, {\bf 8}(1978), 157--164.

\bibitem{TY} Y. Tanaka, {\it Metrizability of GO-spaces and topological groups},
{\it Bulletin of Tokyo Gakugei University Secy. IV}, {\bf 53}(2001), 7--18.

\bibitem{V1989} V.V. Uspenskij, {\it The Mal$^{\prime}$tsev operation on countably compact spaces},
{\it Comments. Math. Univ. Carolin.}, {\bf 30}(1989), 395-402.

\bibitem{V1990} V.V. Uspenskij, {\it Topological groups and Dugundji compacta}, {\it Mat. Sb.}
{\bf 180}(1989), no. 8, 1092--1118 (Russian); English transl. in:
{\it Math. USSR-Sb.} {\bf 67}(1990), no. 2, 555--580.

\bibitem{VM} M. Venkataraman, M. Rajagopalan, T. Soundararajan, {\it Orderable
spaces},
{\it General Topology and its Applications}, {\bf 2}(1972), 1--10.

\end{thebibliography}
\end{document}